\setlist[description]{leftmargin=\parindent,itemsep=5pt}
\def\hlinewd#1{%
	\noalign{\ifnum0=`}\fi\hrule \@height #1 \futurelet
	\reserved@a\@xhline}
\theoremstyle{plain}
\newtheorem{theorem}{Theorem}[section]
\newtheorem{lemma}[theorem]{Lemma}
\newtheorem{proposition}[theorem]{Proposition}
\theoremstyle{definition}
\newtheorem{remark}[theorem]{Remark}
\newtheorem{definition}[theorem]{Definition}
\renewcommand{\leq}{\leqslant}
\renewcommand{\geq}{\geqslant}
\renewcommand{\phi}{\varphi}
\renewcommand{\rho}{\varrho}
\newcommand{\real}{\mathds{R}}
\newcommand{\comp}{\mathds{C}}
\newcommand{\nat}{\mathds{N}}
\newcommand{\integer}{\mathds{Z}}
\newcommand{\Pp}{\mathds{P}}
\newcommand{\Ee}{\mathds{E}}
\newcommand{\one}{\mathds{1}}
\newcommand{\bbjedan}{\mathds{1}}
\newcommand{\nubar}{\bar{\nu}}
\newcommand{\calU}{\mathcal{U}}
\newcommand{\calQ}{\mathcal{Q}}
\newcommand{\ducalU}{\widehat{\mathcal{U}}}   
\newcommand{\ducalQ}{\widehat{\mathcal{Q}}}   
\DeclareMathOperator{\id}{id}
\newcommand{\phat}{\widehat{p}}
\newcommand{\Phat}{\widehat{P}}
\newcommand{\hatphi}{\widehat{\phi}}
\newcommand{\duu}{\widehat{u}}
\newcommand{\duq}{\widehat{q}}
\newcommand{\dumu}{\widehat{\mu}}
\newcommand{\dum}{\widehat{m}}
\newcommand{\casymp}[1]{\stackrel{#1}{\asymp}}
\def\namedlabel#1#2{\begingroup
	#2%
	\def\@currentlabel{#2}%
	\label{#1}\endgroup
}
\numberwithin{equation}{section}
\title[Discrete-time Markov chains killed by potentials]{Heat kernels, intrinsic contractivity and ergodicity\\ of discrete-time Markov chains killed by potentials}
\author[W.~Cygan]{Wojciech Cygan}
\address[W.~Cygan]{
	Faculty of Computer Science and Mathematics, Institute of Mathematics,
	University of Wrocław, Poland}
\email{wojciech.cygan@uwr.edu.pl}
\author[K.~Kaleta]{Kamil Kaleta}
\address[K.~Kaleta]{Faculty of Pure and Applied Mathematics,
	Wrocław University of Science and Technology, Poland}
\email{kamil.kaleta@pwr.edu.pl}
\author[R.L.~Schilling]{Ren\'e L.\ Schilling}
\address[R.L.~Schilling]{TU Dresden, Fakultät Mathematik, 
	Institut für Mathematische Stochastik, 01062 Dresden, Germany}
\email{rene.schilling@tu-dresden.de}
\author[M.~\'{S}liwi\'{n}ski]{Mateusz \'{S}liwi\'{n}ski}
\address[M.~\'{S}liwi\'{n}ski]{Faculty of Pure and Applied Mathematics,
	Wrocław University of Science and Technology, Poland}
\email{mateusz.sliwinski@pwr.edu.pl}
\thanks{Research supported by National Science Centre, Poland, grant no.\ 2019/35/B/ST1/02421, the 6G-life project (BMBF 16KISK001K) and the  Dresden--Leipzig ScaDS.AI centre}
\date{}
\begin{document}

\begin{abstract}
	We study discrete-time Markov chains on countably infinite state spaces, which are perturbed by rather general confining (i.e.\ growing at infinity) potentials. Using a discrete-time analogue of the classical Feynman--Kac formula, we obtain two-sided estimates for the $n$-step heat kernels $u_n(x,y)$ of the perturbed chain. These estimates are of the form $u_n(x,y)\asymp \lambda_0^n\phi_0(x)\widehat\phi_0(y)+F_n(x,y)$, where $\phi_0$ (and $\widehat\phi_0$) are the (dual) eigenfunctions for the lowest eigenvalue $\lambda_0$; the perturbation $F_n(x,y)$ is explicitly given, and it vanishes if either $x$ or $y$ is in a bounded set. The key assumptions are that the chain is uniformly lazy and that the \enquote{direct step property} (DSP) is satisfied. This means that the chain is more likely to move from state $x$ to state $y$ in a single step rather than in two or more steps. Starting from the form of the heat kernel estimate, we define the intrinsic (or ground-state transformed) chains and we introduce time-dependent ultracontractivity notions -- asymptotic and progressive intrinsic ultracontractivity -- which we can link to the growth behaviour of the confining potential; this allows us to consider arbitrarily slow growing potentials. These new notions of ultracontractivity also lead to a characterization of uniform (quasi-)ergodicity of the perturbed and the ground-state transformed Markov chains. At the end of the paper, we give various examples that illustrate how our findings relate to existing models, e.g.\ nearest-neighbour walks on infinite graphs, subordinate processes or non-reversible Markov chains.
\end{abstract}
	
\subjclass[2020]{
\emph{Primary}: 60J10; \emph{Secondary}: 
60J45, 
05C81, 
35K08 
47D08} 
\keywords{Markov chain; hypercontractivity; ultracontractivity; intrinsic ultracontractivity; Feynman--Kac semigroup; confining potential; ground-state; invariant measure; ergodicity; quasi-ergodicity.}

\maketitle
\allowdisplaybreaks

\section{Introduction}
A classical theme in the study of Markov processes is the influence of killing -- this may be caused by the process exiting a certain set or through the presence of a confining (i.e.\ growing) potential. There is a substantial literature on questions like \enquote{heat kernel and resolvent estimates}, \enquote{behaviour of eigenfunctions}, \enquote{ergodic properties} to mention but a few. For continuous-time Markov processes the so-called \emph{intrinsic ultracontractivity} (IUC) which was introduced by Davies and Simon in \cite{Davies-Simon-JFA} became a key technique in the field.
As far as we know, a similar technique has not really been used in the discrete-time or discrete-space setting. 
Our aim is to make IUC techniques from the continuous-time world available in the discrete setting.
To do so, we investigate discrete-time Markov chains taking values in an infinite state space $X$, which are killed by rather general potentials. Using (a discrete version of) the Feynman--Kac formula we obtain an explicit expression for the $n$-step transition kernels $u_n(x,y)$ in terms of the potential $V$ and the unperturbed Markov chain.  
As it stands, the IUC property is rather restrictive (it is stronger than compactness of the transition semigroups and it is linked to fast growth of the potential) and, therefore, not really useful in the discrete setting. If we require IUC to hold only \enquote{as time moves on} -- this is called \emph{progressive IUC}, which is a relaxed version of the more familiar \emph{asymptotic IUC} -- we can treat arbitrarily slow growing potentials. 
In the continuous-time setting the weaker notion of \emph{progressive IUC} was introduced by Kaleta and Schilling in \cite{Kaleta-Schilling-JFA} and it was further discussed in \cite{KSch-ergodic}. The notion of \emph{discrete} aIUC and pIUC allows us to prove results of the following type: 
Denote by $\phi_0$ and $\widehat\phi_0(y)$ the ground states of the Feynman--Kac semigroup and its dual, and by $\lambda_0$ the corresponding eigenvalue. If one of the states $x,y$ is in a finite set, $u_n(x,y)$ is comparable to $\lambda_0^n\phi_0(x)\widehat\phi_0(y)$. If $x,y$ are both outside a finite set, we obtain a similar comparison, but with an explicit correction term, see Theorem \ref{HK:estimates}. The two-sided comparison $u_n(x,y)\asymp \lambda_0^n\phi_0(x)\widehat\phi_0(y)$ remains valid if one of the points $x,y$ is from an (explicitly given) increasing sequence of sets $A_n\uparrow X$, see Theorem \ref{th:pIUC}. Finally, pIUC ensures 
that the  intrinsic/ground-state transformed semigroup is ergodic uniformly along the exhaustion $A_n$
while the original semigroup is quasi-ergodic uniformly along the exhaustion, see Theorem~\ref{pIUC_erg}.
 For aIUC we even get equivalence of these properties uniformly on the whole space, see Theorem \ref{t:ergodic}. Our findings extend known results both from the continuous and discrete settings, since previous authors rely on IUC and aIUC only (which necessitates further assumptions on the Markov chain and/or the killing mechanism) whereas we use pIUC which comes in many situations \enquote{for free}. 

\medskip
Let us give a few more details on the basic setting. Throughout $\left\{Y_n: n \in \nat_0\right\}$ will be a discrete-time Markov chain taking values in a countably infinite state space $X$. The transition between states is governed by a one-step probability kernel (or sub-probability kernel, in which case the Markov chain $\left\{Y_n: n \in \nat_0\right\}$ is non-conservative) $P(x,y)$, $x,y \in X$. Moreover, $V$ is a positive \emph{confining} potential on $X$, i.e.\ there exists a sequence of finite sets $B_n$ such that $B_n \uparrow X$ and $\lim_{n \to \infty} \inf_{x \in B_n^c} V(x) = \infty$, see assumption \eqref{B} below. The main object of our study is the \emph{discrete Feynman–Kac semigroup} $\{\calU_n: n \in \nat_0\}$, which is defined by
\begin{align*} 
	\calU_0 f = f ,\quad  \calU_nf(x) = \Ee^x \left[\prod_{k=0}^{n-1}\frac{1}{V(Y_k)}  f(Y_n) \right], \quad n\geq 1.
\end{align*}
This semigroup describes the evolution of a Markov chain $\left\{Y_n: n \in \nat_0\right\}$ in $X$ under the influence of the external potential $V$.  The potential may satisfy $V<1$ on some finite subset of $X$, which can lead to an increase in the \enquote{mass} of the semigroup. Eventually, however, $V$ is assumed to tend to infinity, which means that the paths are randomly killed with an intensity determined by $V$. We do not impose self-adjointness; as usual, we denote all \enquote{dual} objects by a hat, e.g.\ $\widehat P(x,y)$ for the dual of the transition kernel. Duality is understood in the space $\ell^2(X,\mu)$ where $\mu:X\to (0,\infty)$ is some measure on $X$, see \eqref{cond:duality}.

This model is a discrete-time and discrete-space analogue of the classical Feynman--Kac semigroup associated with continuous-time Markov processes, see the monograph \cite{Demuth-Casteren} by Demuth \& van Casteren. To the best of our knowledge, this is a relatively new concept in the discrete-time setting. Apart from \cite{CKS-ALEA} -- this is a precursor of the present paper -- we are aware of only two further articles, which work with similar concepts:\ Anastassiou \& Bendikov \cite{Anastassiou_Bendikov} and Cs\'aki \cite{Csaki}. In contrast to our paper, these authors consider only $\integer^d$-valued simple random walks, and their focus differs entirely from ours.

\medskip
We assume that the Markov chain $\left\{Y_n: n \in \nat_0\right\}$, resp.\ its transition kernel $P(x,y)$, satisfy the so-called \emph{direct step property} (DSP). Intuitively, this means that the Markov chain prefers to jump directly from one state to another, rather than passing through intermediate states. For technical reasons, we require that the chain is \emph{uniformly lazy} (this means that the family of one-step return probabilities $P(x,x)$, $x\in X$, is bounded away from zero) and that the one-step transition operator is ultracontractive. Our main assumptions are stated as \eqref{A1}-\eqref{A3}. 

This framework covers a wide variety of Markov chains, among them random walks on discrete metric spaces, whose transition probabilities are governed by kernels satisfying certain regularity conditions, see e.g., Bass \& Levin \cite{Bass_Levin} or Murugan \& Saloff-Coste \cite{Murugan_Saloff-Coste,Murugan_Saloff_2}. 
Further examples can be constructed by discrete-time subordination (as in Bendikov \& Saloff-Coste \cite{Bendikov-Saloff}), see in particular \cite[Section 4]{CKS-ALEA} for applications on infinite graphs. Subordination allows one to construct Markov chains, which are generated by functions of fairly general graph Laplacians -- e.g.\ fractional Laplacians or relativistic Laplacians -- and their Feynman--Kac evolutions. Since the DSP requires that $P>0$, this leads to non-local chains which can jump from \emph{every} state $x$ to \emph{any other} state $y$ in one go, without noticing the geometry of the underlying graph. The only link to the geometry is through the potential $V$, whose growth behaviour creates a new \enquote{geometry}, which is expressed by a new exhausting sequence given by $V$. This becomes evident, in particular, in the significantly different distributional properties when compared to finite-range random walks. For the corresponding discrete Feynman--Kac semigroups we observe different regularity: For example, DSP induces hyper- and ultracontractivity of the intrinsic semigroups for sufficiently fast growing confining potentials, whereas these properties typically fail to hold in the nearest-neighbour setting, regardless of the growth rate of the potential, see Section \ref{sec:NNRW}.  In continuous time, the transition semigroups of finite Markov chains are known to be hypercontractive without additional conditions, see e.g.\ Diaconis \& Saloff-Coste \cite{Diaconis-Saloff}. 

Combining Dirichlet form techniques and notions from geometry/manifolds Lenz and co-authors made substantial progress in the study of Laplacians on infinite graphs, see e.g.\ \cite{keller_lenz_wojciechowski_2021} and \cite{Keller_Lenz}. Our results go in a similar direction, leading to various nonlocal models on infinite graphs, but our methods are mainly of a probabilistic nature. We analyze evolutions in discrete time, and the starting point of our considerations is a (sub-)probabilistic kernel/Markov chain.

Intrinsic ultracontractivity was introduced for continuous-time Markov processes and self-adjoint operators by Davies \& Simon \cite{Davies-Simon-JFA}, while hypercontractivity traces back to Nelson \cite{Nelson}. Both properties were first studied in the context of semigroups associated with the Laplacian and second-order differential operators, including transition semigroups of diffusions and classical Schrödinger semigroups, see e.g.\ \cite{Banuelos,Bakry-Gentil-Ledoux,Davies-mon,Davies-Gross-Simon,Metafune-Spina,Ouhabaz-Rhandi} and references therein. For non-self-adjoint operators, intrinsic ultracontractivity appeared for the first time in the papers by Kim \& Song \cite{Kim-Song-2008,Kim-Song-AoP,Kim-Song-FM}.

Non-local operators and the corresponding jump processes (L\'evy- and L\'evy-type processes) \emph{in continuous time} have attracted a lot of attention. Among many other properties, recent studies cover necessary and sufficient conditions for intrinsic contractivity properties, ground state estimates for Schr\"odinger/Feynman–Kac semigroups \cite{Kulczycki-Siudeja,Chen-Wang,Chen_Wang_1,Kaleta-Kulczycki,Kaleta-Lorinczi-AoP} and transition semigroups of processes killed upon exiting unbounded domains \cite{Kwasnicki,Chen-Kim-wang-CMP}, heat kernel and Green function estimates for such models \cite{Kaleta-Schilling-JFA,Chen-Kim-wang-MAn,Chen-Kaleta-Wang}, or applications of intrinsic ultracontractivity to the study of the ergodic properties of continuous-time Markov processes, see e.g.\ \cite{Takeda,Knobloch-Partzsch,Zhang,Kim-Tagawa,Zhang-Li-Liao}. For discrete Markov chains such properties are known only for particular examples and the proofs rely on \emph{ad hoc} methods, tailor-made for each situation. Our motivation was to come up with a general method covering a wide class of examples.

The actual starting point for our work was the paper \cite{Kaleta-Schilling-JFA} (see also \cite{KSch-ergodic} for applications), where the authors derived large-time heat-kernel estimates for a wide class of non-local Schrödinger operators. In this connection the concept of progressive intrinsic ultracontractivity was introduced as a tool to control the behaviour of general compact semigroups, including those that fail to be (asymptotically) intrinsic ultracontractive. The present paper aims to make this novel approach available also in the discrete setting.

\medskip

Our main contributions can be summarized as follows.

\subsection*{Heat kernel estimates and pIUC} Our main results address the problem of the uniform comparability of the  $n$-step transition kernel $u_n(x,y)$ of the discrete (in time and space) Markov chain perturbed by a potential with the product $\lambda_0^n \varphi_0(x) \hatphi_0(y)$ of its eigenvalues and -functions. Theorem \ref{HK:estimates} provides sharp and general two-sided estimates for $u_n(x,y)$. Here we show that $u_n(x,y)$ is comparable with $\lambda_0^n \varphi_0(x) \hatphi_0(y)$ if either $x$ or $y$ belongs to a finite set $B\subset X$ and we identify a precise correction term $F_n(x,y)$ that modifies this factorization if both $x, y$ are outside of $B$. This result holds for general confining potentials and all exhausting sequences of sets $A_n \uparrow X$ such that $V$ is increasing with respect to $(A_n)$, without any additional conditions on the growth rate of $V$. The function $F_n(x,y)$, which takes the form of a \enquote{weighted two-step transition kernel} from $x$ to $y$ \eqref{F-n-formula}, seems to be the most explicit expression that can be achieved at this level of generality. In particular, it is well-suited for further applications in this paper. In some specific cases, where there is more information on the regularity of $P$ and $V$, the expression \eqref{F-n-formula} can be simplified.

Our estimates lead to a rather delicate observation, see Theorem \ref{th:pIUC}: The comparison $u_n(x,y) \asymp \lambda_0^n \varphi_0(x) \hatphi_0(y)$ extends to the case where $n \geq n_0$, for some $n_0 \in \nat$, and at least one spatial variable belongs to $A_n$, while there is no restriction on the other variable. Moreover, the comparison constants in the two-sided estimate are uniform with respect to those $x, y$ and $n\geq n_0$. These two-sided estimates are the discrete-time and discrete-space counterpart of the \emph{progressive} IUC (pIUC) property, which was introduced in \cite{Kaleta-Schilling-JFA} in the continuous setting (see above). The key observation is that the regularity of the (intrinsic) semigroup improves as $n \uparrow \infty$; see Definition \ref{def:piuc} and the discussion below it for a formal statement. We note that, for this result, the exhaustion $(A_n)$ must be chosen carefully to align geometrically with the specific $n$-dependent \enquote{level sets} determined by the potential $V$ and the kernels $P$ and $\Phat$. The pIUC property seems to be the strongest regularity property, which we can expect from a general compact Feynman--Kac semigroup $\{\calU_n: n \in \nat_0\}$ (and its dual) without assuming any conditions on the rate of growth of $V$.

\subsection*{Necessary and sufficient condition for aIUC and IHC} If the exhausting sequence $(A_n)$ needed for pIUC can be chosen in a way that $A_{n_0} = A_{n_0+1} = \ldots = X$, then the two-sided estimate $u_n(x,y) \asymp \lambda_0^n \varphi_0(x) \hatphi_0(y)$ holds true uniformly for all $x, y \in X$ and $n \geq n_0$. This leads to the strongest estimate in the present setting -- the \emph{asymptotic} IUC (aIUC), see Definition \ref{def:aiuc}. In Theorem \ref{aIUC} we prove that this property is \emph{equivalent} to the condition that for every $x_0 \in X$ there is a constant $C =C(x_0,n_0)>0$ such that $(C/V(x)^{n_0-1}) \leq P(x,x_0)\Phat(x_0,x)$, $x \in X$. This shows that aIUC is a very strong regularity property for the semigroup $\{\calU_n: n \in \nat_0\}$, which entails that the rate of growth of the potential $V$ is sufficiently fast. For $n=1$, the estimates $u_1(x,y) \asymp \lambda_0 \varphi_0(x) \hatphi_0(y)$ can hold only if the initial transition probability $P(x,y)$ is of the form $P(x,y)=P(x,y_0)P(x_0,y)/P(x_0,y_0)$ for fixed $x_0,y_0$. For this reason we often require that $n_0>1$, in other words we do not consider IUC (which is aIUC with $n_0=1$).

The aIUC property means that the kernels of the intrinsic or ground-state transformed semigroups $\{\calQ_n: n \in \nat_0\}$, see Definition~\ref{def-intrinsic}, have certain boundedness properties: For sufficiently large $n$ we have that $\calQ_n$ maps any $\ell^p(\nu)$, $1\leq p<\infty$ into $\ell^{\infty}(\nu)$ where $\nu(x) = \varphi_0(x)\hatphi_0(x)\mu(x)$ for the underlying measure $\mu$ on $X$ (see the discussion at the beginning of Section \ref{subsec:paiuc} for more details). In Theorem \ref{thm:aiuc-ihp} we show that $\ell^p(\nu) \mapsto \ell^{\infty}(\nu)$, $1 \leq p <\infty$, can be equivalently replaced by $\ell^p(\nu) \mapsto \ell^q(\nu)$, $1 \leq p< q <\infty$, i.e.\ aIUC is equivalent to the discrete-time variant of the intrinsic hypercontractivity (IHC), see Definition \ref{def:ihc}. 

A surprising situation happens in the case of nearest-neighbour random walks on a graph of finite geometry (see Section \ref{sec:NNRW}): Almost all typical examples of such processes lead to discrete Feynman–Kac semigroups for which neither aIUC nor IHC holds, see Propositions \ref{prop:noaiuc} and \ref{prop:noihc} for formal statements.

\subsection*{Applications to (quasi-)ergodicity} 
IUC is a powerful tool for analysing uniform ergodicity and quasi-ergodicity of continuous-time Markov processes and their Feynman–Kac semigroups, see e.g.\ \cite{Takeda,Knobloch-Partzsch,Zhang,Kim-Tagawa,Zhang-Li-Liao}. 
Our standard reference for quasi-ergodicity is the monograph by Collet \& Mart{\'{\i}}nez \& San Mart{\'{\i}}n \cite{Collet}. 
As discussed above, even the weaker aIUC variant comes with substantial limitations on the potential and relies on strong assumptions about the models under consideration. In this paper, we aim to overcome these restrictions in the discrete-time setting. Building on the analytic results presented above, we propose a novel framework for studying quasi-ergodicity of a general compact discrete Feynman–Kac semigroup $\{\calU_n: n \in \nat_0\}$ and the ergodicity of the corresponding intrinsic (ground-state transformed) semigroup $\{\calQ_n: n \in \nat_0\}$ without imposing any conditions on the growth of the confining potentials; this discussion also extends to the dual semigroups. As before, we focus on the DSP (direct step property) case. Our primary objective is to understand the extent to which the ergodicity properties can be made uniform in space, and to determine their rate of convergence, with explicit dependence on the kernel $P$ and the potential $V$.

We study this problem for all spaces $\ell^p$, i.e.\ for $1 \leq p \leq \infty$. Theorem~\ref{thm:unif-ergodic} establishes that the semigroup $\{\calQ_n : n \in \nat_0\}$ is uniformly ergodic on $\ell^\infty(\nu)$ -- this is the smallest of these spaces -- where $\nu$ denotes the stationary measure associated with the intrinsic semigroup. Moreover, the convergence is geometric. This follows from an application of the well-known \emph{Doeblin condition}. Particularly striking is the fact that this result is driven primarily by the DSP property of the kernel $P$, while $V$ may be any confining  potential, regardless of how slowly it grows at infinity. In contrast, no such statement holds for the quasi-ergodicity of the original Feynman–Kac semigroup; see Remark~\ref{rem:neg_l_infty}.

Our main result, Theorem~\ref{pIUC_erg}, focuses on (quasi-)ergodicity in $\ell^1$, which is the most intricate case. It shows that pIUC, relative to a fixed exhaustion $(A_n)$, yields uniform quasi-ergodicity of the semigroup $\{\calU_n: n \in \nat_0\}$ and uniform ergodicity of the intrinsic semigroup $\{\calQ_n: n \in \nat_0\}$ (and their duals) along the exhaustion $(A_n)$, with an explicit convergence rate $\sup_{x \in A_n^c} (1/V(x))$. The proof of this theorem relies on Proposition \ref{kappa_lem}, which explains -- in a more abstract way -- the connection between uniform ergodicity of the intrinsic semigroup and uniform quasi-ergodicity of the original discrete Feynman--Kac semigroup along the same exhaustion in the $\ell^1$ setting. It reveals that both concepts are, in fact, equivalent, and that the corresponding rates of convergence to the stationary (resp.\ quasi-stationary) measure coincide.

We note that intrinsic semigroups are a powerful tool, as they are typically much more regular than the original semigroups. For instance, they can be used to identify the ergodic rate of the \emph{killed} process. In this context, Proposition~\ref{kappa_lem} can be compared to a general result by Diaconis \emph{et al.} \cite[Theorem 7.9]{Diaconis}, which describes the convergence rate to a quasi-stationary measure for a Dirichlet-type Markov chain killed upon exiting a finite set. Our result can be viewed as a uniform analogue of this result in a \enquote{soft-killing} setting, where paths are killed outside a finite set with an intensity governed by a confining potential. It is worth pointing out that we establish the equivalence between the ergodicity of the intrinsic semigroup and the quasi-ergodicity of the original Feynman–Kac semigroup, with matching rates of convergence. In contrast, \cite{Diaconis} proves only one direction. This highlights that Proposition~\ref{kappa_lem} may be of independent interest. As far as we know, such a result is unknown in the continuous-time case. For example, the recent paper \cite{KSch-ergodic} does not consider intrinsic semigroups at all.

Yet another link between ergodicity and the regularizing effects of the intrinsic semigroups is given in Theorem \ref{t:ergodic}. We show that in the aIUC regime both the intrinsic and the original semigroup are uniformly ergodic (resp., quasi-ergodic) in appropriate $\ell^1$ spaces with geometric rates of convergence.  The classical Riesz-Thorin interpolation argument allows us to show that the pIUC property can be used to obtain uniform quasi-ergodicity of the intrinsic semigroup with geometric rate of convergence in the space $\ell^p(\nu) $, for any parameter $p>1$, which may be arbitrarily close to $1$, see Proposition~\ref{prop-riesz-thorin}.

\subsection*{Overview} Let us briefly discuss the structure of the present paper. In Section \ref{sec:Prel} we collect all necessary definitions, introduce the underlying Markov chain, the dual Markov chain, and we formulate the basic assumptions on both processes and potentials.  We define the corresponding discrete Feynman--Kac semigroups. Under the assumption that the potential is confining, these semigroups turn out to be compact, and this allows us to study their ground state eigenfunctions. We also examine the ultracontractivity of the Feynman--Kac semigroups and obtain a useful  formula for its heat kernel. 
In Section \ref{sec:HK-est}, we present the estimates for the heat kernels in the DSP settings. 
Section \ref{sec:IUC} is devoted to the asymptotic/progressive intrinsic ultracontractivity (a/pIUC) and hypercontractivity (IHC) properties. 
In Section \ref{sec:NNRW}, we discuss discrete Feynman--Kac semigroups derived from nearest-neighbour random walks, which evolve on a given graph of finite geometry.  
Section \ref{sec:ergodic} is devoted to the study of ergodic properties of discrete (in time and space) Feynman--Kac semigroups. 

In the closing Section \ref{sec:ex} we illustrate our main results by considering numerous examples. Through various choices of  sub-Markov kernels and potentials we can observe different regularizing effects and rates of ergodic convergence. A summary of our findings is contained in Table~\ref{Table}. 

\section{Preliminaries}\label{sec:Prel}
In this section we introduce discrete-time Feynman--Kac semigroups and discuss some properties of the underlying Markov chains. 

\subsection*{Markov chains and duality}
Let $X$ be a countably infinite set and $P : X \times X \to [0,1]$ a (sub-)probability kernel, that is
\begin{align} \label{def:prob_kernel}
    \sum_{y \in X} P(x,y) \leq 1 \ \ \text{for every} \ x \in X. 
\end{align}
For every sub-probability kernel $P$, there is a time-homogeneous Markov chain $\left\{Y_n: n \in \nat_0\right\}$ on a suitable probability space $(\Omega, \mathcal F, \Pp)$, with values in $X$ and one-step transition probability given by 
\begin{gather*}
	\Pp(Y_{n+1}=y \mid Y_n=x) = P(x,y),\quad x,y\in X.
\end{gather*}
Throughout we use the following standard notation $\Pp^x(Y_n =y) := \Pp(Y_n=y \mid  Y_0=x)$ for the measure of the process starting at $x \in X$; the corresponding expected value is denoted by $\Ee^x$. If, for every $x\in X$, the sum in \eqref{def:prob_kernel} is equal to $1$, then $\Pp^x$ is a probability measure, and the process $\left\{Y_n: n \in \nat_0\right\}$ is \emph{conservative} with \emph{infinite life-time}. Otherwise, it can be interpreted as a \emph{killed} process with \emph{finite life-time}. It is always possible to add an extra \emph{cemetery} point $\partial$ to the state space and to extend $P$ to $(X\cup\{\partial\}) \times (X\cup\{\partial\})$ so that the extension of $\Pp^x$ to $X\cup\{\partial\}$ becomes a probability measure. We will not follow this path, but deal with (sub-)probability kernels $P(x,y)$. 

We are interested in non-self-adjoint operators, so we do not require reversibility of $P$. Instead, we assume that there is a further, possibly different, (sub-)probability kernel $\Phat \colon X \times X \to [0,1]$ and a measure $\mu:X \to (0,\infty)$ such that the following \emph{duality condition} holds:
\begin{align}\label{cond:duality}
	\mu(x)P(x,y) = \mu(y)\Phat (y,x),\quad x,y\in X,
\end{align} 
Summing over $x \in X$ on both sides of \eqref{cond:duality}, we get 
\begin{align} \label{eq:sub-invar}
	\sum_{x \in X} \mu(x)P(x,y) \leq \mu(y), \quad y \in X,
\end{align}
and if $\Phat$ is conservative, then we have the equality 
\begin{align} \label{eq:invar}
	\sum_{x \in X} \mu(x)P(x,y) = \mu(y), \quad y \in X.
\end{align}
This means that $\mu$ is an invariant measure of $P$. The same is true for $\Phat$ when we sum over $y \in X$ in \eqref{cond:duality}. Conversely, if we know that there exists a positive measure $\mu$ on $X$ such that \eqref{eq:sub-invar} holds, then 
\begin{gather*}
	\Phat (x,y):= \frac{\mu(y)}{\mu(x)} P(y,x)
\end{gather*}
defines a (sub-)probability kernel that satisfies the duality relation \eqref{cond:duality}. Moreover, if \eqref{eq:invar} holds, then $\Phat$ is conservative.

The Markov chain $\left\{\widehat{Y}_n: n \in \nat_0\right\}$ whose transition function is given by $\Phat$ can be obtained from $\left\{Y_n: n \in \nat_0\right\}$ through time reversal. If 
\begin{gather*}
	\Phat (x,y)= P(x,y), \quad x, y \in X,
\end{gather*}
then $\left\{Y_n: n \in \nat_0\right\}$ is said to be \emph{reversible}. In this case the equality \eqref{cond:duality} is called the \emph{detailed balance condition}.

The one-step transition \emph{densities} with respect to the measure $\mu$ are denoted by
\begin{align}\label{eq:dens}
	p(x,y) = \frac{P(x,y)}{\mu(y)}
	\quad\text{and}\quad
	\phat (x,y)=\frac{\Phat (x,y)}{\mu(y)}.
\end{align}
Note that \eqref{cond:duality} is equivalent to the equality $\phat (x,y)=p(y,x)$.

\subsection*{Discrete time Feynman--Kac semigroups}
A \emph{lower bounded potential} is a function $V:X \to (0,\infty)$ such that 
\begin{align}\label{potential_lower_bound}
	\inf_{x \in X} V(x) = V_{-} > 0.
\end{align}

\medskip
\begin{center}\bfseries
Unless mentioned otherwise, all potentials considered in this paper\\are assumed to be lower bounded.
\end{center}
\medskip

\noindent
We define operator semigroups $\{\calU_n: n \in \nat_0\}$ and $\{\ducalU_n: n \in \nat_0\}$ by
\begin{align} \label{eq:FK-discrete}
	\calU_0 f = f ,\quad  \calU_nf(x) = \Ee^x \left[\prod_{k=0}^{n-1}\frac{1}{V(Y_k)}  f(Y_n) \right], \quad n\geq 1,
\end{align}
and 
\begin{align} \label{eq:FK-discrete_dual}
	\ducalU_0 f = f ,\quad  \ducalU_nf(x) = \Ee^x \left[\prod_{k=1}^{n}\frac{1}{V(\widehat Y_k)}  f(\widehat Y_n) \right], \quad n\geq 1,
\end{align}
for any admissible function $f:X\to\comp$. Observe that $\calU_n= \calU^{n}$ and $\ducalU_n = \ducalU ^n$, $n\geq 1$,
where $\calU^{n}$ and $\ducalU^n$ denote the $n$\textsuperscript{th} powers of the operators 
\begin{align}\label{def:calU}
	\calU f(x) = \frac{1}{V(x)} \sum_{y \in X} P(x,y)f(y), \quad x \in X,\\
\intertext{and}\label{dual_semigroup}
	\ducalU f(x) = \sum_{y\in X}\Phat (x,y)\frac{1}{V(y)}f(y),\quad x\in X,
\end{align}
respectively. 

The multiplicative functionals $\prod_{k=0}^{n-1} V(Y_k)^{-1}$ and $\prod_{k=1}^{n} V(\widehat Y_k)^{-1}$ appearing inside the expectations in \eqref{eq:FK-discrete} and \eqref{eq:FK-discrete_dual} are the discrete-time counterparts of the classical Feynman--Kac functionals known from the theory of Brownian motion and more general continuous-time Markov processes, see the monograph by Demuth \& van Casteren \cite{Demuth-Casteren}. Therefore, we call $\{\calU_n: n \in \nat_0\}$ and $\{\ducalU_n: n \in \nat_0\}$ the \emph{discrete time Feynman--Kac semigroups} with potential $V$ associated with the chains $\{Y_n: n \in \nat_0\}$ and $\{\widetilde{Y}_n: n \in \nat_0\}$, respectively. 

Both $\calU$ and $\ducalU$ are bounded linear operators on $\ell^p(X,\mu)$ for every $1 \leq p \leq \infty$. Indeed, for any $f \in \ell^p(X,\mu)$, 
\begin{align}\label{bdd_op}
	|\calU f(x)|\leq \frac{1}{V_{-}} \sum_{y\in X} |f(y)|P(x,y) = \frac{1}{V_{-}} \sum_{y\in X} |f(y)|p(x,y)\mu(y),\\
\label{bdd_du_op}
	|\ducalU f(x)|\leq \frac{1}{V_{-}} \sum_{y\in X} |f(y)|\Phat(x,y) = \frac{1}{V_{-}} \sum_{y\in X} |f(y)|\phat(x,y)\mu(y),
\end{align}
and it is easy to see that the integral operators defined by $P$ and $\Phat$ are contractions on every $\ell^p(X,\mu)$. From \eqref{bdd_op} and \eqref{bdd_du_op} we also get that the condition
\begin{align}\label{p_sup_cond}
	\sup_{x,y\in X}p(x,y)<\infty,
\end{align} 
ensures that $\calU, \ducalU: \ell^p(X,\mu) \to \ell^\infty(X,\mu)$ are bounded for every $1 \leq p < \infty$. 

Moreover, the operators $\ducalU$ and $\calU$ are dual (i.e.\ adjoint) in $\ell^2(\mu)$. Indeed, for every $f,g \in \ell^2(\mu)$ Fubini's theorem and the duality condition \eqref{cond:duality} prove
\begin{align*}
	\sum_{x \in X} g(x) \overline{\calU f(x)} \mu(x) 
	& = \sum_{x \in X} \sum_{y \in X} \frac{g(x)}{V(x)} \mu(x)P(x,y) \overline{f(y)} \\
    & = \sum_{y \in X} \sum_{x \in X}  \frac{1}{V(x)} \Phat (y,x) g(x)\overline{f(y)}  \mu(y) \\
	&= \sum_{y \in X} \ducalU g(y) \overline{f(y)} \mu(y).
\end{align*}
These properties are inherited by $\calU_n$ and $\ducalU_n$ for $n > 1$. 

Because of \eqref{def:calU}, \eqref{dual_semigroup} and \eqref{eq:dens}, both $\calU$ and $\ducalU$ are integral operators with kernels
\begin{gather*}
	u(x,y) = \frac{1}{V(x)} p(x,y) 
	\quad\text{and}\quad 
	\duu(x,y) = \phat(x,y) \frac{1}{V(y)} = \frac{1}{V(y)}p(y,x) = u(y,x).
\end{gather*}
It is straightforward to check by induction that
\begin{gather*}
	\calU_n f(x) 
	= \sum_{y\in X} u_n(x,y) f(y)\mu (y) 
	\quad\text{and}\quad 
	\ducalU_n f(x) = \sum_{y\in X} \duu_n(x,y) f(y)\mu (y), \quad n \in \nat_0,
\end{gather*}
where 
\begin{gather} \label{eq:kernel_0_1}
	u_0(x,y) = \bbjedan_{\{x\}}(y) = \begin{cases} 1,& x=y\\ 0, & x\neq y\end{cases},
	\qquad
	u_1(x,y)=u(x,y) = \frac{1}{V(x)} p(x,y),\\
\label{eq:kernel_n}
	u_n(x,y)
	= \frac{1}{V(x)} \sum_{z_{1},\ldots ,z_{n-1}\in X} p(x,z_{1})\frac{\mu(z_1)}{V(z_{1})}\cdot  \ldots \cdot p(z_{n-2}, z_{n-1}) \frac{\mu(z_{n-1})}{V(z_{n-1})}p(z_{n-1}, y), \quad n>1,
\end{gather}
and
\begin{gather*}
	\duu_n(x,y) = u_n(y,x),\quad n \in \nat_0.
\end{gather*}

\subsection*{Notation}
Most of our notation is standard and is, if needed, explained locally. Throughout, we write $f(x)\asymp g(x)$, $x\in A$, if there is a constant $C>0$ such that $C^{-1}g(x)\leq f(x) \leq C g(x)$, for $x\in A$. We use $\casymp{c}$ to highlight the comparability constant. By $\ell^p(X,\mu)$, $1\leq p\leq\infty$ we denote the usual space of $p$-summable sequences, and whenever there is no ambiguity about the underlying space $X$, we simply write $\ell^p (\mu)$.


\section{Heat Kernel estimates}\label{sec:HK-est}

In this section we prove estimates of $u_n(x,y)$ for the class of Markov chains that enjoy the so-called direct step property (DSP). The $n$-step transition probabilities and densities are recursively defined as 
\begin{gather*}
	P_1(x,y) = P(x,y), \quad P_n(x,y) = \sum_{z \in X} P_{n-1}(x,z)P(z,y), \quad n>1;\\
	p_1(x,y) = p(x,y), \quad p_n(x,y) = \sum_{z \in X} p_{n-1}(x,z)p(z,y)\mu (z), \quad n>1.
\end{gather*}
Throughout this paper we will make the following assumptions on the transition probabilities:
\begin{itemize}\itemsep=5pt
\item[\bfseries(\namedlabel{A1}{A1})] 
	$P(x,y) >0$ for all $x,y \in X$ and there exists a constant $C_* \geq 1$ such that
	\begin{align}\label{eq:DSP_for_P}
		P_2(x,y) \leq C_* P(x,y), \quad x, y \in X.
	\end{align} 
\item[\bfseries(\namedlabel{A2}{A2})] 
	$C_{-}= \inf_{x\in X}P(x,x)>0;$

\item[\bfseries(\namedlabel{A3}{A3})] 
	$\sup_{x,y\in X}\big(P(x,y)/\mu(y)\big)<\infty$, i.e.\ \eqref{p_sup_cond} holds. 
\end{itemize}

Since the assumptions \eqref{A1}--\eqref{A3} are symmetric in $x$ and $y$, \eqref{cond:duality} implies that they are automatically true for $\Phat(x,y)$.

\medskip 
We have already discussed the  assumption \eqref{A3} in the previous section -- it ensures the ultracontractivity of the Feynman--Kac semigroup. The assumption \eqref{A2} means that the Markov chain defined by $P$ is uniformly \enquote{lazy} since the set $\{P(x,x):x\in X\}$ is bounded away from zero. This is a technical condition, which looks very restrictive, but it is typically satisfied by Markov chains with DSP, which we consider in this paper. Finally, \eqref{A1} is fundamental for our study. It is equivalent to the condition that $p(x,y) >0$, for all $x,y \in X$, and there exists a constant $C_*\geq 1$ such that
\begin{align}\label{eq:DSP}
	p_2(x,y) \leq C_* p(x,y), \quad x, y \in X.
\end{align}
The estimates \eqref{eq:DSP_for_P} and \eqref{eq:DSP} are called \enquote{direct step property}. The name derives from the following observation: $P_2(x,y)$ is the probability to move from state $x$ to state $y$ in two consecutive steps, and -- up to a uniform multiplicative constant $C_*$ -- it is dominated by the probability $P(x,y)=P_1(x,y)$ to go from $x$ to $y$ in a single step; this is non-trivial for infinite state spaces $X$.

Clearly, this property extends to  all $n$-step transition probabilities. Under \eqref{eq:DSP_for_P} the condition that $P$ is positive is in fact equivalent to the weaker condition that the Markov chain associated with $P$ is irreducible, i.e.\ for every $x, y \in X$ there exists some $n \in \nat$ such that $P_n(x,y) > 0$.

\subsection*{Estimates for general potentials} 

We are going to present general bounds on the kernel $u_n(x,y)$ under the DSP assumption \eqref{A1}. These estimates do not require any regularity assumptions on the potential $V$.
\begin{proposition}\label{thm:bounds-1}
	Assume \eqref{A1} and \eqref{A2}. Let $V$ be a potential satisfying \eqref{potential_lower_bound}. For every $x,y \in X$ and $n \geq 2$ we have
	\begin{gather}\label{u_n-lower-bound}
		u_n(x,y) \geq c_1^{n-2}\frac{1}{V(x)}\sum_{z\in X}p(x,z)p(z,y)\frac{\mu(z)}{V(z)^{n-1}}
	\end{gather}
	and
	\begin{gather}\label{eq:estm}
		u_n(x,y) \leq c_2^{n-2}\frac{1}{V(x)}\sum_{z\in X}p(x,z)p(z,y)\frac{\mu(z)}{V(z)^{n-1}},
	\end{gather}
	with $c_1=C_{-}$ and $c_2=2C_*$.
\end{proposition}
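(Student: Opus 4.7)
The proof naturally splits into a lower and an upper bound, and both begin from the explicit multi-index formula \eqref{eq:kernel_n} for $u_n(x,y)$.

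For the \textbf{lower bound}, my plan is to keep only the diagonal contribution, restricting the sum in \eqref{eq:kernel_n} to $z_1=z_2=\cdots=z_{n-1}=z$ and discarding all other multi-indices. Along this restriction, the $n-2$ intermediate one-step densities become $p(z,z)=P(z,z)/\mu(z)$, which by the uniform laziness assumption \eqref{A2} is at least $C_-/\mu(z)$; the $n-1$ weight factors $\mu(z_i)/V(z_i)$ collapse to $\mu(z)^{n-1}/V(z)^{n-1}$. Combining $p(z,z)^{n-2}\geq C_-^{n-2}/\mu(z)^{n-2}$ with $\mu(z)^{n-1}/V(z)^{n-1}$ leaves exactly the factor $\mu(z)/V(z)^{n-1}$, which gives \eqref{u_n-lower-bound} with $c_1=C_-$.

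For the \textbf{upper bound}, I would argue by induction on $n$, based on the semigroup identity $u_{n+1}(x,y)=\sum_z u(x,z)\,u_n(z,y)\mu(z)$. The base case $n=2$ is \eqref{eq:kernel_n} with equality. For the induction step, substituting the inductive bound on $u_n(z,y)$ produces a double sum over $z,w$ with combined weight $1/[V(z)\,V(w)^{n-1}]$. The key observation is the elementary inequality
$$\frac{1}{V(z)\,V(w)^{n-1}} \leq \frac{1}{V(z)^n}+\frac{1}{V(w)^n},$$
which follows from $V(z)V(w)^{n-1}\geq \min\{V(z),V(w)\}^n$. After applying it, the double sum splits into two pieces, and in each piece one of the inner sums contracts to a two-step density $p_2(x,w)$ or $p_2(z,y)$, which the DSP \eqref{eq:DSP} bounds by $C_*p(x,w)$ or $C_*p(z,y)$. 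This yields the recursion $c_{n+1}\leq 2C_* c_n$ on the constants, and together with $c_2=1$ iterates to $c_n\leq (2C_*)^{n-2}$.

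I expect the main obstacle to be identifying the correct decoupling inequality for the weight product; once it is in place, the remainder is bookkeeping. For robustness I note that a non-inductive route is also viable: apply the global bound $\prod_{i=1}^{n-1}1/V(z_i)\leq \sum_{k=1}^{n-1}1/V(z_k)^{n-1}$ (each term dominated by the maximum) directly to \eqref{eq:kernel_n}, sum out the variables $z_i$, $i\neq k$, to obtain $p_k(x,z_k)p_{n-k}(z_k,y)$, and then invoke the iterated DSP $p_m\leq C_*^{m-1}p$; the resulting constant $(n-1)C_*^{n-2}$ is majorised by $(2C_*)^{n-2}$ thanks to $n-1\leq 2^{n-2}$ for $n\geq 2$.
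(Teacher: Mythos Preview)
Your proposal is correct and matches the paper's proof almost exactly: the lower bound is identical (restrict to the diagonal $z_1=\cdots=z_{n-1}=z$ and invoke \eqref{A2}), and the upper bound is the same induction with the same key idea. The only cosmetic difference is that the paper implements your decoupling inequality as an explicit case split $X\times X = \{V(z)\geq V(w)\}\cup\{V(z)<V(w)\}$, bounding $1/(V(z)V(w)^{n-1})$ by $1/V(w)^n$ on the first set and by $1/V(z)^n$ on the second, whereas you package the same observation as the additive bound $1/(V(z)V(w)^{n-1})\leq 1/V(z)^n+1/V(w)^n$; both lead to the identical recursion $c_{n+1}\leq 2C_* c_n$.
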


\begin{proof}
	If we keep in the definition \eqref{eq:kernel_n} of $u_n(x,y)$ only the term with $z_1=z_2=\ldots=z_{n-1}$, we obtain
	\begin{align*}
	u_n(x,y)
		&\geq \frac 1{V(x)}\sum_{z\in X}p(x,z)(p(z,z)\mu(z))^{n-2}\frac{\mu(z)}{V(z)^{n-1}}p(z,y)\\
		&\geq C_{-}^{n-2}\frac{1}{V(x)}\sum_{z\in X}p(x,z)p(z,y)\frac{\mu(z)}{V(z)^{n-1}};
	\end{align*}
	the last inequality follows from \eqref{A2}. 
	
	In order to find the upper bound, we use induction. For $n=2$ \eqref{eq:estm} (with equality) follows directly from the definition of $u_2(x,y)$ as $c_2=2C_* > 1$. Consider the following decomposition
	\begin{align*}
		X \times X 
		&= \left\{(z,w)\in X \times X: V(z)\geq V(w)\right\}\cup \left\{(z,w)\in X \times X: V(z)<V(w)\right\}\\
		& =: D_1\cup D_2.
	\end{align*}
	Suppose that \eqref{eq:estm} holds for some $n>2$ with the constant $c_2=2C_* > 1$. Then we have
	\begin{align*}
	u_{n+1}(x,y) 
	&= \frac 1{V(x)} \sum_{z\in X}p(x,z)\mu(z)u_n(z,y)\\
	&\leq c_2^{n-2} \frac 1{V(x)}\sum_{(z,w) \in D_1}p(x,z)\frac{\mu(z)}{V(z)}p(z,w)\frac{\mu(w)}{V(w)^{n-1}}p(w,y)\\
	&\qquad \mbox{} + c_2^{n-2} \frac 1{V(x)}\sum_{(z,w) \in D_2}p(x,z)\frac{\mu(z)}{V(z)}p(z,w)\frac{\mu(w)}{V(w)^{n-1}}p(w,y)\\
	&\leq c_2^{n-2}\frac 1{V(x)}\sum_{w \in X}\left(\sum_{z \in X}p(x,z)\mu(z)p(z,w)\right)\frac{\mu(w)}{V(w)^{n}}p(w,y)\\
	&\qquad \mbox{} + c_2^{n-2}\frac 1{V(x)}\sum_{z \in X}p(x,z)\frac{\mu(z)}{V(z)^{n}}\left(\sum_{w \in X}p(z,w)\mu(w)p(w,y)\right)\\
	&\leq 2\,C_*\,c_2^{n-2}\frac{1}{V(x)}\sum_{z\in X}p(x,z)\frac{\mu(z)}{V(z)^{n}}p(z,y),
	\end{align*}
	where the last inequality follows from \eqref{eq:DSP}. This completes the induction step, hence the proof.
\end{proof}

\subsection*{Estimates for confining potentials} 

In the remainder of this section we assume that \eqref{A1}--\eqref{A3} hold. 
\begin{definition} 
	A potential $V:X\to (0,\infty)$ is \emph{confining}, if it satisfies
	\begin{itemize}
	\item[\bfseries(\namedlabel{B}{B})] 
	\text{for every $M \in \nat$ there exists a finite set $B_M \subset X$ such that $V(x) \geq M, \ x \in B_M^c.$}
	\end{itemize}
\end{definition} 

The condition \eqref{B} ensures that the operators $\calU_n$, $\ducalU_n$, $n \in \nat$, are compact in $\ell^2(\mu)$, see e.g.\ \cite[Lemma 3.7]{CKS-ALEA}. Due to compactness, the spectra of these operators, excluding zero, are discrete. Moreover, it follows from Jentzsch's theorem \cite[Theorem V.6.6.]{Schaefer} that 
\begin{gather*}
	\lambda_0:= \sup_{\lambda \in \sigma(\calU)} |\lambda| = \sup_{\lambda \in \sigma(\ducalU)} |\lambda| >0
\end{gather*}
is a common simple eigenvalue of both $\calU$ and $\ducalU$. The corresponding eigenfunctions $\phi_0$, $\widehat{\phi}_0$, which we call \emph{ground states}, are strictly positive on $X$. Throughout the paper we assume that the eigenfunctions are normalized:
\begin{align}\label{ass:normalization}
	\left\|\phi_0\right\|_{\ell^2(\mu)}=\left\|\hatphi_0\right\|_{\ell^2(\mu)}=1.
\end{align}
Clearly, we have
\begin{gather*}
	\calU_n \phi_0 = \lambda_0^n \phi_0 
	\quad\text{and}\quad 
	\ducalU_n \widehat{\phi}_0 = \lambda_0^n  \widehat{\phi}_0, \quad n \in \nat.
\end{gather*}
Recall that under \eqref{A3} the semigroups $\{\calU_n: n\in \nat\}$, $\{\ducalU_n: n\in \nat\}$ are ultracontractive. Consequently, the ground states $\phi_0$ and $\widehat{\phi}_0$ are bounded functions. 

Since $\lambda_0^{-1}\calU\phi_0 = \phi_0$, the ground state $\phi_0$ is  $(\calU^{\lambda_0}-\id)$-harmonic in $X$ (here $\calU^{\lambda_0}:= \lambda_0^{-1}\calU$ -- this amounts to changing the potential $V\rightsquigarrow \lambda_0^{-1}V$), we can apply \cite[Theorem~2.2 \& Proposition~2.4]{CKS-ALEA} to show that for every fixed $x_0 \in X$ there exists a constant $c_1=c_1(x_0,V,P)$ such that
\begin{align}\label{phi_0_bound}
	\phi_0(x) \casymp{c_1} \frac{p(x,x_0)}{V(x)},\quad x\in X. 
\end{align}
More precisely, it is shown in \cite{CKS-ALEA} that there exists a finite set $B \subset X$ such that $x_0 \in B$ and the two-sided estimates \eqref{phi_0_bound} hold for all $x \in B^c$. Since $B$ is finite and $\phi_0$ and $p$ are strictly positive and bounded, we can extend these estimates to $x\in B$, hence to $x\in X$; this may require a larger comparability constant. 

Since $\widehat{\phi}_0$ is the ground state of $\ducalU$, the function $\widehat{\phi}_0/V$ is a strictly positive eigenfunction of the operator $\widetilde \calU$ corresponding to the eigenvalue $\lambda_0$, where
\begin{gather*}
	\widetilde \calU g(x) = \frac{1}{V(x)} \sum_{y \in X}  \Phat(x,y)g(y).
\end{gather*}
This observation and the fact that the kernel $\phat$ also satisfies \eqref{A1}-\eqref{A3} allow us to use \eqref{phi_0_bound} for $\widehat{\phi}_0/V$. This gives for every fixed $x_0 \in X$ the following estimate:
\begin{align}\label{phi_hat_bound}
	\widehat{\phi}_0(x) \casymp{c_2} \widehat{p}(x,x_0), \quad x\in X,
\end{align}
with a constant $c_2=c_2(x_0,V,\Phat)$.
 
For our main result in this section we need the following definition.
\begin{definition}\label{def:exh_fam}
\begin{enumerate}
\item
	A sequence of sets $(A_n)_{n\in \nat}$ in $X$ is called \emph{exhausting}, if $A_n\subseteq A_{n+1}$ and $\bigcup_n A_n = X$. 
\item
	For any $x\in X$ we define $ \alpha(x) :=\min\left\{n\in \nat : x\in A_n\right\}$, and we call it the \emph{first appearance function}.
\item
	A potential $V:X\to\real$ is said to be \emph{increasing} with respect to the exhausting sequence $(A_n)_{n\in \nat}$, if $V(x)\leq V(y)$ whenever $\alpha(x) < \alpha(y)$ (or equivalently, $A_{\alpha(x)}\subsetneq A_{\alpha(y)}$).
\end{enumerate}
\end{definition} 
If $V$ is a confining potential, then there is always an exhausting sequence $(A_n)_{n\in \nat}$ such that all $A_n$ are finite sets, and $V$ is increasing with respect to the sequence $(A_n)_{n\in \nat}$.  If $(v_n)$ is an increasing sequence, which represents the range of the potential $V$, then one can take $A_n = \left\{x \in X: V(x) \leq v_n \right\}$ (or simply, $A_n = \left\{x \in X: V(x) \leq n \right\}$, depending on  the particular application).

\begin{theorem}\label{HK:estimates}
	Assume \eqref{A1}--\eqref{A3} and \eqref{B}. Let $(A_n)_{n\in \nat}$ be an exhausting sequence such that $V$ is increasing with respect to it, and let $B \subset X$ be an arbitrary finite set. Then there exists a constant $\widetilde C >0$,  independent of $n\in\nat$, such that the following estimates hold.
\begin{enumerate}
\item\label{HK:estimates-1} 
	If $x\in B$ or $ y\in B$, then 
	\begin{align}\label{inner_estimate}
		u_n(x,y)\casymp{\widetilde C} \lambda_0^n \phi_0(x)\widehat{\phi}_0(y),\quad n\geq 1;
	\end{align}
\item\label{HK:estimates-2} 
	If $x,y\in B^c$, then
	\begin{align}\label{outer_estimate}
		u_n(x,y) \casymp{\widetilde C} \left( \lambda_0^n \phi_0(x)\widehat{\phi}_0(y) + C^n F_n(x,y)\right)\quad n\geq 1,
	\end{align}
	where $C =\max\big\{3C_*,C_{-}^{-1}\big\}$ for the upper bound and $C = \min\big\{C_*^{-1},C_{-}\big\}$ for the lower bound, and
	\begin{align}\label{F-n-formula}
		F_n(x,y)= \frac{1}{V(x)}\sum_{z\in B^c\cap A_{\alpha(x)}\cap A_{\alpha(y)}} p(x,z)p(z,y)\frac{1}{V(z)^{n-1}}\mu (z).
	\end{align}
\end{enumerate}
In particular, we have
\begin{gather}\label{full_lower_estimate}
	u_n(x,y) \geq \widetilde C^{-1} \lambda_0^n\phi_0(x)\widehat\phi_0(y)\quad\text{for all\ \ } x,y\in X.
\end{gather}
\end{theorem}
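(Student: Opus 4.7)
The plan is to work from the two-sided bound of Proposition~\ref{thm:bounds-1},
\[
u_n(x,y)\asymp \frac{c^{n-2}}{V(x)}\sum_{z\in X} p(x,z)\,p(z,y)\,\frac{\mu(z)}{V(z)^{n-1}},
\]
with $c=C_{-}$ for the lower and $c=2C_*$ for the upper bound, and to split the sum over $z$ into three disjoint regions matching the structure of the claim:
\[
X=B\;\sqcup\;\bigl(B^c\cap A_{\alpha(x)}\cap A_{\alpha(y)}\bigr)\;\sqcup\;\bigl(B^c\setminus(A_{\alpha(x)}\cap A_{\alpha(y)})\bigr).
\]
The middle piece reproduces $F_n(x,y)$ by definition; the inner piece (over $z\in B$) should produce the ground-state term $\lambda_0^n\phi_0(x)\widehat\phi_0(y)$; and the outer piece must be shown to be absorbed into the first two.

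For the inner piece I would first derive a Harnack-type consequence of DSP: applying \eqref{A1} in the form $p(x,z')\mu(z')p(z',z)\le p_2(x,z)\le C_*p(x,z)$ for $z,z'\in B$, and using $\min_{z,z'\in B}\mu(z')p(z',z)>0$ (as $B$ is finite), yields $p(x,z)\asymp p(x,x_0)$ uniformly for $x\in X$ and $z\in B$, where $x_0\in B$ is a fixed reference point; a symmetric argument gives $p(z,y)\asymp p(x_0,y)$. Together with the ground-state estimates \eqref{phi_0_bound}--\eqref{phi_hat_bound}, this reduces the inner piece to a constant multiple of $\phi_0(x)\widehat\phi_0(y)\cdot\sum_{z\in B}\mu(z)/V(z)^{n-1}$. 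The identification of $c^{n-2}\sum_{z\in B}\mu(z)/V(z)^{n-1}$ with $\lambda_0^n$ up to multiplicative constants uniform in $n$ would then be obtained by invoking the eigenvalue identities $\calU_n\phi_0=\lambda_0^n\phi_0$ and $\ducalU_n\widehat\phi_0=\lambda_0^n\widehat\phi_0$ evaluated at the reference point $x_0$, which link $\lambda_0^n$ to the diagonal kernel values $u_n(x_0,x_0)$.

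A cleaner route to the upper bound in the inner case \eqref{inner_estimate} uses the positivity of the integral kernel: since $\mathds{1}_{\{x_0\}}/\mu(x_0)\le (\mu(x_0)\phi_0(x_0))^{-1}\phi_0$ pointwise on $X$, the monotonicity of $\calU_n$ gives
\[
u_n(x,x_0) = \calU_n\bigl(\mathds{1}_{\{x_0\}}/\mu(x_0)\bigr)(x)\;\le\;\frac{\lambda_0^n\phi_0(x)}{\mu(x_0)\phi_0(x_0)},\qquad x\in X,
\]
and since $\widehat\phi_0$ is positive and bounded above on the finite set $B$, this extends to $u_n(x,y)\le C\lambda_0^n\phi_0(x)\widehat\phi_0(y)$ for every $y\in B$; the case $x\in B$ follows by the same argument applied to $\ducalU_n$. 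The corresponding lower bounds (and hence the \enquote{in particular} estimate \eqref{full_lower_estimate}) come by retaining only the $z\in B$ piece (plus, for the outer case, the middle piece) in the Proposition~\ref{thm:bounds-1} lower bound and invoking the Harnack/ground-state comparison above.

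I expect the main technical obstacle to be the control of the outer region $z\in B^c\setminus(A_{\alpha(x)}\cap A_{\alpha(y)})$. Because $V$ is increasing with respect to $(A_n)$, any such $z$ satisfies $V(z)\ge V(x)\wedge V(y)$, so combining with the DSP identity $\sum_z p(x,z)p(z,y)\mu(z)=p_2(x,y)\le C_*p(x,y)$ bounds the outer contribution by $C_*p(x,y)/\bigl(V(x)(V(x)\wedge V(y))^{n-1}\bigr)$. Showing that this quantity is absorbed by $\lambda_0^n\phi_0(x)\widehat\phi_0(y)$ when $x\in B$ or $y\in B$, and by $\lambda_0^n\phi_0(x)\widehat\phi_0(y)+C^nF_n(x,y)$ when both $x,y\in B^c$, is where the specific constants $\max\{3C_*,C_{-}^{-1}\}$ (upper) and $\min\{C_*^{-1},C_{-}\}$ (lower) enter. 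The asymmetric choice of the cut $A_{\alpha(x)}\cap A_{\alpha(y)}$ is designed precisely to balance these contributions and to keep the comparability constant $\widetilde C$ in \eqref{inner_estimate}--\eqref{outer_estimate} independent of $n$.
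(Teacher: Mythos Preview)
Your plan to split the Proposition~\ref{thm:bounds-1} bound over $z\in B$, $z\in B^c\cap A_{\alpha(x)}\cap A_{\alpha(y)}$, and the outer region is natural, but the step \enquote{identify $c^{n-2}\sum_{z\in B}\mu(z)/V(z)^{n-1}$ with $\lambda_0^n$ up to multiplicative constants uniform in $n$} is false in general and cannot be repaired. The constants in Proposition~\ref{thm:bounds-1} are $c=C_-$ (lower) and $c=2C_*$ (upper), neither of which has any a priori relation to $\lambda_0$. If the global minimum $V_-$ of $V$ is attained in $B$, then $\sum_{z\in B}\mu(z)/V(z)^{n-1}\asymp V_-^{-(n-1)}$ uniformly in $n$, so your inner piece behaves like $(c/V_-)^n$. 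But from the eigenequation one has $\lambda_0\ge C_-/V_-$ (keep only the diagonal term in $u_n(z^*,z^*)$ at a minimizer $z^*$), while $\lambda_0\le 1/V_-$ trivially; generically both inequalities are strict. Hence $(C_-/V_-)^n/\lambda_0^n\to 0$ and $(2C_*/V_-)^n/\lambda_0^n\to\infty$, so neither the lower nor the upper inner-piece estimate can be comparable to $\lambda_0^n\phi_0(x)\widehat\phi_0(y)$ with a constant $\widetilde C$ independent of $n$. The eigenvalue identities you invoke express $\lambda_0^n$ as an \emph{integral} of $u_n$, not the other way around, and do not rescue this comparison.

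The paper avoids this obstruction by not using Proposition~\ref{thm:bounds-1} for the ground-state part at all. Part~\ref{HK:estimates-1} is proved by induction on $n$: one first checks $u_1(x,y)\asymp\lambda_0\phi_0(x)\widehat\phi_0(y)$ directly when $x\in B$ or $y\in B$ (your Harnack argument and \eqref{phi_0_bound}--\eqref{phi_hat_bound} do this), and then the semigroup identity $u_{n+1}(x,y)=\sum_z u_n(x,z)u(z,y)\mu(z)$ together with the eigenequation $\sum_z\widehat\phi_0(z)u(z,y)\mu(z)=\lambda_0\widehat\phi_0(y)$ propagates the factor $\lambda_0$ exactly at each step, preserving the same constant $\widetilde C$. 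Part~\ref{HK:estimates-2} is then also proved by induction: one writes $u_{n+1}=\sum_{z\in B}+\sum_{z\in B^c}$, feeds the first sum into part~\ref{HK:estimates-1} (already proved), applies the induction hypothesis to the second, and carefully manipulates the resulting $F_n$ term using DSP and the monotonicity of $V$. Your \enquote{cleaner route} via $\one_{\{x_0\}}\le c\,\phi_0$ is a valid shortcut for the \emph{upper} bound in~\ref{HK:estimates-1}, but the matching lower bound, and the whole of~\ref{HK:estimates-2}, really do require the inductive mechanism to capture the correct exponential rate $\lambda_0^n$.
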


\begin{proof}
Fix a finite $B \subset X$ and a reference point $x_0 \in B$. 

\medskip\emph{Proof of \ref{HK:estimates-1}}: 
Observe that by \cite[Lemma 2.1]{CKS-ALEA} and by the fact that $0 < \min_{y \in B} \mu(y) \leq \max_{y \in B} \mu(y) < \infty$, we have
\begin{gather*}
	u(x,y) 
	=  \frac{1}{V(x)}p(x,y)
	\asymp 
	\begin{cases}
	\frac{1}{V(x)}p(x,x_0),\quad y\in B,\\
	p(x_0,y),\qquad \quad x\in B.
	\end{cases}
\end{gather*}
It follows from \eqref{phi_0_bound} and \eqref{phi_hat_bound} that
\begin{gather*}
	u(x,y )
	\asymp 
	\begin{cases}
	\phi_0(x),\quad y\in B,\\
	\widehat{\phi}_0(y),\quad  x\in B.
	\end{cases}
\end{gather*}
Since the ground states are strictly positive and bounded on $X$, we see that 
\begin{align*}
	\phi_0 (x)
	&\asymp \lambda_0\phi_0(x)\widehat{\phi}_0(y),\quad y\in B,\\
	\widehat{\phi}_0(y) 
	&\asymp \lambda_0 \phi_0(x)\widehat{\phi}_0(y),\quad x\in B.
\end{align*}
Hence, there is a constant $\widetilde C$ such that \eqref{inner_estimate} holds for $n=1$. In order to show \eqref{inner_estimate} for $n > 1$, we use induction. If $x \in B$, then we have
\begin{align*}
	u_{n+1}(x,y) = \sum_{z\in X} u_n(x,z)u(z,y)\mu(z)
	&\casymp{\widetilde C} \lambda_0^n \sum_{z\in X}\phi_0(x)\widehat{\phi}_0(z)u(z,y)\mu(z) \\
	&= \lambda_0^n\phi_0(x)\widehat{\calU}\widehat{\phi}_0(y)
	= \lambda_0^{n+1}\phi_0(x) \widehat{\phi}_0(y).
\end{align*}
The argument for the case $y\in B$ is similar. This completes the proof of \ref{HK:estimates-1}.

\medskip\emph{Proof of the upper bound in \ref{HK:estimates-2}}: By \eqref{A2}, we have
\begin{align*}
	\frac{C_{-}}{V(x)}p(x,y)\leq \frac{1}{V(x)}p(x,y)p(y,y)\mu(y)\leq F_1(x,y).
\end{align*}
It follows that 
\begin{align*}
	u(x,y)\leq \frac{1}{C_{-}}F_1(x,y) \leq \widetilde C C F_1(x,y),
\end{align*}
which is the upper bound for $n=1$. For $n>1$ we use again induction. Assume that the lower bound in \eqref{outer_estimate} holds for some $n \geq 1$. For the induction step $n\rightsquigarrow n+1$ it is enough to consider the case $A_{\alpha(y)}\subseteq A_{\alpha(x)}$, the opposite case can be treated similarly. We decompose the kernel $u_{n+1}(x,y)$ as follows
\begin{align*}
	u_{n+1}(x,y)
	&= \left( \sum_{z\in B}+\sum_{z\in B^c}\right) u(x,z)u_n(z,y)\mu(z)
	=I(x,y)+J(x,y).
\end{align*}
By \ref{HK:estimates-1}, 
\begin{align}\label{I_bound}
	I(x,y)&\leq \widetilde C \lambda_0^n\widehat{\phi}_0(y)\sum_{z\in B}u(x,z) \phi_0(z)\mu(z).
\end{align}
For $J(x,y)$ we apply the induction hypothesis:
\begin{align*}
	J(x,y)
	&\leq \widetilde C \sum_{z\in B^c}u(x,z)\left( C^n F_n(z,y)+\lambda_0^n\phi_0(z)\widehat{\phi}_0(y)\right) \mu(z)\\
    &= \widetilde C \left(C^n  \sum_{z\in B^c \cap A_{\alpha(y)} }u(x,z) F_n(z,y) \mu(z) 
			                        + C^n \sum_{z\in B^c \cap A_{\alpha(y)}^c }u(x,z) F_n(z,y)\mu(z)\right.  \\
	&\qquad\qquad \left.\mbox{} + \lambda_0^n \widehat{\phi}_0(y) \sum_{z\in B^c} u(x,z)\phi_0(z) \mu(z)\right)\\
	&= \widetilde C \left( J_1(x,y) + J_2(x,y) + J_3(x,y)\right).
\end{align*}
Using a similar decomposition as in the proof of Proposition \ref{thm:bounds-1} and \eqref{eq:DSP}, we get
\begin{align*}
	&J_1(x,y) 
	= \frac{C^{n}}{V(x)} \sum_{z\in B^c \cap A_{\alpha(y)}}p(x,z) \frac{\mu(z)}{V(z)} \sum_{w\in B^c\cap A_{\alpha(z)}\cap A_{\alpha(y)}}  p(z,w)\frac{\mu(w)}{V(w)^{n-1}}p(w,y)\\
    &\quad\leq \frac{C^{n}}{V(x)} \sum_{z,w\in B^c \cap A_{\alpha(y)}} \bigg(\one_{\left\{V(w) \leq V(z)\right\}} + \one_{\left\{V(w) > V(z)\right\}}\bigg)p(x,z) \frac{\mu(z)}{V(z)}p(z,w)\frac{\mu(w)}{V(w)^{n-1}}p(w,y)\\
	&\quad\leq \frac{C^{n}}{V(x)} \left( \sum_{w\in B^c\cap A_{\alpha(y)}}\left(\sum_{z\in X}p(x,z)p(z,w)\mu(z)\right)p(w,y)\frac{\mu(w)}{V(w)^{n}}\right.\\
	&\qquad\qquad\qquad\qquad \left.\mbox{} +  \sum_{z\in B^c\cap A_{\alpha(y)}}p(x,z)\frac{\mu(z)}{V(z)^{n}}\left(\sum_{w\in X}p(z,w)p(w,y)\mu(w)\right)\right)\\
	&\quad\leq 2C_*\frac{C^{n}}{V(x)}\sum_{w\in B^c\cap A_{\alpha(y)}}p(x,w)\frac{\mu(w)}{V(w)^{n}}p(w,y)\\
	&\quad=  2C_* C^{n}F_{n+1}(x,y).
\end{align*}
Moreover, 
\begin{align*}
	J_2(x,y) 
	&= C^n\sum_{z\in B^c \cap A_{\alpha(y)}^c} \frac 1{V(x)} p(x,z) \frac 1{V(z)} \sum_{w\in B^c\cap A_{\alpha(z)}\cap A_{\alpha(y)}}  p(z,w) \frac 1{V(w)^{n-1}} p(w,y)\mu(w)\mu(z)\\
	&\leq C^{n}V(x)^{-1} \sum_{w\in B^c\cap A_{\alpha(y)}} \left(\sum_{z\in X}p(x,z)p(z,w)\mu(z)\right) p(w,y) \frac 1{V(w)^{n}}\mu(w)\\
	&\leq C^{n}C_* \frac 1{V(x)}\sum_{w\in B^c\cap A_{\alpha(y)}}p(x,w)p(w,y) \frac 1{V(w)^{n}}\mu(w)\\
	&= C_*C^{n}F_{n+1}(x,y);
\end{align*}
in the first inequality we use the monotonicity of $V$ (note that $z \in A_{\alpha(y)}^c$ and $w\in A_{\alpha(z)}\cap A_{\alpha(y)} = A_{\alpha(y)}$ imply $A_{\alpha(w)}\subsetneq A_{\alpha(z)}$), and in the second inequality we use again \eqref{eq:DSP}. 

Since $J_3(x,y) = \lambda_0^n \widehat\phi_0(y)\calU_1\phi_0(x)= \lambda_0^{n+1} \widehat\phi_0(y)\phi_0(x)$, the sum $J_1(x,y)+J_2(x,y)+J_3(x,y)$ yields the upper bound in \eqref{outer_estimate}. 

\medskip\emph{Proof of the lower bound in \ref{HK:estimates-2}}: From \eqref{eq:DSP} we see
\begin{align*}
	p(x,y)
	&\geq \frac{1}{C_*}\sum_{z\in X}p(x,z)p(z,y)\mu(z)\\
	&\geq  \frac{1}{C_*}p(x,x_0)p(x_0,y)\mu(x_0)\\
	&= \frac{1}{C_*} p(x,x_0)\widehat{p}(y,x_0)\mu(x_0).
\end{align*}
Using \eqref{phi_0_bound} and \eqref{phi_hat_bound}, and increasing the constant $\widetilde C$ (if necessary), we get for $x,y\in B^c$,
\begin{align*}
	u(x,y)\geq  \frac{2}{ \widetilde C}\lambda_0\,\phi_0(x)\widehat{\phi}_0(y).
\end{align*}
On the other hand, using \eqref{eq:DSP} once again, 
\begin{align*}
	u(x,y)\geq \frac{1}{C_*}F_1(x,y) \geq \frac{2C}{\widetilde C} F_1(x,y).
\end{align*}
Consequently,
\begin{gather*}
	u(x,y) \geq \frac{1}{\widetilde C}\left(\lambda_0\,\phi_0(x)\widehat{\phi}_0(y) + C F_1(x,y)\right),
\end{gather*}
which is the lower bound for $n=1$. We will now use induction. Assume that the bound is true for some $n\geq 1$. We write
\begin{align*}
	u_{n+1}(x,y)  = \left( \sum_{z\in B}+\sum_{z\in B^c}\right) u_n(x,z)u(z,y)\mu(z).
\end{align*}
For the first sum we can use the lower bound from \ref{HK:estimates-1} for $u_n(x,z)$,  for the second sum we use the induction hypothesis: 
\begin{align*}
	u_{n+1}(x,y)  
	&\geq \widetilde C^{-1} \lambda_0^n \phi_0(x) \sum_{z\in B} \widehat{\phi}_0(z)u(z,y)\mu(z) \\
    &\qquad\qquad\mbox{} + \widetilde C^{-1} \sum_{z\in B^c} \big(\lambda_0^n\phi_0(x)\widehat{\phi}_0(z) + C^n F_n(x,z)\big)u(z,y)\mu(z)\\
    &= \widetilde C^{-1} \left(\lambda_0^n \phi_0(x) \sum_{z\in X} \widehat{\phi}_0(z)u(z,y)\mu(z) +  C^n\sum_{z\in B^c} F_n(x,z)u(z,y)\mu(z)\right) \\
	&= \widetilde C^{-1} \left(\lambda_0^{n+1} \phi_0(x) \widehat{\phi}_0(y) +  C^n\sum_{z\in B^c} F_n(x,z)u(z,y)\mu(z)\right).
\end{align*}
Furthermore,
\begin{align*}
	\sum_{z\in B^c} F_n(x,z)u(z,y)\mu(z) 
	&= \frac{1}{V(x)}\sum_{z\in B^c} \sum_{w\in B^c\cap A_{\alpha(x)}\cap A_{\alpha(z)}} p(x,w)\frac{\mu(w)}{V(w)^{n-1}}p(w,z)\frac{1}{V(z)}p(z,y)\mu(z) \\
    &\geq \frac{1}{V(x)}\sum_{z\in B^c \cap A_{\alpha(x)}} p(x,z)p(z,z)\mu(z)\frac{1}{V(z)^n}p(z,y)\mu(z) \\
	&\geq  \frac{C_{-}}{V(x)}\sum_{z\in B^c \cap A_{\alpha(x)} \cap A_{\alpha(y)}} p(x,z)p(z,y)\frac{\mu(z)}{V(z)^n} \\
	&= C F_{n+1}(x,y);
\end{align*}
in the second line we keep only the terms with $w=z$, and in the third line we use \eqref{A2}. This gives the claimed lower bound and completes the proof. 
\end{proof}

\section{Intrinsic contractivity properties}\label{sec:IUC}

In this section we continue our discussion on the behaviour of the kernels $u_n(x,y)$ and connect them with smoothing properties of intrinsic semigroups. We find sharp necessary and sufficient conditions for various intrinsic contractivity properties.

\subsection{Intrinsic semigroups}

We  introduce a new base measure by setting
\begin{gather}\label{eq:v-measure}
	\nu(x):=\phi_0(x)\hatphi_0(x)\mu(x), \quad x \in X.
\end{gather}
The normalization condition \eqref{ass:normalization} for $\phi_0, \widehat\phi_0$ combined with the Cauchy--Schwarz inequality implies that $\nu(x)$ is a sub-probability measure on $X$. 
\begin{definition}\label{def-intrinsic}
	The \emph{intrinsic} (or \emph{ground state transformed}) \emph{semigroups} $\{\calQ_n: n\in \nat_0\}$ and $\{\ducalQ_n: n\in \nat_0\}$ related to the semigroups $\{\calU_n: n\in \nat_0\}$ and $\{\widehat{\calU}_n: n\in \nat_0\}$, respectively, are given by
	\begin{gather*}
		\calQ_n\,f(x)
		= \frac{1}{\lambda_0^{n}\phi_0(x)}\calU_n (f \phi_0)(x) 
		= \sum_{y \in X}\frac{u_n(x,y)}{\lambda_0^{n}\phi_0(x)\hatphi_0(y)}f(y)\nu(y),\\
		\ducalQ_n\,f(x)
		= \frac{1}{\lambda_0^{n}\hatphi_0(x)} \widehat{\calU}_n (f \hatphi_0)(x)
		= \sum_{y \in X}\frac{\duu_n(x,y)}{\lambda_0^{n}\hatphi_0(x)\phi_0(y)}f(y)\nu(y),
	\end{gather*}
	for all admissible functions $f:X \to \comp$. 
\end{definition}
The semigroups $\calQ_n$ and $\ducalQ_n$ are conservative and contractive on $\ell^p(\nu)$ for every $ p \in [1,\infty]$, i.e.\ 
\begin{align}\label{eq:contr} 
	\calQ_n \one_X 
	= \ducalQ_n \one_X 
	= \one_X 
	\quad\text{and}\quad 
	\Vert\calQ_n f\Vert_{\ell^p(\nu)}, \Vert \ducalQ_n f \Vert_{\ell^p(\nu)} 
	\leq \Vert f\Vert_{\ell^p(\nu)}, 
	\quad f \in \ell^p(\nu), \; n \in \nat_0.
\end{align}
This follows from the eigenequations for the operators $\calU_n, \widehat{\calU}_n$, Jensen's inequality and the fact that the measure $y\mapsto u_n(x,y)\nu(y)/(\lambda_0^n\phi_0(x)\widehat\phi_0(y))$ is a probability measure as $\calQ_n\one_X = \one_X$. It is convenient to use the following shorthand notation for the integral kernels
\begin{gather*}
	q_n(x,y)=\frac{u_n(x,y)}{\lambda_0^{n}\phi_0(x)\hatphi_0(y)} 
	\quad\text{and}\quad 
	\duq_n(x,y)=\frac{\duu_n(x,y)}{\lambda_0^{n}\hatphi_0(x)\phi_0(y)}.
\end{gather*}
Clearly, 
\begin{align}\label{eq:q-dual}
	\duq_n(x,y) = q_n(y,x),
\end{align}
and $\calQ_n, \ducalQ_n$ are  dual (i.e.\ adjoint) operators in $\ell^2(\nu)$ for every $n \in \nat$. Observe that $\nu$ is an invariant measure of the semigroups $\{\calQ_n: n\in \nat_0\}$ and $\{\ducalQ_n: n\in \nat_0\}$, i.e.\ 
\begin{gather*}
	\nu(\calQ_nf) = \nu(f) 
	\quad\text{and}\quad 
	\nu(\ducalQ_nf) = \nu(f), \quad n \in \nat_0, \; f \in \ell^p(\nu), \; p \in [1,\infty].
\end{gather*}

\subsection{Progressive and asymptotic intrinsic ultracontractivity} \label{subsec:paiuc}

Intrinsic ultracontractivity was originally introduced in \cite{Davies-Simon-JFA} as a regularity property of continuous-time semigroups of self-adjoint operators. More recent works \cite{Kaleta-Lorinczi-SPA, Kaleta-Lorinczi-AoP} studied a weaker, asymptotic version of this property, which holds for large times only. In the present paper we extend this notion to the case of possibly non-reversible  and not necessarily self-adjoint evolutions with discrete time and infinite discrete state spaces. 

\begin{definition}[\bfseries aIUC] \label{def:aiuc}
	The semigroup $\{ \calU_n : n\in \nat\}$ is \emph{asymptotically intrinsically ultracontractive} (aIUC,  for short) if the intrinsic semigroup $\{\calQ_n: n\in \nat_0\}$ is ultracontractive for large times, i.e.\ there exist some $n_0 \in \nat$ such that
	\begin{align}\label{def:IUC_bdd}
		\calQ_{n_0}: \ell^1(\nu) \longrightarrow \ell^{\infty}(\nu) \ \text{is a bounded  operator}.
	\end{align}
\end{definition}
Asymptotic intrinsic ultracontractivity can be equivalently  expressed by
\begin{align}\label{def:IUC_kernel}
	K_{n_0}:= \sup_{x,y \in X} q_{n_0} (x,y) < \infty.
\end{align}
Moreover, $K_{n_0} = \Vert\calQ_{n_0}\Vert_{\ell^1(\nu) \to \ell^{\infty}(\nu)}$. If we want to highlight the specific value of $n_0$, we write aIUC($n_0$). 

Clearly, these boundedness properties extend to larger  values of $n$: if \eqref{def:IUC_bdd} and \eqref{def:IUC_kernel} are true for some $n_0 \in \nat$, then they hold true for all $n \geq n_0$ because of the semigroup property. In fact,  we may rearrange \eqref{def:IUC_kernel}  and get
\begin{gather*}
	u_{n_0}(x,y) \leq K_{n_0} \phi_0 (x) \widehat{\phi}_0(y), \quad x,y\in X.
\end{gather*} 
Thus, it is immediately clear that
\begin{align}\label{def:IUC_kernel_new}
	u_{n}(x,y) \leq K_{n_0} \lambda_0^n \phi_0 (x) \widehat{\phi}_0(y), \quad x,y\in X, \; n \geq n_0.
\end{align}
From Theorem \ref{HK:estimates} we know that a similar lower bound for $u_{n}(x,y)$ is always true in the present setting. This means that aIUC is a very strong regularity property for the initial semigroup $\{ \calU_n : n\in \nat_0\}$ which implies a full factorization with respect to time and space. 

The aIUC property of the dual semigroup $\{\widehat{\calU}_n: n\in \nat_0\}$ is defined in the same way  using the dual semigroup $\{\ducalQ_n: n\in \nat_0\}$. By \eqref{eq:q-dual}, 
\begin{gather*}
	\Vert\ducalQ_{n_0}\Vert_{\ell^1(\nu) \to \ell^{\infty}(\nu)} 
	= \Vert\calQ_{n_0}\Vert_{\ell^1(\nu) \to \ell^{\infty}(\nu)}
\end{gather*}
and, therefore, 
\begin{align}\label{eq:equiv_aiuc}
	\{\calU_n : n\in \nat\} \text{\ \ is aIUC($n_0$)} \iff \{\widehat{\calU}_n: n\in \nat_0\} \text{\ \ is aIUC($n_0$)}.
\end{align}
Since $\nu$ is a finite measure on $X$, we have the natural inclusions
\begin{align} \label{eq:inclusions-1}
\ell^q(\nu) \subset \ell^p(\nu), \quad 1 \leq p \leq q \leq \infty.
\end{align}
Thus, both ${\calQ}_n$ and ${\ducalQ}_n$ are bounded  operators from $\ell^p(\nu)$ to $\ell^{\infty}(\nu)$, for any $p \geq 1$ and $n \geq n_0$.  Conversely, if the operators ${\calQ}_{n_0}, {\ducalQ}_{n_0} : \ell^2(\nu) \to \ell^{\infty}(\nu)$ are bounded, then, by duality, 
\begin{gather*}
	\Vert\ducalQ_{n_0}\Vert_{\ell^1(\nu) \to \ell^2(\nu)} = \Vert\calQ_{n_0}\Vert_{\ell^2(\nu) \to \ell^{\infty}(\nu)} < \infty
	\quad\text{and}\quad
	\Vert\calQ_{n_0}\Vert_{\ell^1(\nu) \to \ell^2(\nu)} = \Vert\ducalQ_{n_0}\Vert_{\ell^2(\nu) \to \ell^{\infty}(\nu)} < \infty,
\end{gather*}
which means that aIUC($2n_0$) holds. 

If a semigroup fails to be aIUC, it is usually very hard to describe its regularity and to control its large-time behaviour, including the heat kernel. In Theorem \ref{aIUC} below we give necessary and sufficient conditions for aIUC to hold. It shows that this property is very restrictive as it requires a sufficiently fast growth of the potential at infinity. 

In the DSP case, however, we can introduce the following weaker smoothing property, which arises from our general estimates obtained in Theorem \ref{HK:estimates}. We call this new condition \emph{progressive intrinsic ultracontractivity} (pIUC,  for short)  mimicking the definition for continuous time semigroups of self-adjoint operators in \cite{Kaleta-Schilling-JFA}.

\begin{definition}[\bfseries pIUC]\label{def:piuc}
	The semigroup $\{\calU_n : n\in \nat_0 \}$ is called \emph{progressively intrinsically ultracontractive} (pIUC, for short), if there exists an exhausting sequence of sets $(A_n)_{n\in \nat}$ in $X$,  some $n_0 \in \nat$, and a constant $c>0$ such that 
	\begin{align} \label{eq:pIUC}
		u_{n}(x,y)\leq c\lambda_0^n\phi_0(x){\hatphi}_0(y), \quad  n \geq n_0, \; x\in A_n, \; y \in X. 
	\end{align}
\end{definition}
The definition of pIUC of the dual semigroup $\{\widehat{\calU}_n: n\in \nat_0\}$ is analogous. If we want to highlight the specific value of $n_0$, we write pIUC($n_0$).

The constant $c$ appearing in \eqref{eq:pIUC} does not depend on $x,y \in X$ or $n \geq n_0$, which means that the factorization property extends to larger and larger sets $A_n$ with respect to $x$. In that sense, regularity of the semigroup improves progressively as $n \to \infty$. Indeed, \eqref{eq:pIUC} is equivalent to the property that
\begin{gather*}
	\calQ_{n}: \ell^1(X,\nu) \to \ell^{\infty}(A_n,\nu) \text{\ \ is a bounded operator for every $n\geq n_0$}
\end{gather*}
and
\begin{gather*}
	\sup_{n \geq n_0} \Vert\calQ_{n}\Vert_{\ell^1(X,\nu) \to \ell^{\infty}(A_n,\nu)} < \infty.
\end{gather*}
Observe that aIUC always implies pIUC: indeed, consider the trivial exhaustion $(A_n)$ such that $A_{n_0}= A_{n_0+1}=\ldots = X$. 

We are going to show that in the present setting pIUC \emph{always} holds for \emph{any} increasing potential -- no matter how slowly $V$ grows at infinity. This is one of the main differences between pIUC and aIUC. Of course, the structure of the exhausting sequence $(A_n)_{n\in \nat}$ and the speed of propagation will depend on $V$.

\begin{theorem} \label{th:pIUC}
	Assume \eqref{A1}--\eqref{A3} and \eqref{B}. Let $(B_n)_{n\in \nat}$ be an arbitrary exhausting sequence consisting of finite sets such that $V$ is increasing with respect to $(B_n)_{n\in \nat}$. Set $B_\infty := X$, let $x_0\in X$ be a fixed reference point and let $C$ be the constant from estimate \eqref{outer_estimate} in Theorem \ref{HK:estimates}. For an arbitrary finite set $D \supseteq \left\{z\in X: \lambda_0\,V(z) \leq C\right\}$ we define 
	\begin{gather*}
		D_n 
		= D \cup \left\{z\in X: \lambda_0\,V(z) \geq C\left(\frac{1}{P(z,x_0)P(x_0,z)}\right)^{\frac{1}{n-1}} \right\}, \quad n \geq 2,
	\intertext{and}
		A_1 = B_1, \quad A_n = B_{l(n)}, \quad n \geq 2,
	\intertext{where}
		l(n):= \sup \left\{k \in \nat: B_k \subseteq D_n \right\}, \qquad \left(\sup \emptyset := 1\right). 
	\end{gather*}
	Then $(A_n)_{n\in \nat}$ is an exhausting sequence and there exists a constant $c>0$ such that
	\begin{align} \label{eq:pIUC_xy}
		u_{n}(x,y)\leq c\lambda_0^n\phi_0(x){\hatphi}_0(y),\quad \text{whenever\ \ } x\in A_n \text{\ or\ } y\in A_n, \; n \in \nat. 
	\end{align}
	Equivalently, the semigroups $\{\calU_n : n\in \nat_0\}$ and $\{\widehat{\calU}_n: n\in \nat_0\}$ are pIUC with respect to $(A_n)_{n\in \nat}$. 
\end{theorem}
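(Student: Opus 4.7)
The plan is to apply Theorem~\ref{HK:estimates} with the exhausting sequence $(B_n)$ and show that the correction term $C^n F_n(x,y)$ in \eqref{outer_estimate} is dominated by $\lambda_0^n \phi_0(x)\hatphi_0(y)$ whenever $x\in A_n$ or $y\in A_n$, with a constant uniform in $n$. As a preliminary step I would verify that $(A_n)_{n\in\nat}$ exhausts $X$: by \eqref{A1} we have $P(z,x_0)P(x_0,z)>0$ for every $z$, whence $(1/(P(z,x_0)P(x_0,z)))^{1/(n-1)}\downarrow 1$ as $n\to\infty$. Therefore $D_n\uparrow D\cup\{z:\lambda_0 V(z)\geq C\}$, which equals $X$ since $D\supseteq\{z:\lambda_0 V(z)\leq C\}$. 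Each $B_k$ being finite, it eventually lies in $D_n$, so $l(n)\to\infty$ and $A_n=B_{l(n)}\uparrow X$.

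Next, I would fix any finite set $B\supseteq D\cup B_1$ and apply Theorem~\ref{HK:estimates} with this $B$. If $x\in B$ or $y\in B$, the estimate \eqref{eq:pIUC_xy} is immediate from \eqref{inner_estimate}, so the essential case is $x,y\in B^c$. The symmetry of \eqref{F-n-formula} in $x$ and $y$ lets me assume $x\in A_n$. Writing $\alpha(\cdot)$ for the first appearance function relative to $(B_n)$, the condition $x\in A_n=B_{l(n)}$ gives $\alpha(x)\leq l(n)$, and so $B_{\alpha(x)}\subseteq B_{l(n)}\subseteq D_n$. Every $z$ appearing in the sum defining $F_n(x,y)$ therefore belongs to $D_n\cap B^c\subseteq D_n\setminus D$, and the defining condition of $D_n$ yields the pointwise estimate $V(z)^{-(n-1)}\leq (\lambda_0/C)^{n-1}P(z,x_0)P(x_0,z)$. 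Substituting this into \eqref{F-n-formula} and using $P(z,x_0)P(x_0,z)=p(z,x_0)p(x_0,z)\mu(x_0)\mu(z)$, the problem reduces to bounding the sum $\sum_z [p(x,z)p(z,x_0)\mu(z)]\cdot[p(x_0,z)p(z,y)\mu(z)]$ by a constant multiple of $p(x,x_0)p(x_0,y)$.

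The heart of the proof --- and its main obstacle --- is precisely this sum estimate. A naive application of DSP to the full product $p(x,z)p(z,y)\mu(z)$ would produce the factor $p(x,y)$, which in general is \emph{not} bounded above by $p(x,x_0)p(x_0,y)$. The trick is to exploit the two extra factors $p(z,x_0)p(x_0,z)$ supplied by the definition of $D_n$: grouping the integrand into two DSP-friendly halves, the first admits the \emph{pointwise} bound $p(x,z)p(z,x_0)\mu(z)\leq p_2(x,x_0)\leq C_* p(x,x_0)$, while the second, upon summation in $z$, satisfies $\sum_z p(x_0,z)p(z,y)\mu(z)=p_2(x_0,y)\leq C_* p(x_0,y)$. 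Combining via the elementary inequality $\sum_z A_zB_z\leq (\sup_z A_z)\sum_z B_z$ yields the bound $C_*^2\,p(x,x_0)p(x_0,y)$. Reassembling all prefactors, $C^n F_n(x,y)$ is controlled by a constant multiple of $\lambda_0^{n-1} p(x,x_0)p(x_0,y)/V(x)$, which by \eqref{phi_0_bound}--\eqref{phi_hat_bound} is $\asymp \lambda_0^{n-1}\phi_0(x)\hatphi_0(y)$; absorbing $1/\lambda_0$ into a uniform constant delivers the required bound $c\lambda_0^n\phi_0(x)\hatphi_0(y)$. Together with Theorem~\ref{HK:estimates}\,\ref{HK:estimates-2} this establishes \eqref{eq:pIUC_xy} for $x\in A_n$, and the case $y\in A_n$ follows by the symmetry of \eqref{F-n-formula} (or by duality combined with \eqref{eq:q-dual}).
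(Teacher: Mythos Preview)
Your proof is correct and follows essentially the same route as the paper's: apply Theorem~\ref{HK:estimates} with a finite set $B\supseteq D\cup B_1$, observe that for $x\in A_n$ (or $y\in A_n$) the index set of $F_n$ lies in $D_n\setminus D$, substitute the defining inequality of $D_n$, and then split the resulting four-factor product via one \emph{pointwise} DSP bound $P(x,z)P(z,x_0)\leq C_*P(x,x_0)$ and one \emph{summed} DSP bound $\sum_z P(x_0,z)P(z,y)\leq C_*P(x_0,y)$ before invoking \eqref{phi_0_bound}--\eqref{phi_hat_bound}. The only cosmetic differences are that the paper works in terms of the probabilities $P$ rather than the densities $p$, and applies Theorem~\ref{HK:estimates} with the exhaustion $(A_n)$ itself rather than $(B_n)$; neither changes the substance of the argument.
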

\begin{proof}
Both $(B_n)_{n\in \nat}$ and $(D_n)_{n \geq 2}$ are exhausting  sequences in $X$, and all $B_n$'s are finite. Consequently, $l(n)$ increases to $\infty$ as $n \to \infty$, and $(A_n)_{n\in \nat}$ is exhausting as well. Furthermore, the monotonicity of $V$ with respect to $(A_n)_{n\in \nat}$ is inherited from $(B_n)_{n\in \nat}$.

By Theorem \ref{HK:estimates}, there is a constant $c_1>0$ such that for any finite set $B$ and for all $x,y\in X$,
\begin{align} \label{eq:aux_piuc}
	u_n(x,y) 
	\leq c_1\left( \lambda_0^n \phi_0(x)\widehat{\phi}_0(y) + \frac{C^n}{V(x)}\sum_{z\in B^c\cap A_{\alpha(x)}\cap A_{\alpha(y)}} p(x,z)p(z,y)\frac{\mu(z)}{V(z)^{n-1}}\right).
\end{align}
Set $B=A_1 \cup D$ and suppose that $x\in A_n$. If $n=1$, then the assertion follows trivially from \eqref{eq:aux_piuc} as the sum on the right hand side disappears. Assume that $n \geq 2$. Clearly, $A_{\alpha(x)}\subseteq A_n \subseteq D_n$. Hence, by the definition of the set $D_n$,
\begin{align*}
	u_n(x,y) 
	&\leq c_1\left( \lambda_0^n \phi_0(x)\widehat{\phi}_0(y) + \frac{C^n}{V(x)}\sum_{z\in B^c\cap A_n} p(x,z)p(z,y)\frac{1}{V(z)^{n-1}}\mu (z)\right)\\
	&\leq c_1\left( \lambda_0^n \phi_0(x)\widehat{\phi}_0(y) + \frac{C\,\lambda_0^{n-1}}{V(x)}\sum_{z\in B^c \cap D_n} P(x,z)P(z,x_0)P(x_0,z) P(z,y) \mu(y)^{-1}\right)\\
	&\leq c_1 \lambda_0^n \phi_0(x)\widehat{\phi}_0(y) + c_2\,\lambda_0^{n}\frac{p(x,x_0)}{V(x)}p(x_0,y) \leq c_3 \lambda_0^n \phi_0(x)\widehat{\phi}_0(y),
\end{align*}
where we use the DSP property \eqref{A1} first to get $P(x,z)P(z,x_0)\leq C_*P(x,x_0)$ and  again for  $\sum_{z} P(x_0,z)P(z,y)\leq C_*P(x_0,y)$; then we combine this estimate with \eqref{phi_0_bound} and \eqref{phi_hat_bound}. If $y \in A_n$, then $A_{\alpha(y)}\subseteq A_n$ and the  same argument can be used.  The proof is finished.
\end{proof}

We  will  now establish a necessary and sufficient condition for aIUC to hold. The form of this condition is suggested by the structure of the sets $A_n$ appearing in Theorem \ref{th:pIUC}. 

\begin{theorem}\label{aIUC}
	Assume \eqref{A1}--\eqref{A3} and \eqref{B}. Let $n_0 \in \left\{2,3,\dots\right\}$.  The following conditions are equivalent: 
	\begin{enumerate}
		\item\label{aIUC-i} the semigroup $\{\calU_n : n\in \nat_0\}$ is \textup{aIUC($n_0$)};
		\item\label{aIUC-ii} the semigroup $\{\widehat{\calU}_n: n\in \nat_0\}$ is \textup{aIUC($n_0$)};
		\item\label{aIUC-iii} for every fixed $x_0 \in X$ there is a constant $C=C(n_0, x_0)>0$ such that 
	\begin{gather}\label{eq:V}
		\frac{C}{V(x)^{n_0-1}}\leq  P(x,x_0)P(x_0,x), \quad x \in X.
	\end{gather}
\end{enumerate}
\end{theorem}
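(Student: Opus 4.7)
My plan is first to observe that \ref{aIUC-i}$\Leftrightarrow$\ref{aIUC-ii} is immediate from \eqref{eq:equiv_aiuc}, and then to establish \ref{aIUC-i}$\Leftrightarrow$\ref{aIUC-iii} by combining the heat-kernel estimates of Theorem~\ref{HK:estimates} with the ground-state asymptotics \eqref{phi_0_bound}--\eqref{phi_hat_bound}.

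For the implication \ref{aIUC-iii}$\Rightarrow$\ref{aIUC-i}, I would fix any finite $B\subset X$ and apply the upper bound in Theorem~\ref{HK:estimates}\ref{HK:estimates-2}, so that the task reduces to showing $F_{n_0}(x,y)\leq c'\lambda_0^{n_0}\phi_0(x)\widehat\phi_0(y)$ for $x,y\in B^c$. Into the expression \eqref{F-n-formula} for $F_{n_0}$ I substitute the pointwise inequality
\[
	\frac{1}{V(z)^{n_0-1}} \leq \frac{\mu(x_0)^2}{C}\,p(z,x_0)\,p(x_0,z),
\]
which is equivalent to \ref{aIUC-iii} via \eqref{eq:dens}, drop the restriction of the summation to $B^c\cap A_{\alpha(x)}\cap A_{\alpha(y)}$, and then mimic the final display in the proof of Theorem~\ref{th:pIUC}: I convert densities to transition probabilities, apply the pointwise consequence of DSP, namely $P(x,z)P(z,x_0)\leq P_2(x,x_0)\leq C_*P(x,x_0)$, followed by the summed DSP $\sum_z P(x_0,z)P(z,y)=P_2(x_0,y)\leq C_*P(x_0,y)$, and convert back using \eqref{phi_0_bound}--\eqref{phi_hat_bound}. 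The complementary case $x\in B$ or $y\in B$ is covered by Theorem~\ref{HK:estimates}\ref{HK:estimates-1}, so I obtain $u_{n_0}(x,y)\leq K\lambda_0^{n_0}\phi_0(x)\widehat\phi_0(y)$ uniformly on $X\times X$; validity for all $n\geq n_0$ then follows automatically from the semigroup property, as was already observed after \eqref{def:IUC_kernel_new}.

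The converse \ref{aIUC-i}$\Rightarrow$\ref{aIUC-iii} is more elementary. From the formula \eqref{eq:kernel_n} for $u_{n_0}(x,x)$ I retain only the degenerate path $z_1=\cdots=z_{n_0-1}=x$ and invoke uniform laziness \eqref{A2} in the form $p(x,x)\mu(x)=P(x,x)\geq C_-$ to produce the one-path lower bound
\[
	u_{n_0}(x,x) \;\geq\; \frac{C_{-}^{n_0-1}}{V(x)^{n_0}}.
\]
On the other hand, aIUC($n_0$) combined with the upper halves of \eqref{phi_0_bound}--\eqref{phi_hat_bound} gives $u_{n_0}(x,x)\leq K\lambda_0^{n_0}\phi_0(x)\widehat\phi_0(x)\leq K'\,P(x,x_0)P(x_0,x)/V(x)$. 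Comparing these two inequalities yields \eqref{eq:V} with a constant depending only on $x_0$ and $n_0$. I expect the main obstacle to be the bookkeeping for the DSP manipulations in the first direction -- in particular, spotting that DSP furnishes the pointwise bound $P(x,z)P(z,x_0)\leq C_*P(x,x_0)$ and then peeling off the two factors one at a time -- whereas the second direction falls out immediately once the degenerate single-path test is identified.
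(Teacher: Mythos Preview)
Your proposal is correct and follows essentially the same argument as the paper. Two small remarks. First, for \ref{aIUC-iii}$\Rightarrow$\ref{aIUC-i} the paper starts directly from the upper bound \eqref{eq:estm} of Proposition~\ref{thm:bounds-1}, which holds for all $x,y\in X$ and therefore avoids the case split through Theorem~\ref{HK:estimates}\ref{HK:estimates-1}--\ref{HK:estimates-2}; the DSP manipulation you describe is then exactly the paper's. Second, in \ref{aIUC-i}$\Rightarrow$\ref{aIUC-iii} your two displayed inequalities each drop a factor of $\mu(x)$: the one-path lower bound is $u_{n_0}(x,x)\geq C_{-}^{n_0-1}p(x,x)/V(x)^{n_0}$ (the last hop contributes a density, not a probability), and the aIUC upper bound via \eqref{phi_0_bound}--\eqref{phi_hat_bound} is $u_{n_0}(x,x)\leq K'\,P(x,x_0)P(x_0,x)/(V(x)\mu(x))$. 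These two slips cancel, so the conclusion is unaffected; the paper handles this by working with $u_{n_0}(x,x)\mu(x)$ throughout.
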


\begin{proof}
The equivalence of \ref{aIUC-i} and \ref{aIUC-ii} is just \eqref{eq:equiv_aiuc}. We are going to show \ref{aIUC-i}$\Rightarrow$\ref{aIUC-iii}. Fix an arbitrary $x_0 \in X$ and note that \ref{aIUC-i} and the estimates \eqref{phi_0_bound}, \eqref{phi_hat_bound} imply that there is a constant $c_1=c_1(n_0,x_0)>0$ such that
\begin{gather}\label{eq:rightside}
	u_{n_0}(x,x)\mu(x)\leq \frac{c_1}{V(x)} P(x,x_0)P(x_0,x), \quad x \in X. 
\end{gather}
On the other hand, we have
\begin{align*}
	u_{n_0}(x,x) \mu(x)
	&= \frac{1}{V(x)}\sum_{z_1,\ldots,z_{n_0-1}\in X} p(x,z_1)\frac{\mu(z_1)}{V(z_1)}p(z_1,z_2)\ldots\frac{\mu(z_{n_0-1})}{V(z_{n_0-1})}p(z_{n_0-1},x) \mu(x)\\
	&\geq \frac{(p(x,x)\mu(x))^{n_0}}{V(x)^{n_0}} \\
	&\geq \frac{C_{-}^{n_0}}{V(x)^{n_0}},
\end{align*}
where we use the assumption \eqref{A2}. Combining the above inequality with \eqref{eq:rightside} immediately gives \ref{aIUC-iii}. 

Now we show the implication \ref{aIUC-iii}$\Rightarrow$\ref{aIUC-i}. By \eqref{eq:estm}, there is a constant $c_2=c_2(n_0)>0$ such that 
\begin{gather*}
	u_{n_0}(x,y)
	\leq\frac{c_2}{V(x)}\sum_{z\in X}p(x,z)\frac{p(z,y)}{V(z)^{n_0-1}}\mu(z).
\end{gather*}
With \eqref{eq:V} and two applications of the DSP condition \eqref{A1} (similar to the last lines of the proof of Theorem \ref{th:pIUC}, but in the form \eqref{eq:DSP}), combined with \eqref{phi_0_bound} and \eqref{phi_hat_bound}, we get
\begin{align*}
	u_{n_0}(x,y)
	&\leq c_3\sum_{z\in X}\frac{p(x,z)p(z,x_0)}{V(x)}p(x_0,z)p(z,y)\mu(z)\\
	&\leq c_4\phi_0(x)\widehat{\phi}_0(y),
\end{align*}
for some constants $c_3, c_4>0$ depending only on $n_0$ and $x_0$.
\end{proof}

We will need a refined version of aIUC given by the \emph{asymptotic ground state domination} (aGSD, for short). 
\begin{definition} 
	 Let $\lambda_0$ and $\phi_0$ be the ground state eigenvalue and eigenfunction of the semigroup $\{\calU_n : n\in \nat_0\}$. The semigroup $\{\calU_n : n\in \nat_0\}$ is \emph{asymptotically ground state dominated}, aGSD($n_0$), if there exist $n_0 \in \nat$ and a constant $C=C(n_0)>0$ such that
	\begin{align}\label{eq:aGSD}
		\calU_{n_0}\one(x)\leq C \phi_0(x),\quad x\in X.
	\end{align}
\end{definition}
A similar definition applies for $\{\widehat{\calU}_n: n\in \nat_0\}$, with $\phi_0$ replaced by $\widehat{\phi}_0$.

Since $\phi_0$ is an eigenfunction, we can iterate aGSD($n_0$) and get
\begin{align}\label{eq:aGSD-ext}
	\calU_{n}\one(x)\leq \widetilde C \lambda_0^{n} \phi_0(x),\quad x\in X, \; n \geq n_0,
\end{align}
where $\widetilde C = C/\lambda_0^{n_0}$.   

\begin{proposition}\label{prop:aIUCandaGSD}
Assume \eqref{A1}--\eqref{A3} and \eqref{B}. Then the following implications hold:
\begin{enumerate}
	\item\label{prop:aIUCandaGSD-1} 
		\textup{aIUC($n_0$)} $\implies \{\calU_n : n\in \nat_0\}$ and $\{\widehat{\calU}_n: n\in \nat_0\}$ are \textup{aGSD($n_0$)};
	\item\label{prop:aIUCandaGSD-2} 
		$\{\calU_n : n\in \nat_0\}$ and $\{\widehat{\calU}_n: n\in \nat_0\}$ are \textup{aGSD($n_0$)} $\implies$ \textup{aIUC($2n_0+1$)}.
\end{enumerate}
\end{proposition}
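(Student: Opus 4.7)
\emph{Part \ref{prop:aIUCandaGSD-1}.} The plan is to integrate the pointwise kernel bound given by aIUC against $\mu$. By \eqref{def:IUC_kernel_new}, aIUC($n_0$) yields $u_{n_0}(x,y) \leq K_{n_0}\lambda_0^{n_0}\phi_0(x)\widehat\phi_0(y)$, so
\[
    \calU_{n_0}\one(x) = \sum_{y\in X} u_{n_0}(x,y)\mu(y) \leq K_{n_0}\lambda_0^{n_0}\phi_0(x)\sum_{y\in X}\widehat\phi_0(y)\mu(y).
\]
The remaining task is to see that $\sum_y \widehat\phi_0(y)\mu(y)$ is finite. Fix a reference point $x_0\in X$; then \eqref{phi_hat_bound} gives $\widehat\phi_0(y)\leq c\,\widehat p(y,x_0)$, while \eqref{eq:dens} together with the duality \eqref{cond:duality} yields $\widehat p(y,x_0)\mu(y)=p(x_0,y)\mu(y)=P(x_0,y)$. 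Summing and using sub-Markovianity of $P$ produces $\sum_y \widehat\phi_0(y)\mu(y) \leq c\sum_y P(x_0,y)\leq c$, which completes aGSD($n_0$) for $\{\calU_n\}$. For the dual semigroup the same argument works verbatim with $\phi_0$ and $\widehat\phi_0$ swapped: now one uses \eqref{phi_0_bound} to bound $\phi_0(y)\leq c\,p(y,x_0)/V(y)\leq (c/V_-)\,p(y,x_0)$, and the identity $p(y,x_0)\mu(y)=\widehat P(x_0,y)$ followed by sub-Markovianity of $\widehat P$ ensures $\sum_y \phi_0(y)\mu(y)<\infty$.

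\emph{Part \ref{prop:aIUCandaGSD-2}.} Here the idea is to split $u_{2n_0+1}(x,y)$ symmetrically around a central single-step kernel and absorb that step by the ultracontractivity afforded by \eqref{A3}. Write
\[
    u_{2n_0+1}(x,y) = \sum_{z_1,z_2\in X} u_{n_0}(x,z_1)\,u(z_1,z_2)\,u_{n_0}(z_2,y)\,\mu(z_1)\mu(z_2).
\]
Assumption \eqref{A3} combined with $V\geq V_->0$ forces $u(z_1,z_2)=p(z_1,z_2)/V(z_1)\leq M$ for some finite $M$. Factoring this supremum out of the double sum and using the duality identity $\sum_{z_2}u_{n_0}(z_2,y)\mu(z_2)=\widehat\calU_{n_0}\one(y)$ (since $\widehat u_n(y,z)=u_n(z,y)$), we obtain
\[
    u_{2n_0+1}(x,y)\leq M\,\calU_{n_0}\one(x)\,\widehat\calU_{n_0}\one(y).
\]
Now the aGSD hypotheses for both $\{\calU_n\}$ and $\{\widehat\calU_n\}$ immediately give $u_{2n_0+1}(x,y)\leq M C^2\,\phi_0(x)\widehat\phi_0(y)$, which is exactly aIUC($2n_0+1$) after dividing by $\lambda_0^{2n_0+1}\phi_0(x)\widehat\phi_0(y)$ in the form \eqref{def:IUC_kernel}.

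\emph{Main obstacle.} Neither step is especially subtle, but the most delicate point is the convergence of $\sum_y \widehat\phi_0(y)\mu(y)$ (and its dual counterpart) in part \ref{prop:aIUCandaGSD-1}: since $\mu$ need not be a finite measure and the $\ell^2(\mu)$-normalization \eqref{ass:normalization} does not by itself yield $\ell^1(\mu)$-integrability, one really has to invoke the comparison between the ground states and the one-step kernels \eqref{phi_0_bound}--\eqref{phi_hat_bound} together with the duality identity to reduce these sums to column-sums of (sub-)probability kernels.
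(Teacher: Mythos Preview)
Your proof is correct and follows essentially the same approach as the paper's own proof: in part \ref{prop:aIUCandaGSD-1} you integrate the aIUC kernel bound and use \eqref{phi_0_bound}--\eqref{phi_hat_bound} to obtain $\phi_0,\widehat\phi_0\in\ell^1(\mu)$, and in part \ref{prop:aIUCandaGSD-2} you split $u_{2n_0+1}$ as $u_{n_0}\ast u\ast u_{n_0}$ and bound the middle kernel via \eqref{A3} and \eqref{potential_lower_bound}. Your justification of $\widehat\phi_0\in\ell^1(\mu)$ via the identity $\widehat p(y,x_0)\mu(y)=P(x_0,y)$ is slightly more explicit than the paper's terse reference to \eqref{phi_0_bound}--\eqref{phi_hat_bound}, but the content is the same.
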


\begin{proof}
In order to show \ref{prop:aIUCandaGSD-1} we observe that by \eqref{phi_0_bound}, \eqref{phi_hat_bound} we have $\phi_0, \widehat{\phi}_0 \in \ell^1(\mu)$ and
\begin{gather*}
	\calU_{n_0}\one(x) 
	= \sum_{y\in X}u_{n_0}(x,y)\mu(y) \leq c \phi_0(x)\sum_{y\in X}\widehat{\phi}_0(y)\mu(y)
	\leq c \Vert \widehat{\phi}_0\Vert_{\ell^1(\mu)}\phi_0(x),\\
	\ducalU_{n_0}\one(x) 
	= \sum_{y\in X}u_{n_0}(y,x)\mu(y) \leq c \widehat{\phi}_0(x)\sum_{y\in X}\phi_0(y)\mu(y)
	\leq c \Vert \phi_0\Vert_{\ell^1(\mu)}\widehat{\phi}_0(x).
\end{gather*}

\noindent
For \ref{prop:aIUCandaGSD-2} we can use the semigroup property and \eqref{potential_lower_bound}, \eqref{p_sup_cond} and write
\begin{align*}
	u_{2n_0+1}(x,y)
	&\leq\sum_{z, w \in X}u_{n_0}(x,z)\mu(z)\frac{p(z,w)}{V(z)} u_{n_0}(w,y)\mu(w) \\
    &\leq c_1 \calU_{n_0} \, \one(x) \ducalU_{n_0}\one(y) \\
    &\leq c_2 \phi_0(x)\widehat{\phi}_0(y).
\qedhere
\end{align*}
\end{proof}

\subsection{Equivalence of asymptotic intrinsic ultra- and hypercontractivity}

Our next goal is to show that under the DSP property aIUC is equivalent to intrinsic hypercontractivity. This fact has already been observed in the case of continuous (in time and space) Feynman--Kac semigroups corresponding to L\'{e}vy generators in \cite{Kaleta-Kwasnicki-Lorinczi-JST} and \cite{Kaleta-Schilling-JFA}; in general this equivalence is not true. 

In the present purely discrete setting we use the following definition. 
\begin{definition} \label{def:ihc}
	The semigroup $\{\calU_n : n\in \nat_0\}$ is called \emph{intrinsically hypercontractive} (IHC,  for  short), if the corresponding intrinsic semigroup $\{\calQ_n : n\in \nat_0\}$ is hypercontractive, i.e.\ for every $1 < p < q < \infty$ 
	\begin{gather*}
		\text{there exists $n=n_{p,q} \in \nat$ such that $\calQ_{n}: \ell^p(X,\nu) \to \ell^{q}(X,\nu)$ is a bounded  operator.} 
	\end{gather*}
\end{definition}
It is easy to see that the boundedness of $\calQ_{n}$ extends to \emph{every} $n \geq n_{p,q}$. Using a standard duality argument, we see that $\widehat\calQ_n : (\ell^q)^*(X,\nu) \simeq \ell^{q'}(X,\nu)\to(\ell^p)^*(X,\nu)\simeq \ell^{p'}(X,\nu)$ where $1 < q' = \frac{q}{q-1} < \frac{p}{p-1}=p'<\infty$. Consequently, 
\begin{gather*}
	\{\calU_n : n\in \nat_0\} \text{\ \ is IHC} \iff \{\widehat{\calU}_n: n\in \nat_0\} \text{\ \ is IHC}.
\end{gather*}
Moreover, because of \eqref{eq:inclusions-1},
\begin{gather*}
	\text{aIUC} \implies \text{IHC}
\end{gather*}
and, using the fact that $\calQ_n$ and $\widehat\calQ_n$ are dual in  $\ell^2(X,\nu)$, 
IHC is equivalent to the following property:
\begin{gather*}
\text{for every $p \in (2,\infty)$ there is $n=n_p \in \nat$ such that ${\calQ}_{n}, {\ducalQ}_{n} : \ell^2(X,\nu) \to \ell^p(X,\nu)$ are bounded. }
\end{gather*}
Our next proposition provides a necessary condition for IHC. 

\begin{proposition}\label{th:IHC}
	Assume \eqref{A1}--\eqref{A3} and \eqref{B}. Let $p \in (2,\infty)$ and $n \in \nat$. If the operator $\calQ_n: \ell^2(\nu) \to \ell^p(\nu)$ is bounded, then there exists a constant $c=c(p)>0$ such that
	\begin{align}\label{cond:V-for-AIUC}
		\frac{c}{V(x)^{\frac{2pn}{p-2}}}\leq \phi_0(x)\hatphi_0(x)\mu(x),\quad x\in X.
	\end{align} 
\end{proposition}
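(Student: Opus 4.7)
The plan is to test the boundedness assumption against a suitably chosen one-point function, keep only the diagonal contribution on the target side, and then combine the resulting upper bound on $q_n(x,x)$ with a lower bound coming from the laziness hypothesis \eqref{A2}.

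Concretely, fix $x \in X$ and set $f = \nu(x)^{-1/2}\one_{\{x\}}$, so that $\|f\|_{\ell^2(\nu)} = 1$. A direct computation from the kernel representation in Definition~\ref{def-intrinsic} gives
\begin{gather*}
    \calQ_n f(y) = q_n(y,x)\, \nu(x)^{1/2}, \quad y \in X.
\end{gather*}
The assumed bound $\|\calQ_n\|_{\ell^2(\nu) \to \ell^p(\nu)} =: M < \infty$ then yields
\begin{gather*}
    \sum_{y \in X} q_n(y,x)^p \nu(y) \leq M^p\, \nu(x)^{-p/2}.
\end{gather*}
Dropping all terms except $y = x$ and using $\duq_n(x,x) = q_n(x,x)$ (cf.\ \eqref{eq:q-dual}) I obtain the pointwise upper bound
\begin{gather*}
    q_n(x,x) \leq M\, \nu(x)^{-(p+2)/(2p)}.
\end{gather*}

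Next, for the matching lower bound on $q_n(x,x)$, I would keep only the diagonal term $z_1 = \cdots = z_{n-1} = x$ in formula \eqref{eq:kernel_n}, obtaining
\begin{gather*}
    u_n(x,x) \geq \frac{(p(x,x)\mu(x))^n}{\mu(x)\,V(x)^n} = \frac{P(x,x)^n}{\mu(x)\, V(x)^n} \geq \frac{C_{-}^n}{\mu(x)\, V(x)^n}
\end{gather*}
by assumption \eqref{A2}. Dividing by $\lambda_0^n \phi_0(x)\hatphi_0(x)$ and recalling \eqref{eq:v-measure} gives
\begin{gather*}
    q_n(x,x) \geq \frac{C_{-}^n}{\lambda_0^n\, \nu(x)\, V(x)^n}.
\end{gather*}

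Combining the two estimates yields $C_{-}^n / (\lambda_0^n M\, V(x)^n) \leq \nu(x)^{(p-2)/(2p)}$, and raising both sides to the power $2p/(p-2)$ produces the claimed inequality with $c = (C_{-}^n/(\lambda_0^n M))^{2p/(p-2)}$, which depends only on $p$ (and on $n$, $\lambda_0$, $C_{-}$, $M$). The only step requiring a little care is the lower bound for $u_n(x,x)$; everything else is a one-line application of the definitions and of the duality between singletons and point evaluations in $\ell^p$, so I don't anticipate a real obstacle here.
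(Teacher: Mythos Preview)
Your proof is correct and follows essentially the same approach as the paper's: test the $\ell^2\to\ell^p$ bound against the indicator $\one_{\{x\}}$, discard all but the diagonal term $y=x$ to extract an upper bound on $q_n(x,x)$ (equivalently on $u_n(x,x)\mu(x)$), and match it against the lower bound coming from laziness \eqref{A2}. The only cosmetic differences are that the paper works with $u_n$ rather than $q_n$ and routes the lower bound through Proposition~\ref{thm:bounds-1} before specializing to the diagonal, whereas you take the single diagonal term in \eqref{eq:kernel_n} directly; your version is slightly more streamlined but not materially different.
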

\begin{proof}
	We fix $p\in (2,\infty)$ and $n\geq 1$ such that the operator $\calQ_n: \ell^2(\nu) \to \ell^p(\nu)$ is bounded. If we evaluate $\calQ_n$ at the function $y\mapsto\one_{\{x\}}(y)$
	we obtain that for a constant $c_1>0$
	\begin{align}\label{eq:p-norm-1}
		\sum_{y \in X}|\calQ_n\,\one_{\{x\}}(y)|^p\nu(y) 
		= \Vert \calQ_n\,\one_{\{x\}} \Vert_{\ell^p(\nu)}^p 
		\leq c_1(\phi_0(x)\hatphi_0(x)\mu(x))^{\frac{p}{2}}.
	\end{align}
	By the definition of $\calQ_n$ and the fact that the expression under the sum is nonnegative, the left-hand side of \eqref{eq:p-norm-1} is equal to 
	\begin{align*}
		\sum_{y\in X} \left|\sum_{z \in X}\frac{u_n(y,z)}{\lambda_0^{n}\phi_0(y)\hatphi_0(z)}\,\one_{\{x\}}(z)\nu(z)\right|^p\nu(y)
		=\sum_{y \in X}\left(\frac{u_n(y,x)}{\lambda_0^{n}\phi_0(y)\hatphi_0(x)}\,\nu(x)\right)^p\nu(y).
	\end{align*} 
	If we keep in the sum only the term with $y=x$, then we obtain
	\begin{align*}
		\sum_{y \in X}\left(\frac{u_n(y,x)}{\lambda_0^n\phi_0(y)\hatphi_0(x)}\nu(x)\right)^p\nu(y) 
		&\geq \left(\frac{u_n(x,x)}{\lambda_0^n\phi_0(x)\hatphi_0(x)}\nu(x)\right)^p\nu(x)\\
		&=\left(\frac{u_n(x,x)}{\lambda_0^n}\right)^p\phi_0(x)\hatphi(x)\mu(x)^{p+1},
	\end{align*}
	where the equality follows from the definition of the measure $\nu$. Combining this with \eqref{eq:p-norm-1} yields
	\begin{gather*}
		\left(\frac{u_n(x,x)}{\lambda_0^n}\right)^p\phi_0(x)\hatphi(x)\mu(x)^{p+1}
		\leq c_1(\phi_0(x)\hatphi_0(x)\mu(x))^{\frac{p}{2}}.
	\end{gather*}
	If we set $c_2=\lambda_0^{np}\,c_1$, then
	\begin{gather*}
		u_n(x,x)^p\leq c_2(\phi_0(x)\hatphi_0(x))^{\frac{p}{2}-1}\mu(x)^{-\frac{p}{2}-1},
	\intertext{which implies}
		u_n(x,x)\mu(x)\leq c_2^{\frac{1}{p}}(\phi_0(x)\hatphi_0(x)\mu(x))^{\frac{1}{2}-\frac{1}{p}}.
	\end{gather*}
	The estimate \eqref{u_n-lower-bound} together with the assumption \eqref{A2} imply that there is a constant $c_3>0$ such that
	\begin{align*}
		u_n(x,x)\mu(x) 
		&\geq \frac{c_3^{n-2}}{V(x)}\sum_{y\in X}p(x,y)\frac{\mu(y)}{V(y)^{n-1}}p(y,x)\mu(x)\\
		&\geq \frac{c_3^{n-2}}{V(x)^n}\big(p(x,x)\mu(x)\big)^2
		\geq \frac{c_3^{n-2}C_-^2}{V(x)^n}.
	\end{align*}
	Finally, we conclude that
	\begin{gather*}
		\frac{1}{V(x)^{n\cdot\frac{2p}{p-2}}} 
		\leq c_4\phi_0(x)\hatphi_0(x)\mu(x),
	\end{gather*}
	for some $c_4=c_4(n,p)>0$ and all $x\in X$. This completes the proof.
\end{proof}

We can now derive the final result of this section.

\begin{theorem} \label{thm:aiuc-ihp}
	Under \eqref{A1}--\eqref{A3} and \eqref{B} the aIUC and IHC properties are equivalent.
\end{theorem}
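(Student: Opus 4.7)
The implication aIUC $\Rightarrow$ IHC is essentially free of work and was already recorded in the discussion preceding the theorem: since $\nu$ is a finite measure on $X$, we have $\ell^{\infty}(\nu)\subset\ell^{q}(\nu)$ for every $q\in[1,\infty)$ and, by the definition of aIUC, some $\calQ_{n_0}$ maps $\ell^{1}(\nu)$ into $\ell^{\infty}(\nu)$; composing with further steps and interpolating with $\ell^{p}(\nu)\subset\ell^{1}(\nu)$ yields IHC. So the real content is the converse, and my plan is to close the loop by chaining the two major results of the section: Proposition \ref{th:IHC} (a necessary lower bound on $\phi_0\hatphi_0\mu$ in terms of $V$) and Theorem \ref{aIUC} (a necessary and sufficient condition for aIUC in terms of $V$, $P$, $\Phat$).

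Assuming IHC, I would first use the duality remark made right after Definition \ref{def:ihc} to extract some exponent $p\in(2,\infty)$ and some $n\in\nat$ for which $\calQ_{n}\colon\ell^{2}(\nu)\to\ell^{p}(\nu)$ is bounded. Feeding this into Proposition \ref{th:IHC} gives, with $\beta:=2pn/(p-2)$,
\begin{equation*}
\frac{c_1}{V(x)^{\beta}}\leq \phi_0(x)\hatphi_0(x)\mu(x),\qquad x\in X.
\end{equation*}
Next I would translate the right-hand side into transition probabilities. Fix a reference point $x_0\in X$. Combining \eqref{phi_0_bound}, \eqref{phi_hat_bound} with the identities $p(x,x_0)=P(x,x_0)/\mu(x_0)$ and $\hatphi_0(x)\asymp\widehat p(x,x_0)=p(x_0,x)=P(x_0,x)/\mu(x)$, one obtains the pointwise comparison
\begin{equation*}
\phi_0(x)\hatphi_0(x)\mu(x)\asymp \frac{P(x,x_0)P(x_0,x)}{V(x)},\qquad x\in X,
\end{equation*}
with constants depending only on $x_0$. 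Substituting this into the displayed lower bound produces
\begin{equation*}
\frac{c_2}{V(x)^{\beta-1}}\leq P(x,x_0)P(x_0,x),\qquad x\in X.
\end{equation*}

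To finish, I need to convert the (generally non-integer) exponent $\beta-1$ into the integer exponent $n_0-1$ demanded by \eqref{eq:V}. Pick any integer $n_0\geq 2$ with $n_0-1\geq \beta-1$. Because $V$ is confining \eqref{B}, the set $E:=\{x\in X:V(x)<1\}$ is finite; on $X\setminus E$ we have $V(x)\geq 1$, hence $V(x)^{n_0-1}\geq V(x)^{\beta-1}$, so the required inequality is preserved with the same constant. On the finite set $E$, the product $P(x,x_0)P(x_0,x)$ is uniformly positive by \eqref{A1} while $V(x)^{-(n_0-1)}\leq V_-^{-(n_0-1)}$ is uniformly bounded, so the inequality holds after enlarging the constant. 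This gives \eqref{eq:V} for the integer $n_0$, and Theorem \ref{aIUC} yields aIUC($n_0$).

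There is no genuine obstacle here beyond bookkeeping: both non-trivial inputs are already in place, and the one delicate step is precisely the handling of the finite exceptional set $\{V<1\}$, which is taken care of automatically by the confining hypothesis \eqref{B} together with \eqref{A1}.
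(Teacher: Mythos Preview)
Your proof is correct and follows exactly the same route as the paper's own argument: apply Proposition~\ref{th:IHC} to obtain the lower bound $V^{-\beta}\lesssim\phi_0\hatphi_0\mu$, translate the right-hand side via \eqref{phi_0_bound}--\eqref{phi_hat_bound} into $P(x,x_0)P(x_0,x)/V(x)$, and invoke Theorem~\ref{aIUC}. The paper compresses these steps into one sentence, while you spell out the pointwise comparison and the exponent-rounding (including the handling of the finite set $\{V<1\}$), but there is no difference in substance.
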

\begin{proof}
	It is clearly enough to show that IHC implies aIUC. This is now easy, as by Proposition \ref{th:IHC} the IHC property implies \eqref{cond:V-for-AIUC}, for some $p >2$ and $n \in \nat$. In view of the estimates \eqref{phi_0_bound}, \eqref{phi_hat_bound} and Theorem \ref{aIUC}, this leads to aIUC($n_0$) for sufficiently large $n_0 \in \nat$.
\end{proof}

\section{Nearest-neighbour random walks}\label{sec:NNRW}
In this section we consider finite range nearest-neighbour random walks on a graph with vertex set $X$. We obtain general sufficient conditions under which the associated Feynman--Kac semigroups do \textbf{not  enjoy}   the aIUC or IHC properties.  

We start by defining a graph structure over the state space $X$. The graph $G=(X,E)$ over $X$ (the points in $X$ form the set of \emph{vertices}) is defined by specifying the set of \emph{edges} $E \subset \left\{\{x,y\}: x,y \in X \right\}$: Two vertices $x, y \in X$ are connected by an edge in $G$ if, and only if, $\{x,y\} \in E$. In this case we call $x$ and $y$ \emph{neighbours} and write $x \sim y$; note that $\{x,y\} = \{y,x\}$, so $x\sim y\iff y\sim x$. 
The graph $G$
is of \emph{finite geometry} if $\# \left\{y \in X: x \sim y \right\} < \infty$, for all $x \in X$, i.e.\ the number of neighbours of an arbitrary vertex $x \in X$ is finite. The graph $G$ is \emph{connected}, if for all $x,y \in X$, $x \neq y$, there exists a sequence $\left\{x_i\right\}_{i=0}^n \subset X$ with $x_0=x$, $x_n=y$ such that $x_{i-1} \sim x_i,$ for $i = 1,\ldots,n$. This means that any two different vertices $x$ and $y$ are connected by a path in $G$. The \emph{length} of the path is the number of edges belonging to that path. Every shortest path connecting two different vertices $x$ and $y$ is called a \emph{geodesic path} between $x$ and $y$. For the rest of this section we assume that $G$ is a connected graph of finite geometry.
The  fact  that $G$ is a connected graph allows us to define the natural \emph{graph} (\emph{geodesic}) \emph{distance} $d$ in $X$.
More precisely, $d(x,y)$ is the length of the geodesic path connecting $x$ and $y$. As $G$ is of  finite geometry, every open geodesic ball $B_r(x) = \left\{y \in X: d(x,y) < r\right\}$ is finite, and since $X$ is infinite, the metric space $(X,d)$ is unbounded. 

We consider a probability kernel $P:X \times X \to [0,\infty)$ such that for every two vertices $x, y \in X$,
\begin{align} \label{eq:nnrw_def}
	P(x,y)>0 \iff x \sim y .
\end{align}
The corresponding $P$-Markov chain is called a \emph{nearest-neighbour random walk} on the graph $G$. We will need an additional regularity assumption on the kernel $P(x,y)$, which coincides with the so-called $p_0$-condition imposed in \cite{Grigoryan-Telcs} (cf.\ \cite[Definition 2.1.1]{Kumagai}):
\begin{align} \label{eq:nnrw_add_ass} 
	\inf \left\{P(x,y): x,y \in X, \ x \sim y\right\} > 0.
\end{align}
We also assume that there exists a probability kernel $\Phat \colon X \times X \to [0,1]$ and a measure $\mu:X \to (0,\infty)$ such that the duality relation \eqref{cond:duality} holds.

In this section, we restrict our attention to the class of isotropic and increasing potentials $V$, i.e.\ we assume that there exists some $x_0 \in X$ such that 
\begin{align} \label{eq:V_isotropic} 
	V(x) = V(y), \text{\ \ if}\ d(x,x_0) = d(y,x_0),
	\quad\text{and}\quad 
	V(x) \geq V(y), \text{\ \ if}\ d(x,x_0) \geq d(y,x_0).
\end{align}

We are going to show that under \eqref{eq:nnrw_add_ass} a nearest-neighbour random walk does not enjoy the aIUC property. 
\begin{proposition} \label{prop:noaiuc}
	 Let $(X,E)$ be a  connected graph of finite geometry and $P$ a transition kernel as described above. Denote 
	by $\{\calU_n: n\in\nat_0\}$ the Feynman--Kac semigroup where $V$ is a confining potential satisfying \eqref{eq:V_isotropic}. If the ground state of the dual semigroup $\widehat{\phi}_0\in \ell^1(X,\mu)$, and if $P$ satisfies \eqref{eq:nnrw_add_ass}, then $\{\calU_n: n\in\nat_0\}$ does not have the aIUC property. 
\end{proposition}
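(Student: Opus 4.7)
The plan is to argue by contradiction and assume aIUC($n_0$) holds for some $n_0$. The first step is a contraction of the aIUC bound against the invariant dual measure: summing the estimate $u_{n_0}(x,y) \leq K\lambda_0^{n_0}\phi_0(x)\widehat{\phi}_0(y)$ in $y$ weighted by $\mu(y)$ and using the hypothesis $\widehat{\phi}_0 \in \ell^1(\mu)$ gives the asymptotic ground-state domination
\begin{equation*}
\calU_{n_0}\one(x) \leq C_1 \phi_0(x), \quad x \in X,
\end{equation*}
with $C_1 = K\lambda_0^{n_0}\|\widehat\phi_0\|_{\ell^1(\mu)}$; this is the same mechanism as in the proof of Proposition \ref{prop:aIUCandaGSD}\,\ref{prop:aIUCandaGSD-1} and is the only place where $\widehat\phi_0\in\ell^1(\mu)$ enters.

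I would then establish a super-exponential upper bound on $\phi_0$ from the eigenequation. Starting from $\sum_y P(x,y)\phi_0(y) = \lambda_0 V(x)\phi_0(x)$ together with $\sum_y P(x,y) \leq 1$ and the nearest-neighbour support of $P$, one obtains $\phi_0(x) \leq \max_{y \sim x}\phi_0(y)/(\lambda_0 V(x))$. Writing $M(r) := \sup_{d(z,x_0)=r}\phi_0(z)$ and invoking isotropy $V\vert_{d(\cdot,x_0)=r}=v_r$, this becomes $M(r) \leq (M(r-1)\vee M(r)\vee M(r+1))/(\lambda_0 v_r)$. Since $V$ is confining, $\lambda_0 v_r > 1$ for $r \geq R$, ruling out a local maximum of $M$ at $r$; the alternative $M(r) \leq M(r+1)/(\lambda_0 v_r)$ would entail $M(r+k)\geq M(r)\prod_{j=0}^{k-1}\lambda_0 v_{r+j}\to\infty$ upon iteration, contradicting the $\ell^\infty$ boundedness of $\phi_0$. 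Hence $M(r) \leq M(r-1)/(\lambda_0 v_r)$ for every $r \geq R$, which iterates to $\phi_0(x) \leq M(R)/\prod_{k=R+1}^r (\lambda_0 v_k)$ whenever $d(x,x_0) = r \geq R$.

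The third ingredient is a competing lower bound on $\calU_{n_0}\one(x)$ from the nearest-neighbour structure. For $x$ with $r := d(x,x_0) \geq n_0$ I pick a geodesic $x = Y_0 \sim Y_1 \sim \cdots \sim Y_{n_0}$ with $d(Y_k, x_0) = r-k$, which exists by connectedness. Along it $V(Y_k) = v_{r-k}$ and the path probability is at least $P_{\min}^{n_0}$, with $P_{\min}>0$ from \eqref{eq:nnrw_add_ass}, so inserting this single path into $\calU_{n_0}\one(x) = \Ee^x\left[\prod_{k=0}^{n_0-1}V(Y_k)^{-1}\right]$ yields $\calU_{n_0}\one(x) \geq P_{\min}^{n_0}/\prod_{j=r-n_0+1}^r v_j$.

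Combining the three displays gives $\lambda_0^{r-R}\prod_{k=R+1}^{r-n_0}v_k \leq C_2$ for some constant $C_2$ independent of $r \geq R+n_0$; on the logarithmic level this is the claim that $\sum_{k=R+1}^{r-n_0}\log v_k - (r-R)\log(1/\lambda_0)$ is bounded above. The confining property forces $\log v_k \to \infty$, so a Cesaro-type argument makes the partial sums grow faster than any linear function of $r$, yielding the desired contradiction and ruling out aIUC($n_0$) for every $n_0$. The main obstacle is the dichotomy in the recursion for $M(r)$: excluding the ``upward'' alternative requires the $\ell^\infty$ boundedness of $\phi_0$, which is where the paper's standing ultracontractivity assumption \eqref{A3} enters (applicable here because $P\leq 1$ and $\mu$ is assumed strictly positive). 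The other two ingredients are comparatively soft and rely only on \eqref{eq:nnrw_add_ass} together with the isotropy/monotonicity of $V$.
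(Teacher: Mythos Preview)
Your argument follows the same skeleton as the paper's proof: (i) aIUC together with $\widehat\phi_0\in\ell^1(\mu)$ yields aGSD, (ii) a path-product upper bound $\phi_0(x)\le c\prod_{k=0}^{n}V(x_k)^{-1}$ along any geodesic from $x_0$ to $x$, (iii) a matching lower bound on $\calU_{n_0}\one(x)$ obtained by keeping a single inward geodesic path and using \eqref{eq:nnrw_add_ass}, and (iv) the resulting inequality $\prod_{k=0}^{n-n_0}V(x_k)^{-1}\ge c>0$ contradicts the confining assumption as $n\to\infty$. The paper does exactly this, citing \cite[Theorem~2.7]{CKS-ALEA} for step (ii), whereas you reprove that bound directly from the eigenequation via your dichotomy for $M(r)$. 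Your iteration works: once the ``upward'' branch $M(r)\le M(r+1)/(\lambda_0 v_r)$ occurs it propagates, since $M(r+1)>M(r)$ forces the same branch at $r+1$; this point deserves one extra sentence but is correct.

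The one genuine slip is your justification for $\phi_0\in\ell^\infty$. You invoke \eqref{A3}, arguing it holds because ``$P\le 1$ and $\mu$ is assumed strictly positive''. But \eqref{A3} requires $\sup_{x,y}P(x,y)/\mu(y)<\infty$, i.e.\ $\inf_y\mu(y)>0$, which does not follow from $\mu(y)>0$ pointwise; moreover the proposition does not list \eqref{A3} among its hypotheses. The paper sidesteps this by quoting the harmonic-function estimate from \cite{CKS-ALEA} directly, which delivers the path bound under the stated nearest-neighbour assumptions without appealing to ultracontractivity. If you want a self-contained proof, you should either add the mild hypothesis $\inf_x\mu(x)>0$ (natural and satisfied in the standard conductance models), or observe that the ``upward'' branch can also be excluded using only $\phi_0\in\ell^2(\mu)$ provided one has a lower bound on $\mu$ along the geodesic; otherwise cite the external result as the paper does.
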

\begin{proof}
	Suppose, to the contrary, that there exist $n_0 \in \nat$ and a constant $c=c(n_0)>0$ such that
	\begin{gather}\label{AIUC-local}
		u_{n_0}(x,y) \leq c\, \phi_0(x)\,\widehat{\phi}_0(y),\quad x,y\in X.
	\end{gather}
	Since $\lambda_0^{-1}\calU\phi_0 = \phi_0$, the ground state $\phi_0$ is harmonic with respect to the operator $\calU^{\lambda_0} - \id:= \lambda_0^{-1}\calU - \id$ (this amounts to  changing the potential $V\rightsquigarrow \lambda_0^{-1}V$), we can apply \cite[Theorem 2.7]{CKS-ALEA}. Together with \eqref{AIUC-local} and  the assumption $\widehat{\phi}_0\in \ell^1(\mu)$ we see that for some $c_1>0$,	
	\begin{gather}\label{eq:contr1}
		\calU_{n_0}\one(x)  
		= \sum_{y\in X}u_{n_0}(x,y)\mu (y) 
		\leq c\Vert \widehat{\phi}_0\Vert_{\ell^1 (\mu)}\phi_0(x) 
		\leq c_1 \prod_{k=0}^n \frac{1}{V(x_k)},
	\end{gather}
	for any path $x_0\to \dots \to x_n=x$ that connects a fixed reference point $x_0\in X$ with $x$. By \eqref{eq:kernel_n} we obtain for a constant $c_2>0$, 
	\begin{align*}
		\calU_{n_0}\one(x) 
		&= \sum_{y\in X} \sum_{z_1,\ldots,z_{n_0-1}\in X} \frac{p(x,z_{1})\mu (z_1)}{V(x)V(z_1)}\cdot \ldots \cdot
			\frac{p(z_{n_0-2},z_{n_0-1})\mu(z_{n_0-1})}{V(z_{n_0-1})} p(z_{n_0-1},y)\mu(y) \\
		&=\sum_{z_1,\ldots,z_{n_0-1}\in X} \frac{p(x,z_{1})\mu (z_1)}{V(x)V(z_1)}\cdot \ldots \cdot
			\frac{p(z_{n_0-2},z_{n_0-1})}{V(z_{n_0-1})} \mu(z_{n_0-1}) \\
		&\geq \frac{p(x,x_{n-1}) \mu(x_{n-1})}{V(x)V(x_{n-1})}\cdot \ldots \cdot
			\frac{p(x_{n-n_0+2},x_{n-n_0+1})\mu(x_{n-n_0+1})}{V(x_{n-n_0+1})}\\
		&\geq c_2^{n_0-1}\prod_{k=n-n_0+1}^{n}\frac{1}{V(x_k)}, 
	\end{align*}
	where the second equality follows  from  Tonelli's theorem and the fact that $P$ is a probability kernel, the first inequality is just a reduction of the sum to the path $z_1=x_{n-1},\ldots ,z_{n_0 -1} = x_{n-n_0+1}$, and the last inequality follows with \eqref{eq:nnrw_add_ass}. Combining \eqref{eq:contr1} with the inequality above yields
	\begin{gather*}
		0<\frac{c_2^{n_0-1}}{c_1}\leq  \prod_{k=0}^{n-n_0}\frac{1}{V(x_k)}.
	\end{gather*}
	If we let $n$ tend to infinity, we arrive at a contradiction as $V$ is a confining potential which  means that  the right-hand side converges to zero. 
\end{proof}

Our next aim is to show that for some class of discrete Feynman--Kac semigroups related to nearest-neighbour random walks satisfying \eqref{eq:nnrw_add_ass} the IHC property typically fails to hold. We first observe that the condition \eqref{eq:V_isotropic} can be equivalently stated in the following way: There exist $x_0 \in X$ and an increasing profile function $W:\nat_0 \to (0,\infty)$ such that $V(x) = W(d(x_0,x))$, for any $x \in X$. We will assume that
\begin{gather}\label{asm:cmp}
	W(k) \asymp W(k+1)\text{\ \ for large $k$, and\ \ } \lim_{k\to\infty} W(k) = \infty,
\end{gather} 
which in turn implies that $V$ is confining.  We will also need the condition that the measure $\mu$ is bounded, that is
\begin{gather}\label{walk_mu}
	\sup_{x \in X} \mu(x)<\infty.
\end{gather}

\begin{proposition} \label{prop:noihc}
	 Let $(X,E)$ be a  connected graph of finite geometry and $P$ a transition kernel as described above. 
	Under the assumptions \eqref{eq:nnrw_add_ass}, \eqref{asm:cmp} and \eqref{walk_mu}, the Feynman--Kac semigroup $\{\calU_n:n\in \nat_0\}$ related to the kernel $P$ does not enjoy the IHC property.
\end{proposition}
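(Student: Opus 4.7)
I would follow the scheme of the proof of Proposition \ref{prop:noaiuc}, replacing the aIUC-based starting point by the $\ell^2\to\ell^4$ test used in Proposition \ref{th:IHC}. The first half of the proof of Proposition \ref{th:IHC}, which produces the bound
\[
u_n(x,x)\,\mu(x) \leq c\,\nu(x)^{1/2 - 1/q},\qquad x \in X,
\]
whenever $\calQ_n\colon \ell^2(\nu) \to \ell^q(\nu)$ is bounded, uses only the definition of $\calQ_n$ applied to the test function $\one_{\{x\}}$; the DSP hypothesis \eqref{A1} enters only in the \emph{lower} bound on $u_n(x,x)$ that follows. Assuming IHC and specializing to the pair $(p,q)=(2,4)$, I would obtain some $n_0$ with $\calQ_{n_0}\colon \ell^2(\nu) \to \ell^4(\nu)$ bounded. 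Since each $\calQ_m$ is an $\ell^2(\nu)$-contraction, the same property remains true for every $n \geq n_0$, so I may replace $n_0$ by any convenient even integer $n \geq 2$, yielding
\[
u_n(x,x)\,\mu(x) \leq c_1\,\nu(x)^{1/4},\qquad x \in X.
\]

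\textbf{Competing bounds.} I would then contrast this with two NNRW-specific estimates. For $x$ with $m := d(x_0,x) \geq 1$, pick a neighbour $y$ of $x$ lying on a geodesic from $x$ to $x_0$, so that by isotropy \eqref{eq:V_isotropic} $V(y) \leq V(x) = W(m)$. Since $n$ is even, the closed ``bouncing'' walk $x \to y \to x \to y \to \dots \to x$ of length $n$ contributes
\[
c_0^{\,n}\,\bigl(V(x)V(y)\bigr)^{-n/2} \;\geq\; c_0^{\,n}\,W(m)^{-n}
\]
to the path-sum representation of $u_n(x,x)\,\mu(x)$, where $c_0 := \inf\{P(a,b): a\sim b\} > 0$ by \eqref{eq:nnrw_add_ass}. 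Combining with the previous display yields $\nu(x) \geq c_2\,W(m)^{-4n}$. On the other hand, applying \cite[Theorem~2.7]{CKS-ALEA} to the positive $(\calU^{\lambda_0} - \id)$-harmonic function $\phi_0$ (and analogously to $\widehat{\phi}_0$) along a geodesic from $x_0$ to $x$, together with \eqref{walk_mu}, gives
\[
\nu(x) = \phi_0(x)\,\widehat{\phi}_0(x)\,\mu(x) \leq c_3 \prod_{k=0}^{m} W(k)^{-2}.
\]

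\textbf{Contradiction.} Combining the lower and upper bounds on $\nu(x)$ and taking logarithms produces
\[
2\sum_{k=0}^{m} \log W(k) \leq c_4 + 4n\,\log W(m)\qquad\text{for all }m \geq 1.
\]
The assumption \eqref{asm:cmp} forces the differences $\log W(k+1) - \log W(k)$ to be bounded for large $k$, whence $\log W(m) = O(m)$; meanwhile $W(k)\to\infty$ implies that for every $M > 0$ one has $\log W(k) \geq M$ for all sufficiently large $k$, so $\sum_{k=0}^{m} \log W(k) \geq (m - K_M)\,M$ for $m \geq K_M$. Choosing $M$ substantially larger than $2n$ times the constant governing $\log W(m) = O(m)$ then forces the left-hand side to dominate the right-hand side for $m$ large, giving the required contradiction. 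The main obstacle I expect is the bookkeeping in this last step: one has to check that \eqref{asm:cmp} provides exactly the right balance --- it rules out faster-than-exponential growth of $W$, which would destroy the argument, while the confining condition is the only input needed to make the ratio $\sum_{k=0}^{m} \log W(k)/\log W(m)$ genuinely diverge.
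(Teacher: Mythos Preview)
Your proposal is correct and follows essentially the same route as the paper: both arguments test $\calQ_n\one_{\{x\}}$ to obtain $u_n(x,x)\mu(x)\le c\,\nu(x)^{1/2-1/q}$, lower-bound $u_n(x,x)$ by a single bouncing path to a neighbour, upper-bound $\nu(x)$ via the harmonic decay estimate \cite[Theorem~2.7]{CKS-ALEA}, and then use \eqref{asm:cmp} to force a contradiction. The only cosmetic differences are that the paper keeps a general $p>2$ rather than fixing $q=4$, bounces to a neighbour at distance $d(x,x_0)+1$ (invoking \eqref{asm:cmp} already there) rather than toward $x_0$, and extracts the contradiction by splitting the product $\prod_{k}V(x_k)$ rather than passing to logarithms --- none of which affects the substance of the argument.
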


\begin{proof}
Suppose, to the contrary, that $\{\calU_n:n\in \nat_0\}$ satisfies IHC property, that is for some $p>2$ there exists $n_0\in \nat$ and a constant $c=c(n_0)>0$ such that
\begin{gather}\label{bnd_cnd}
	\Vert \calQ_{n_0}\,f \Vert_{\ell^p(\nu)}  \leq c\, \Vert f \Vert_{\ell^2(\nu)}.
\end{gather}
The measure $\nu(x)$ is given by \eqref{eq:v-measure}. We fix a reference point $x_0\in X$.
Let $z_0\in X$ be a point such that $d(z_0,x_0)=n>n_0$ and let $z_1\in X$ be such that $z_0\sim z_1$ and $d(z_1,x_0)=n+1$. The existence of $z_0$ for each $n>n_0$ is due to the structural assumptions on the graph $G$. 
Taking $f=\one_{\{z_0\}}$, the inequality \eqref{bnd_cnd} yields
\begin{gather}\label{eq:RS}
	\Vert \calQ_{n_0}\,\one_{\{z_0\}} \Vert_{\ell^p(\nu)}^p 
	\leq c\,(\phi_0(z_0)\hatphi_0(z_0)\mu(z_0))^{\frac{p}{2}}.
\end{gather}
On the other hand, we have
\begin{gather}\label{eq:LS}\begin{split}
	\Vert \calQ_{n_0}\,\one_{\{z_0\}} \Vert_{\ell^p(\nu)}^p 
	&=\sum_{x\in X} \left|\sum_{y \in X} \frac{u_{n_0}(x,y)}{\lambda_0^{n_0}\phi_0(x)\hatphi_0(y)}\, \one_{\{z_0\}}(y)\phi_0(y)\hatphi_0(y)\mu(y)\right|^p\,\phi_0(x)\hatphi_0(x)\mu(x)\\
	&=\sum_{x\in X} \left|\frac{u_{n_0}(x,z_0)}{\lambda_0^{n_0}\phi_0(x)}\,\phi_0(z_0)\mu(z_0)\right|^p\,\phi_0(x)\hatphi_0(x)\mu(x)\\
	&\geq \left|\frac{u_{n_0}(z_0,z_0)}{\lambda_0^{n_0}}\,\mu(z_0)\right|^p\,\phi_0(z_0)\hatphi_0(z_0)\mu(z_0)\\
	&=\frac{1}{\lambda_0^{p\,n_0}}\left(u_{n_0}(z_0,z_0)\mu(z_0)\right)^p\,\phi_0(z_0)\hatphi_0(z_0)\mu(z_0).
\end{split}\end{gather}
Combining the estimates \eqref{eq:RS} and \eqref{eq:LS}, and rearranging the terms yields 
\begin{gather*}
	\frac{1}{\lambda_0^{p\,n_0}}\left(u_{n_0}(z_0,z_0)\mu(z_0)\right)^p
	\leq c\,(\phi_0(z_0)\hatphi_0(z_0)\mu(z_0))^{\frac{p}{2}-1} .
\end{gather*}
Because of \eqref{walk_mu}, there is a constant $c_1=c_1(n_0)>0$ such that
\begin{gather}\label{halfway}
	\frac{1}{\lambda_0^{p\,n_0}}\left(u_{n_0}(z_0,z_0)\mu(z_0)\right)^p 
	\leq c_1\,(\phi_0(z_0)\hatphi_0(z_0))^{\frac{p}{2}-1}.
\end{gather}
In the next step we use the fact that ground state eigenfunctions are harmonic (with respect to $\calU^{\lambda_0} - \id := \lambda_0^{-1}\calU - \id$ and the dual operator), and we apply \cite[Theorem 2.7]{CKS-ALEA} to the geodesic path from $x_0$ to $ z_0$ of length $n$. This implies that, for a constant $c_2>0$, 
\begin{gather}\label{UB-V}
	(\phi_0(z_0)\hatphi_0(z_0))^{\frac{p}{2}-1} \leq c_2\left(\prod_{k=0}^{n} \frac{1}{V(x_k)}\right)^{p-2}.
\end{gather}
 We  can find a lower bound for the expression $u_{n_0}(z_0,z_0)\mu(z_0)$ by restricting the  $n_0$-fold sum appearing in the definition of $u_{n_0}(z_0,z_0)$  to a single path. Without loss of generality, we may assume  that $n_0$ is even. We consider the path $z_0 \to z_1 \to z_0 \to z_1 \to \ldots \to z_0$. By  \eqref{eq:nnrw_add_ass}, \eqref{walk_mu} and \eqref{asm:cmp}, we have
\begin{gather}\label{LB-V} \begin{split}
	u_{n_0}(z_0,z_0) 
	&= V(z_0)^{-1} \sum_{z_{1},\ldots ,z_{n_0-1}\in X} \frac{p(z_0,z_{1})\mu(z_1)}{V(z_{1})} \dots \frac{p(z_{n_0-2}, z_{n_0-1})\mu(z_{n_0-1})}{V(z_{n_0-1})}p(z_{n_0-1}, z_0)\\
	&\geq \left( p(z_0,z_1)\mu(z_1)\right)^{n_0/2}  V(z_0)^{-n_0/2}\left( p(z_1,z_0)\mu(z_0)\right)^{n_0/2}V(z_{1})^{-n_0/2}\mu(z_0)^{-1}\\
	&= P(z_0,z_1)^{n_0/2}   P(z_1,z_0)^{n_0/2}  V(z_0)^{-n_0/2}V(z_{1})^{-n_0/2}\mu(z_0)^{-1}  \\
	&\geq c_3^{n_0}V(z_0)^{-n_0}\mu(z_0)^{-1},
\end{split}\end{gather}
for some constant $c_3>0$. Combining \eqref{halfway}, \eqref{UB-V} and \eqref{LB-V} yields
\begin{gather*}
	\left(\prod_{k=0}^{n} V(x_k)\right)^{p-2}\left(\frac{1}{V(z_0)}\right)^{n_0\,p}\leq c_4,
\end{gather*}
for a constant $c_4=c_4(n_0)>0$. We arrive at
\begin{gather*}
	\left(\prod_{k=0}^{n-\left\lceil\frac{n_0\,p}{p-2}\right\rceil-1} V(x_k)\right)^{p-2}\left(\prod_{k=n-\left\lceil\frac{n_0\,p}{p-2}\right\rceil}^n V(x_k)\right)^{p-2}\left(\frac{1}{V(z_0)}\right)^{n_0\,p}
	\leq c_4.
\end{gather*}
The first factor on the left-hand side can become arbitrarily large, while the product of the two remaining factors is bounded away from zero, due to the  assumption \eqref{asm:cmp}. Thus, we have reached a contradiction.
\end{proof}


\section{Ergodicity and quasi-ergodicity}\label{sec:ergodic}

In this section we study the ergodic behaviour of the intrinsic semigroup $\{\calQ_n:n\in \nat_0\}$ (and its dual semigroup $\{\ducalQ_n:n\in \nat_0\}$) and the related quasi-ergodic behaviour of the discrete Feynman--Kac semigroup $\{\calU_n:n\in \nat_0\}$ (and its dual semigroup $\{\ducalU_n:n\in \nat_0\}$) with confining potentials in the DSP framework. We also investigate connections between ergodicity properties and the regularizing effects of the semigroups, which we have considered in previous sections. 

\subsection{Stationary and quasi-stationary measures}
Let $\nu$ be the measure given by \eqref{eq:v-measure}. In this section, we will use the following shorthand notation
$\ell^p (\nu) = \ell^p (X,\nu)$, $1 \leq p \leq \infty$ and 
\begin{align*}
	\|\nu\| = \Vert \nu \Vert_{\mathrm{TV}} = \sum_{x\in X}\nu(x),\qquad 
	\nubar (f)=\frac{1}{\|\nu\|}\sum_{y\in X}f(y)\nu(y).
\end{align*} 
Recall that the measure $\nubar$ is the stationary probability measure for the intrinsic semigroups, i.e.\ $\nubar(\calQ_n f) =\nubar(f)$ and $\nubar(\ducalQ_n f) =\nubar(f)$, for any $f\in \ell^1 (\nu)$ and $n \in \nat_0$. 

As before, we will  assume that the ground state eigenfunctions $\phi_0$ and $\hatphi_0$ are normalized, i.e.\ condition \eqref{ass:normalization} is satisfied. Combining the estimates \eqref{phi_0_bound}--\eqref{phi_hat_bound} with our assumption \eqref{B}, the inequality \eqref{eq:sub-invar} shows that $\| \phi_0\|_{\ell^1(\mu)}<\infty$ and $\|\hatphi_0\|_{\ell^1(\mu)} <\infty$.  This allows us to introduce the following probability measures
\begin{gather}\label{eq:measures-ergodic}
	m(E) := \frac{1}{\left\|\hatphi_0\right\|_{\ell^1(\mu)}}\sum_{y\in E}\hatphi_0(y)\mu(y) 
	\quad \text{and} \quad 
	\dum(E) := \frac{1}{\left\|\phi_0\right\|_{\ell^1(\mu)}}\sum_{y\in E}\phi_0(y)\mu(y), \quad E \subset X.
\end{gather}
These measures are \textit{quasi-stationary} measures of the semigroups $\{\calU_n:n\in \nat_0\}$ and  $\{\ducalU_n:n\in \nat_0\}$, respectively. This means that for any $n\in \nat_0$
\begin{align*}
\frac{m(\calU_n f)}{m(\calU_n\bbjedan)} = m( f), \quad f\in \ell^1 (m), \quad \text{and} \quad \frac{\dum(\ducalU_n f)}{\dum(\ducalU_n\bbjedan)} = \dum( f), \quad f\in \ell^1 (\dum),
\end{align*}
or, equivalently,
\begin{align*}
	m(\calU_n f) = \lambda_0^n m(f),\quad f\in\ell^1(m) 
	\quad\text{and}\quad
	\dum(\ducalU_n f) = \lambda_0^n \dum(f), \quad f\in\ell^1(\dum).
\end{align*} 
This is due to the fact that the discrete Feynman--Kac semigroups need not be conservative, so we have to normalize their action (see e.g.\ \eqref{kappa_U}) when studying their ergodic behaviour; this finally results in a quasi-ergodic behaviour of the semigroups.

Since $\nu$ is a finite measure, the corresponding $\ell^p$ spaces satisfy natural (strict) inclusions, i.e.\ one has
\begin{align} \label{eq:inclusions}
\ell^{\infty}(\nu) \subsetneq \ell^p(\nu) \subsetneq \ell^q(\nu) \subsetneq \ell^1(\nu), \quad \text{whenever} \quad 1 < q < p < \infty,
\end{align}
and the same is true for the $\ell^p$-spaces of the probability measures $m, \dum$. We will analyse the uniform ergodic and quasi-ergodic properties on various $\ell^p$ spaces, mainly $\ell^{\infty}$ and $\ell^1$, and explain how they depend on the regularity of the semigroups.

\subsection{Uniform ergodicity of intrinsic semigroups on $\ell^\infty (\nu)$ } \label{sec:unif_ergod}
We start our discussion with the smallest space:\ $\ell^\infty(\nu)$. Recall that the Markov semigroup $\{\calQ_n:n\in \nat_0\}$ is called \emph{uniformly ergodic} on $\ell^\infty (\nu)$ (or simply \emph{uniformly ergodic} in the sense of the definition in \cite[p.\ 393]{Tweedy}) if  there is  a stationary probability measure $\pi$ on $X$ such that
\begin{align}\label{Q-unif-ergodic}
	\lim_{n\to \infty} \sup_{x\in X}\sup_{\Vert g\Vert_{\ell^\infty (\nu)}\leq 1}
	\left\vert \calQ_ng(x) - \pi (g) \right\vert =0.
\end{align} 
The condition \eqref{Q-unif-ergodic} can be equivalently formulated as
\begin{gather*}
	\lim_{n\to\infty}\sup_{x\in X}\Vert \calQ_n(x,\cdot)-\pi\Vert_{\mathrm{TV}} = 0.
\end{gather*}

We will show that under the assumptions \eqref{A1}, \eqref{A3} and \eqref{B} the intrinsic semigroups $\{\calQ_n:n\in \nat_0\}$, $\{\ducalQ_n:n\in \nat_0\}$ are uniformly ergodic on $\ell^\infty (\nu)$ and that the rate of convergence to their quasi-stationary measures is geometric. It turns out that in this case the key assumption is the DSP property of the free kernels $P(x,y)$,  while  the potential $V$ is  may be  an arbitrary confining potential, and it does not matter how slowly $V$ grows.

In order to show uniform ergodicity with a geometric rate of convergence, we will apply \emph{Doeblin's condition}. More precisely, Theorem 16.0.2 from \cite{Tweedy} asserts that the Markov chain with transition kernel $\calQ$ is uniformly geometrically ergodic if, and only if, it is aperiodic and there exist a probability measure $\rho$ on $X$, some $\epsilon, \delta>0$ and $k\in \nat$, such that for all sets $A\subset X$ with $\rho (A)>\epsilon$  we have 
\begin{gather*}
	\inf_{x\in X} \calQ_k(x,A)>\delta.
\end{gather*}
In the proof of the following theorem we show that under our assumptions, Doeblin's condition for the semigroups $\{\calQ_n:n\in \nat_0\}$ and $\{\ducalQ_n:n\in \nat_0\}$ holds with measure $\nubar = \nu/\|\nu\|$ defined above, and that it is the only stationary measure for either semigroup in the sense of \eqref{Q-unif-ergodic}.

\begin{theorem}\label{thm:unif-ergodic}
	Assume \eqref{A1}, \eqref{A3} and \eqref{B}. 
	Then there exist $\kappa \in (0,1)$ and $C>0$ such that
\begin{align} \label{Q-unif-ergodic_geom_rate}
\sup_{x\in X}\sup_{\Vert g\Vert_{\ell^\infty (\nu)}\leq 1}
\left\vert \calQ_ng(x) - \nubar (g) \right\vert 
\leq C \kappa^{n}, \quad n \in \nat.
\end{align}
Moreover, $\nubar$ is the only stationary probability measure of the semigroup $\{\calQ_n:n\in \nat_0\}$. 
The same statement holds for the  dual  semigroup $\{\ducalQ_n:n\in \nat_0\}$. 
\end{theorem}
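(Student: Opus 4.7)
The strategy is to verify Doeblin's minorization condition for $\calQ_1$ against the probability measure $\nubar := \nu/\|\nu\|$, and then invoke Tweedie's Theorem 16.0.2 (equivalently, run a direct coupling argument) to obtain uniform geometric ergodicity. The crucial analytic input is the global lower bound \eqref{full_lower_estimate} of Theorem~\ref{HK:estimates},
\begin{align*}
u_n(x,y)\geq \widetilde C^{-1}\lambda_0^n\,\phi_0(x)\,\hatphi_0(y),\quad x,y\in X,\; n\geq 1.
\end{align*}
Strictly speaking, Theorem~\ref{HK:estimates} is stated under \eqref{A1}--\eqref{A3}, while here we assume only \eqref{A1}, \eqref{A3}, \eqref{B}. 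An inspection of its proof confirms, however, that the principal factorized term requires only the DSP \eqref{A1}, the ultracontractivity \eqref{A3}, and the ground-state comparisons \eqref{phi_0_bound}--\eqref{phi_hat_bound}; the laziness condition \eqref{A2} enters only when building the correction term $F_n$, which we do not need here.

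Dividing the above bound at $n=1$ by $\lambda_0\phi_0(x)\hatphi_0(y)$ yields the uniform pointwise minorization $q_1(x,y)\geq \widetilde C^{-1}$, so that for every $x\in X$ and every $A\subseteq X$,
\begin{align*}
\calQ_1(x,A) = \sum_{y\in A} q_1(x,y)\,\nu(y) \geq \widetilde C^{-1}\,\nu(A) = \delta\,\nubar(A),\qquad \delta := \|\nu\|/\widetilde C\in(0,1].
\end{align*}
This is precisely Doeblin's minorization of $\calQ_1$ against $\nubar$ at step $k=1$; aperiodicity is automatic, since $q_1>0$ pointwise. Splitting $\calQ_1(x,\cdot) = \delta\,\nubar(\cdot) + (1-\delta)\,R(x,\cdot)$ with a Markov kernel $R$ and iterating -- i.e.\ running the coupling construction that underlies Tweedie's theorem -- one obtains
\begin{align*}
\sup_{x\in X}\|\calQ_n(x,\cdot)-\nubar\|_{\mathrm{TV}} \leq (1-\delta)^n,\quad n\in\nat,
\end{align*}
which dualises to \eqref{Q-unif-ergodic_geom_rate} with $\kappa=1-\delta$ and a suitable $C>0$. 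Uniqueness of the stationary probability measure is immediate: any stationary $\pi$ satisfies $\pi=\pi\calQ_n$, and the displayed total-variation convergence then forces $\pi=\nubar$.

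The statement for $\{\ducalQ_n\}$ is obtained by an identical argument. Because $\duq_n(x,y)=q_n(y,x)$ and $\duu_n(x,y)=u_n(y,x)$, the very same lower bound yields $\duq_1(x,y)\geq\widetilde C^{-1}$ uniformly, and $\nubar$ is invariant under $\ducalQ_n$ as recorded in Section~\ref{sec:IUC}; the Doeblin/coupling machinery therefore applies verbatim. The only genuine obstacle I foresee is the preliminary book-keeping needed to check that \eqref{full_lower_estimate} indeed survives the removal of \eqref{A2} -- a rereading of the proof of Theorem~\ref{HK:estimates} rather than a new argument -- after which the derivation is a textbook application of Doeblin theory.
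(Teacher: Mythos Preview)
Your approach is correct and is essentially the paper's own proof. Both arguments establish the uniform lower bound $q_1(x,y)\geq c>0$ (hence Doeblin's minorization for $\calQ_1$ against $\nubar$), invoke aperiodicity from $q_1>0$, and then apply the standard Doeblin/Tweedie machinery; uniqueness of $\nubar$ and the dual statement follow exactly as you indicate. The only cosmetic difference is that the paper redoes the short $n=1$ computation directly---writing $q_1(x,y)=\frac{p(x,y)}{\lambda_0 V(x)\phi_0(x)\hatphi_0(y)}$ and applying \eqref{phi_0_bound}, \eqref{phi_hat_bound} together with the DSP inequality $P(x,y)\geq C_*^{-1}P(x,x_0)P(x_0,y)$---rather than citing \eqref{full_lower_estimate} and then arguing that \eqref{A2} is not needed for that bound; this makes the paper's version self-contained under the stated hypotheses, but the mathematical content is identical.
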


\begin{proof}
	The aperiodicity of the $\mathcal{Q}$-Markov chain follows at once from the fact that $P(x,y)>0$ for all $x,y$. Fix a reference point $x_0\in X$. We apply \eqref{phi_0_bound}, \eqref{phi_hat_bound} and the duality condition \eqref{cond:duality} to get 
\begin{align*}
	q_1(x,y)
	=\frac{1}{V(x)}\frac{p(x,y)}{\lambda_0\phi_0(x)\hatphi_0(y)}
	&\geq \frac{p(x,y)}{c\,\lambda_0\,P(x,x_0)\widehat{P}(y,x_0)}\\
	&=\frac{P(x,y)}{c\, \lambda_0\,P(x,x_0)P(x_0,y)\mu(x_0)}
	\geq  \frac{1}{c\, C_*\, \lambda_0\, \mu(x_0)},
\end{align*}
where the last inequality follows from \eqref{A1}.
We conclude that, for any $A\subset X$,
\begin{gather*}
	\calQ_1\bbjedan_A(x)
	= \sum_{y \in A}q_1(x,y)\nu(y)
	\geq \frac{1}{c\, C_*\, \lambda_0\mu(x_0)} \nu(A) = \frac{\|\nu\|}{c\, C_*\, \lambda_0\mu(x_0)} \nubar(A),
\end{gather*}
which implies Doeblin's condition for the probability measure $\nubar$. Therefore, there exists a probability measure $\pi$ on $X$ such that the uniform convergence in \eqref{Q-unif-ergodic} holds, and the rate of convergence is geometric. Consequently, for every $x \in X$, 
\begin{align} \label{eq:pi_is_nu}
	\nubar(x) = \nubar(\bbjedan_{\{x\}}) = \nubar(\calQ_n\bbjedan_{\{x\}}) \to\pi(x) \quad \text{as} \quad n \to \infty. 
\end{align}
This shows that $\pi =\nubar$, leading to \eqref{Q-unif-ergodic_geom_rate}.

The proof for the dual chain is the same as we have $\duq_1(x,y) = q_1(y,x)$. The uniqueness of the stationary measure $\nubar$ follows directly from the uniform convergence \eqref{Q-unif-ergodic_geom_rate} with the same argument as in \eqref{eq:pi_is_nu}.
\end{proof}

In  view  of Theorem \ref{thm:unif-ergodic} it is natural to ask whether our current (mild) assumptions ensure that the initial discrete Feynman--Kac semigroups $\{\calU_n:n\in \nat_0\}$, $\{\ducalU_n:n\in \nat_0\}$ are also uniformly quasi-ergodic on $\ell^{\infty} (m)$, resp., $\ell^{\infty} (\dum)$. The answer to this question is negative -- this will be clarified in the next section where we discuss the ergodicity on $\ell^1$ spaces, see Remark \ref{rem:neg_l_infty}.

\subsection{Equivalence between uniform ergodicity and quasi-ergodicity on $\ell^1$}

In this and the next paragraph we study ergodicity of the intrinsic semigroups and quasi-ergodicity of the original discrete Feynman--Kac semigroups in the $\ell^1$ setting, and their connection to the regularizing properties of both semigroups. 

We begin with a general result, which reveals an interesting connection between the \emph{progressive uniform ergodicity} of the intrinsic semigroup $\{\calQ_n:n\in \nat_0\}$ and the \emph{progressive uniform quasi-ergodicity} of the original Feynman--Kac semigroup $\{\calU_n:n\in \nat_0\}$. More precisely, we will show that the uniform ergodicity along a given exhaustion $(A_n)$ of the intrinsic semigroup $\{\calQ_n:n\in \nat_0\}$ on $\ell^1(\nu)$ is equivalent to the uniform quasi-ergodicity of the original Feynman--Kac semigroup $\{\calU_n:n\in \nat_0\}$ on $\ell^1(m)$ along the same exhaustion. It is remarkable that the ergodic convergence rates are equivalent as well. The same statement remains true for the dual semigroups.

\begin{proposition}\label{kappa_lem}
Assume \eqref{A3} and \eqref{B}. Let $\kappa: \nat \to [0,\infty)$ be a given rate function, let $(A_n)$ be an arbitrary sequence of subsets of $X$, and $n_0 \in \nat$. Consider the following two statements:
\begin{enumerate}
\item\label{kappa_lem-a} 
	There exists a constant $C_1>0$ such that for all $n \geq n_0$, 
	\begin{gather}\label{kappa_Q}
		\sup_{x\in A_{n}} \sup_{ \Vert f\Vert_{\ell^1 (\nu)}\leq 1}\left|\calQ_n\,f(x)-\nubar(f)\right|\leq C_1\,  \kappa(n).
	\end{gather}

\item\label{kappa_lem-b} 
	There exists a constant $C_2>0$ such that for all $n \geq n_0$, 
	\begin{gather}\label{kappa_U}
		\sup_{x\in A_{n}} \sup_{ \Vert f\Vert_{\ell^1 (m)}\leq 1} \left|\frac{\calU_n\,f(x)}{\calU_n\bbjedan(x)}-m(f)\right|\leq C_2\, \kappa(n).
	\end{gather} 
\end{enumerate} 
Under the above assumptions, condition \ref{kappa_lem-a} implies \ref{kappa_lem-b}. If, in addition, $C_3:=\sup_{n \geq n_0} C_2 \kappa(n) < 1$, then \ref{kappa_lem-b} implies \ref{kappa_lem-a}.

Moreover, the same implications  remain  true if $\calQ_n$ is replaced with $\ducalQ_n$ in \ref{kappa_lem-a}, and $\calU_n$, $m$ with $\ducalU_n$, $\dum$ in \ref{kappa_lem-b}. 
\end{proposition}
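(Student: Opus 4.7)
My plan is to build everything around a single algebraic identity linking the two ratios appearing in \ref{kappa_lem-a} and \ref{kappa_lem-b}. Setting $h := 1/\phi_0$ and, for any admissible $f$, $g := f/\phi_0$, the eigenequation for $\phi_0$ together with the definition of the intrinsic semigroup gives $\calU_n f(x) = \lambda_0^n \phi_0(x) \calQ_n g(x)$ and $\calU_n \bbjedan(x) = \lambda_0^n \phi_0(x) \calQ_n h(x)$. Dividing, and computing from the definitions that $m(f) = \nubar(g)/\nubar(h)$ (with $\nubar(h) = 1/\|\phi_0\|_{\ell^1(m)} > 0$), I obtain
\begin{align*}
\frac{\calU_n f(x)}{\calU_n \bbjedan(x)} - m(f) = \frac{\calQ_n g(x)}{\calQ_n h(x)} - \frac{\nubar(g)}{\nubar(h)}.
\end{align*}
A direct calculation shows that $f \mapsto f/\phi_0$ is a scaled isometry $\ell^1(m) \to \ell^1(\nu)$ with $\|f/\phi_0\|_{\ell^1(\nu)} = \|\hatphi_0\|_{\ell^1(\mu)}\|f\|_{\ell^1(m)}$; in particular $\|h\|_{\ell^1(\nu)} = \|\hatphi_0\|_{\ell^1(\mu)}$ is finite. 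Both directions will now be reduced to controlling the two differences $\calQ_n g(x) - \nubar(g)$ and $\calQ_n h(x) - \nubar(h)$.

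For \ref{kappa_lem-a}$\Rightarrow$\ref{kappa_lem-b} I can use the universal lower bound $q_n(x,y) \geq \widetilde C^{-1}$ coming from \eqref{full_lower_estimate}, which gives $\calQ_n h(x) \geq \widetilde C^{-1} \|\hatphi_0\|_{\ell^1(\mu)}$ \emph{uniformly} in $x$ and $n$. Writing
\begin{align*}
\frac{a}{b} - \frac{a'}{b'} = \frac{a-a'}{b} + \frac{a'(b'-b)}{b\,b'}
\end{align*}
with $a = \calQ_n g(x)$, $b = \calQ_n h(x)$ and the primed quantities their $\nubar$-means, I apply \ref{kappa_lem-a} to bound $|a-a'|$ and $|b-b'|$ by $C_1 \kappa(n) \|g\|_{\ell^1(\nu)}$ and $C_1 \kappa(n) \|h\|_{\ell^1(\nu)}$ respectively, and use the elementary estimate $|a'|/b' = |\nubar(g)|/\nubar(h) \leq \|f\|_{\ell^1(m)}$. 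Plugging in the norm identities and the uniform lower bound on $b$ produces \ref{kappa_lem-b} with $C_2 = 2 \widetilde C C_1$.

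The reverse direction \ref{kappa_lem-b}$\Rightarrow$\ref{kappa_lem-a} is where the main work lies: a uniform positive lower bound on $\calQ_n h(x)$ is no longer at hand and must be manufactured from \ref{kappa_lem-b} itself, which is precisely why the hypothesis $C_3 < 1$ enters. My strategy will be first to apply \ref{kappa_lem-b} to the test function $f_0 := \phi_0/\|\phi_0\|_{\ell^1(m)}$; since $\calU_n f_0 = \lambda_0^n \phi_0/\|\phi_0\|_{\ell^1(m)}$ and $m(f_0) = 1$, condition \ref{kappa_lem-b} yields
\begin{align*}
\left|\frac{1}{\|\phi_0\|_{\ell^1(m)} \calQ_n h(x)} - 1\right| \leq C_2 \kappa(n) \leq C_3 < 1,
\end{align*}
which inverts to the two-sided pinch $\calQ_n h(x) \in [\nubar(h)/(1+C_3),\, \nubar(h)/(1-C_3)]$ together with the bound $|\calQ_n h(x) - \nubar(h)| \leq \nubar(h) C_2 \kappa(n)/(1 - C_2 \kappa(n))$ for $x \in A_n$, $n \geq n_0$. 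For a general $g$ with $\|g\|_{\ell^1(\nu)} \leq 1$ I set $f := g \phi_0$ (so $\|f\|_{\ell^1(m)} \leq 1/\|\hatphi_0\|_{\ell^1(\mu)}$), apply \ref{kappa_lem-b} to control $\calQ_n g/\calQ_n h - \nubar(g)/\nubar(h)$, and rearrange the core identity as
\begin{align*}
\calQ_n g(x) - \nubar(g) = \calQ_n h(x)\left(\frac{\calQ_n g(x)}{\calQ_n h(x)} - \frac{\nubar(g)}{\nubar(h)}\right) + \frac{\nubar(g)}{\nubar(h)}\bigl(\calQ_n h(x) - \nubar(h)\bigr),
\end{align*}
bounding each piece to obtain \ref{kappa_lem-a} with a constant proportional to $C_2/(1 - C_3)$. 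The dual statements for $\ducalQ_n$, $\ducalU_n$, $\dum$ follow by the identical argument after swapping $P \leftrightarrow \Phat$ and $\phi_0 \leftrightarrow \hatphi_0$, since the whole construction is symmetric under this involution.
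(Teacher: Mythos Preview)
Your overall strategy --- the single algebraic identity linking $\calU_n f/\calU_n\bbjedan - m(f)$ to $\calQ_n g/\calQ_n h - \nubar(g)/\nubar(h)$ via $g=f/\phi_0$, $h=1/\phi_0$, followed by the two-term split of a difference of quotients --- is exactly the paper's approach, and your \ref{kappa_lem-b}$\Rightarrow$\ref{kappa_lem-a} direction is correct and matches the paper's argument almost line for line.

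There is, however, a genuine gap in your \ref{kappa_lem-a}$\Rightarrow$\ref{kappa_lem-b} direction. You invoke the uniform lower bound $q_n(x,y)\geq\widetilde C^{-1}$ from \eqref{full_lower_estimate} to get $\calQ_n h(x)\geq \widetilde C^{-1}\|\hatphi_0\|_{\ell^1(\mu)}$. But \eqref{full_lower_estimate} is part of Theorem~\ref{HK:estimates}, which requires the DSP assumption \eqref{A1} and the laziness assumption \eqref{A2}; the present proposition is stated only under \eqref{A3} and \eqref{B}. So as written your bound on the denominator is not available.

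The fix is immediate and is precisely what the paper does: since $\calQ_n h(x)=\lambda_0^{-n}\calU_n\bbjedan(x)/\phi_0(x)$ and $\calU_n\bbjedan(x)\geq \|\phi_0\|_{\ell^\infty(\mu)}^{-1}\calU_n\phi_0(x)=\|\phi_0\|_{\ell^\infty(\mu)}^{-1}\lambda_0^n\phi_0(x)$, you get $\calQ_n h(x)\geq \|\phi_0\|_{\ell^\infty(\mu)}^{-1}$ using only the eigenequation and the boundedness of $\phi_0$ (which follows from \eqref{A3}). Replace your appeal to \eqref{full_lower_estimate} by this one-line observation and the proof goes through under the stated hypotheses.
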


\begin{proof}  
We restrict our attention to the semigroups $\{\calQ_n:n\in \nat_0\}$ and $\{\calU_n:n\in \nat_0\}$ as the proof for  dual semigroups  is similar. 

\smallskip\noindent\ref{kappa_lem-a}$\Rightarrow$\ref{kappa_lem-b}:
We fix an arbitrary $\tilde{f}\in \ell^1(m)$ and set $f:=\tilde{f}/\phi_0$. One easily checks that 
\begin{align}\label{eq:help12}
	\| f\|_{\ell^1(\nu)} = \|\hatphi_0\|_{\ell^1(\mu)}\|\tilde{f}\|_{\ell^1(m)},
\end{align}
hence $f\in \ell^1(\nu)$. For any $x\in A_n$, we rewrite the expression appearing on the left-hand side of \eqref{kappa_U} as follows
\begin{align*}
	\frac{\calU_n\,\tilde{f}(x)}{\calU_n\bbjedan(x)}-m(\tilde{f})
	&= \frac{\lambda_0^{-n}\calU_n\,\tilde{f}(x)-\lambda_0^{-n}\calU_n\bbjedan(x)\,m(\tilde{f})}{\lambda_0^{-n}\calU_n\bbjedan(x)}\\
	&=\frac{\lambda_0^{-n}\calU_n\,\tilde{f}(x)-\frac{\phi_0(x)}{\|\nu\|} \Vert \hatphi_0\Vert_{\ell^1(\mu)}m(\tilde f)}{\lambda_0^{-n}\calU_n\bbjedan(x)}\\
	&\qquad \mbox{}+\frac{\frac{\phi_0(x)}{\|\nu\|} \Vert \hatphi_0\Vert_{\ell^1(\mu)}m(\tilde f)-\lambda_0^{-n}\calU_n\bbjedan(x)m(\tilde{f})}{\lambda_0^{-n}\calU_n\bbjedan(x)}.
\end{align*}
For the first summand we have
\begin{align*}
	\left\vert\frac{\lambda_0^{-n}\calU_n\,\tilde{f}(x)-\frac{\phi_0(x)}{\|\nu\|}\sum_{y\in X} \tilde{f}(y)\hatphi_0(y)\mu(y)}{\lambda_0^{-n}\calU_n\bbjedan(x)} \right\vert
	&=\frac{\phi_0(x)}{\lambda_0^{-n}\calU_n\bbjedan(x)}\left\vert \frac{\calU_n\,\tilde{f}(x)}{\lambda_0^{n}\phi_0(x)}
	   -\frac{1}{\|\nu\|}\sum_{y\in X}\tilde{f}(y)\hatphi_0(y)\mu(y)\right\vert\\
	&=\frac{\phi_0(x)}{\lambda_0^{-n}\calU_n\bbjedan(x)}|\calQ_n\,f(x)-\nubar (f)|\\
	&\leq  C_1 \left\|\phi_0\right\|_{\ell^\infty (\mu)} \|\hatphi_0\|_{\ell^1(\mu)} \| \tilde{f}\|_{\ell^1(m)} \, \kappa (n);
\end{align*}
in the last line we use \eqref{kappa_Q} together with \eqref{eq:help12} and the bound
\begin{gather}\label{bound-khelp1}
	\lambda_0^{n}\phi_0(x)
	= \calU_n\,\phi_0(x)
	\leq \left\|\phi_0\right\|_{\ell^\infty(\mu)}\calU_n\bbjedan(x).
\end{gather}
We proceed with the estimate of the second summand. We have
\begin{align*}
	\frac{\frac{\phi_0(x)}{\|\nu\|} \Vert \hatphi_0 \Vert_{\ell^1(\mu)}m(\tilde f) -\lambda_0^{-n}\calU_n\bbjedan(x)m(\tilde{f})}{\lambda_0^{-n}\calU_n\bbjedan(x)}
	&= \frac{\phi_0(x)}{\lambda_0^{-n}\calU_n\bbjedan(x)}\, m(\tilde{f})
	   \left(\frac{1}{\|\nu\|}\sum_{y\in X}\hatphi_0(y)\mu(y) - \frac{\calU_n\,\bbjedan(x)}{\lambda_0^{n}\phi_0(x)}\right)\\
	&=\frac{\phi_0(x)}{\lambda_0^{-n}\calU_n\bbjedan(x)}\,  m(\tilde{f}) \left(\nubar (1/\phi_0) -  \calQ_n(1/\phi_0)(x)\right).
\end{align*}
Using the assumption \eqref{kappa_Q} for the function $1/\phi_0$ gives
\begin{gather}\label{kappa-bound-help2} 
	\left\vert \nubar \left(\frac{1}{\phi_0}\right)-\calQ_n\left(\frac{1}{\phi_0}\right)(x)\right\vert \leq  C_1  \|\hatphi_0\|_{\ell^1(\mu)} \kappa(n),\quad x\in A_n.
\end{gather}
Further, we observe that
\begin{gather}\label{m-norm-bound-help3}
	|m(\tilde{f})|\leq \frac{\Vert f\Vert_{\ell^1(\nu)}}{\Vert \hatphi_0\Vert_{\ell^1(\mu)}}.
\end{gather} 
If we combine \eqref{bound-khelp1}, \eqref{kappa-bound-help2} and  \eqref{m-norm-bound-help3} with \eqref{eq:help12}, then we obtain
\begin{align*}
	\frac{\phi_0(x)m(\tilde{f})}{\lambda_0^{-n}\calU_k\bbjedan(x)}\left\vert \nubar \left(\frac{1}{\phi_0}\right)-\calQ_n\left(\frac{1}{\phi_0}\right)(x)\right\vert
	&\leq C_1   \|\phi_0\|_{\ell^\infty(\mu)}  \|\hatphi_0\|_{\ell^1(\mu)}  \|\tilde{f}\|_{\ell^1(m)}\, \kappa(n),
\end{align*}
finishing the proof of the implication \ref{kappa_lem-a}$\Rightarrow$\ref{kappa_lem-b}.

\smallskip\noindent\ref{kappa_lem-b}$\Rightarrow$\ref{kappa_lem-a}:
Recall that for this implication we additionally assume that $C_3=\sup_{n \geq n_0} C_2 \kappa(n) < 1$.
For an arbitrary $f \in \ell^1(\nu)$ we  write  $\tilde{f}:= f \phi_0$. Clearly, $\tilde{f} \in \ell^1(m)$ and \eqref{eq:help12} holds. For any $x\in A_n$, we can write
\begin{align*}
	\calQ_n\,f(x)-\nubar (f) 
	&= \frac{\lambda_0^{-n}\calU_n\,\tilde{f}(x)}{\phi_0(x)} - \nubar (f)\\
  	&= \frac{\lambda_0^{-n}\calU_n\bbjedan(x)}{\phi_0(x)} \left(\frac{\calU_n\,\tilde{f}(x)}{\calU_n\bbjedan(x)} - m(\tilde{f}) + m(\tilde{f})-\nubar (f) \frac{\phi_0(x)}{\lambda_0^{-n}\calU_n\bbjedan(x)}\right)
\intertext{and, since $m(\tilde{f}) = \nubar (f) (\|\nu\|/\Vert \hatphi_0\Vert_{\ell^1(\mu)}) = \nubar (f) m(\phi_0)$ and $\phi_0 = \lambda_0^{-n}\calU_n \phi_0$, this equality can be further transformed to}
	\calQ_n\,f(x)-\nubar (f) 
  	&= \frac{\lambda_0^{-n}\calU_n\bbjedan(x)}{\phi_0(x)} \left(\frac{\calU_n\,\tilde{f}(x)}{\calU_n\bbjedan(x)} - m(\tilde{f})\right) 
	  + \nubar (f) \frac{\lambda_0^{-n}\calU_n\bbjedan(x)}{\phi_0(x)} \left(m(\phi_0)-\frac{\calU_n\phi_0(x)}{\calU_n\bbjedan(x)}\right).
\end{align*}
If we take $f=\phi_0$ in \eqref{kappa_U} and use $C_3=\sup_{n \geq n_0} C_2 \kappa(n) < 1$, we see that for $n \geq n_0$, 
\begin{gather*}
	\frac{\lambda_0^{-n}\calU_n\bbjedan(x)}{\phi_0(x)}  \leq c, \quad x \in A_n,
\end{gather*}
with $c = (m(\phi_0)(1-C_3))^{-1}$. 
Indeed, from \eqref{kappa_U} we have for $n \geq n_0$
\begin{gather*}
	m(\phi_0)-\frac{\calU_n\phi_0(x)}{\calU_n\one(x)}\leq C_2\kappa(n) m(\phi_0) \leq C_3 m(\phi_0),
\intertext{and so}
	m(\phi_0)(1-C_3)\leq \frac{\calU_n\phi_0(x)}{\calU_n\one(x)} = \frac{\lambda_0^n\phi_0(x)}{\calU_n\one(x)}.
\end{gather*}
	
Consequently, using \eqref{kappa_U} for both $\tilde{f}$ and $\phi_0$, and \eqref{eq:help12}, we obtain
\begin{align*}
	|\calQ_n\,f(x)-\nubar (f)| 
	&\leq c \left|\frac{\calU_n\,\tilde{f}(x)}{\calU_n\bbjedan(x)} - m(\tilde{f})\right|
	   + c \, \nubar (f) \left|\frac{\calU_n\phi_0(x)}{\calU_n\bbjedan(x)}-m(\phi_0)\right| \\
	&\leq c \, C_2 \left(\Vert \tilde{f}\Vert_{\ell^1(m)} + \frac{\Vert f \Vert_{\ell^1(\nu)}}{\|\nu\|}\Vert \phi_0\Vert_{\ell^1(m)}\right) \kappa(n)\\
	&= \frac{2 \, c \, C_2 }{\Vert \hatphi_0\Vert_{\ell^1(\mu)}} \Vert f \Vert_{\ell^1(\nu)} \kappa(n),
\end{align*}
for $x \in A_n$ and sufficiently large $n$. This completes the proof of the implication \ref{kappa_lem-b}$\Rightarrow$\ref{kappa_lem-a}.
\end{proof}

\begin{remark}\label{rem:neg_l_infty}
We can now come back to the issue raised at the end of Section \ref{sec:unif_ergod}. If we analyse the proof of Proposition \ref{kappa_lem}, especially the implication \ref{kappa_lem-a}$\Rightarrow$\ref{kappa_lem-b}, we conclude that, in order to obtain the quasi-ergodicity of the initial discrete Feynman--Kac semigroups $\{\calU_n:n\in \nat_0\}$ and $\{\ducalU_n:n\in \nat_0\}$ on the spaces  $\ell^{\infty} (m)$ and $\ell^{\infty} (\dum)$, respectively, we would need to prove ergodicity of the intrinsic semigroups on a much larger space than $\ell^{\infty} (\nu)$. More precisely, if (say) $\tilde{f} \in \ell^{\infty} (m)$, then one needs to know that $\calQ_n(\tilde{f}/\phi_0)$ and $\calQ_n(1/\phi_0)$ converge, but the trouble is that $\tilde{f}/\phi_0, 1/\phi_0 \notin \ell^{\infty} (\nu)$. This means that the general assumptions in Theorem \ref{thm:unif-ergodic} are not strong enough to ensure uniform quasi-ergodicity on $\ell^{\infty} (m)$, resp., $\ell^{\infty} (\dum)$. 
\end{remark}

\subsection{Necessary and sufficient conditions for uniform (quasi-)ergodicity on $\ell^1$}

We first prove that pIUC implies uniform ergodicity of the intrinsic semigroups on $\ell^1(\nu)$ with a rate function $\kappa(n)$, which is determined by the decay rate of the potential along a given exhausting family. In view of Proposition \ref{kappa_lem} this is equivalent to the uniform quasi-ergodicity along the same exhaustion of the original Feynman--Kac semigroups. 

Before we state this result, we recall a lemma, which describes the rate of convergence of the kernel $u_n(x,y)$ of the compact operator $\calU_n$. This type of result is known in a much more general context of continuous time semigroups, see e.g.\ \cite{Kim-Song-2008}, \cite{Zhang} and references therein. Now we need a version with a space rate on the right-hand side, i.e.\ a discrete time counterpart of the result from \cite{KSch-ergodic}.

\begin{lemma}\label{rho_lem}
	Assume \eqref{A3} and \eqref{B}. There exist $C>0$ and $\rho \in (0,1)$ such that for all $x,y \in X$,  $n \geq 3$, 	
	and all $k,l,j \in\nat_0$ such that $k+l+j=n$ we have
\begin{gather}\label{eq:klm}
	\left|\|\nu\| \lambda_0^{-n} \, u_n(x,y)-\phi_0(x)\hatphi_0(y)\right|
	\leq C\rho^k\left(\lambda_0^{-l}\, \calU_{l}\bbjedan(x)\right)\left(\lambda_0^{-j}\, \ducalU_{j}\bbjedan(y)\right).
\end{gather}
\end{lemma}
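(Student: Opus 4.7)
My approach is to isolate the ground state via a rank-one spectral projection and control the remainder using a spectral gap together with the ultracontractivity afforded by \eqref{A3}. Introduce
\[
\Pi f(x) := \frac{\phi_0(x)}{\|\nu\|}\sum_{y\in X} f(y)\hatphi_0(y)\mu(y),
\]
the spectral projection onto the $\lambda_0$-eigenspace. The eigenequations $\calU\phi_0=\lambda_0\phi_0$ and $\ducalU\hatphi_0=\lambda_0\hatphi_0$ yield $\Pi^2=\Pi$, $\calU\Pi=\Pi\calU=\lambda_0\Pi$, and more generally $\calU_l\,\Pi\,\calU_j=\lambda_0^{l+j}\Pi$. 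Setting $R:=\calU-\lambda_0\Pi$, commutativity and idempotence give $R^k=\calU_k-\lambda_0^k\Pi$, so the kernel of $R^k$ with respect to $\mu$ is
\[
r_k(x,y) := u_k(x,y) - \lambda_0^k\phi_0(x)\hatphi_0(y)/\|\nu\|,
\]
and the left-hand side of \eqref{eq:klm} equals $\|\nu\|\lambda_0^{-n}|r_n(x,y)|$.

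From the identities above one obtains the operator factorization $R^n=\calU_l\,R^k\,\calU_j$ whenever $k+l+j=n$, which at the kernel level reads
\[
r_n(x,y)=\sum_{z,w\in X}u_l(x,z)\,r_k(z,w)\,u_j(w,y)\,\mu(z)\mu(w).
\]
The plan is then to establish a uniform pointwise estimate $|r_k(z,w)|\leq C_0(\rho\lambda_0)^k$, valid for some $\rho\in(0,1)$ and all $k\geq 3$. Granted such a bound, the triangle inequality combined with $\sum_z u_l(x,z)\mu(z)=\calU_l\bbjedan(x)$ and $\sum_w u_j(w,y)\mu(w)=\ducalU_j\bbjedan(y)$ (the latter via duality $u_j(w,y)=\duu_j(y,w)$) yields $|r_n(x,y)| \leq C_0(\rho\lambda_0)^k\,\calU_l\bbjedan(x)\,\ducalU_j\bbjedan(y)$; multiplication by $\|\nu\|\lambda_0^{-n}$ then gives \eqref{eq:klm}.

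The pointwise bound on $r_k$ is obtained in two stages. First, compactness of $\calU$ on $\ell^2(\mu)$ guaranteed by \eqref{B}, combined with Jentzsch's theorem, yields that $\lambda_0$ is the unique eigenvalue on the spectral circle; hence the spectral radius of $R$ equals $\rho_0\lambda_0$ for some $\rho_0\in(0,1)$, and Gelfand's formula delivers $\|R^{k-2}\|_{\ell^2(\mu)\to\ell^2(\mu)}\leq C_1(\rho\lambda_0)^{k-2}$ for any fixed $\rho\in(\rho_0,1)$. Second, \eqref{A3} combined with $V\geq V_-$ implies $\sup_{x,y}u(x,y)<\infty$, from which an elementary calculation using $P\bbjedan\leq\bbjedan$ and $\widehat{P}\bbjedan\leq\bbjedan$ shows that $\sup_x\|u(x,\cdot)\|_{\ell^2(\mu)}$ and $\sup_y\|u(\cdot,y)\|_{\ell^2(\mu)}$ are both finite; together with the boundedness of $\phi_0$, $\hatphi_0$, this transfers to analogous bounds for $r_1$. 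Writing $r_k(x,y)\mu(y)=R^k\bbjedan_{\{y\}}(x)$ and applying Cauchy--Schwarz at both ends of the factorization $R^k=R\cdot R^{k-2}\cdot R$ yields
\[
|r_k(x,y)|\leq \|R\|_{\ell^2\to\ell^\infty}\,\|R^{k-2}\|_{\ell^2\to\ell^2}\,\|r_1(\cdot,y)\|_{\ell^2(\mu)}\leq C_0(\rho\lambda_0)^k
\]
uniformly in $x,y\in X$ and $k\geq 3$.

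The step I expect to require the most care is verifying the strict spectral gap in the non-self-adjoint setting, i.e.\ that no eigenvalue of $\calU$ other than $\lambda_0$ has modulus $\lambda_0$: this goes beyond the simplicity statement of Jentzsch quoted in Section~\ref{sec:Prel} and invokes the peripheral-spectrum refinement for compact positive operators with a strictly positive ground state. The remainder of the argument is bookkeeping, and it is worth noting that assumption \eqref{A1} is never used, consistent with the stated hypotheses of the lemma.
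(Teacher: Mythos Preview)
The paper does not actually supply a proof of this lemma; it only remarks that the argument ``does not much differ from that in \cite{Zhang, KSch-ergodic}'' and omits all details. Your outline is precisely the standard spectral-gap argument used in those references: split off the rank-one projector $\Pi$ onto $\phi_0$, control the remainder $R=\calU-\lambda_0\Pi$ via the spectral radius formula, and then sandwich $R^k$ between two copies of $\calU$ to trade $\ell^2\to\ell^2$ bounds for pointwise ones using the ultracontractivity from \eqref{A3}. So your approach matches the intended one.

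Two points deserve attention. First, the operator identity $R^n=\calU_l R^k\calU_j$ relies on $\Pi R=R\Pi=0$ and therefore holds only for $k\geq 1$; for $k=0$ one has $\calU_l R^0\calU_j=\calU_n=R^n+\lambda_0^n\Pi$, so your kernel formula for $r_n$ fails in that case, and the ``kernel'' of $R^0=I$ is not uniformly bounded. The $k=0$ case (which is the weakest since $\rho^0=1$) must be treated separately, and it is not entirely trivial because $\calU_{l-1}\bbjedan$ need not be dominated by $\calU_l\bbjedan$. In the applications (Theorem~\ref{pIUC_erg}) this is harmless, but as stated the lemma includes $k=0$. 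Second, you correctly flag the peripheral-spectrum issue: Jentzsch's theorem as quoted gives only simplicity of $\lambda_0$, not uniqueness on the spectral circle. The cited references invoke a refinement for irreducible compact positive operators (essentially an aperiodicity argument), and since the present lemma assumes only \eqref{A3} and \eqref{B} one is implicitly relying on the positivity of $p(x,y)$ and of the ground states already asserted earlier in Section~\ref{sec:Prel}.
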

As the proof in the discrete setting does not much differ from that in \cite{Zhang, KSch-ergodic}, we do not include it here.  
 
According to Theorem \ref{th:pIUC} the pIUC property holds for both semigroups $\{\calU_n:n\in \nat_0\}$ and $\{\ducalU_n:n\in \nat_0\}$  if we assume  \eqref{A1}--\eqref{A3} and \eqref{B}. This shows the wide applicability of the next theorem.    

\begin{theorem}\label{pIUC_erg}
Assume \eqref{A1}, \eqref{A3} and \eqref{B}. Then the following statements hold. 
\begin{enumerate}
\item\label{pIUC_erg-a} 
	If $n_0 \in \nat$ and $(A_n)$ is an exhausting sequence for which both semigroups $\{\calU_n:n\in \nat_0\}$, $\{\ducalU_n:n\in \nat_0\}$ are pIUC\textup{($n_0$)}, then there exist $C_1, C_2 >0$ and $\rho\in (0,1)$ such that for all $n > 2n_0+1$ and $k,l,j \in \nat_0$ such that $l,j \geq n_0$ and $k+l+j=n-1$ we have 
	\begin{gather}\label{pIUC_sup}
		\sup_{x\in A_{l}} \, \sup_{ \Vert f\Vert_{\ell^1 (\nu)}\leq 1} \left|\calQ_n\,f(x)-\nubar (f)\right|
		\leq C_1 \left( \sup_{z\in A^c_{j}}\frac{1}{V(z)}+\rho^{k}\right)
	\intertext{and }\label{pIUC_quasi_erg}
		\sup_{x\in A_{l}}\sup_{ \Vert f\Vert_{\ell^1 (m)}\leq 1}\left|\frac{\calU_n\,f(x)}{\calU_n\bbjedan(x)}-m(f)\right|
		\leq C_2 \left( \sup_{z\in A^c_{j}}\frac{1}{V(z)}+\rho^{k}\right).
	\end{gather}
	These assertions remain valid for the dual setting, i.e.\ if one replaces $\calQ_n$ with $\ducalQ_n$ in \eqref{pIUC_sup}, and $\calU_n$ and $m$ with $\ducalU_n$ and $\dum$ in \eqref{pIUC_quasi_erg}. 

\item\label{pIUC_erg-b} 
	If there is an exhausting sequence $(A_n)$ such that  
	\begin{gather}\label{eq:e_aux}
		\lim_{n \to \infty} \sup_{x\in A_{n}} \sup_{ \Vert f\Vert_{\ell^1 (\nu)}\leq 1} \left|\calQ_n\,f(x)-\nubar (f)\right| = 0, 
		\quad f \in \ell^1 (\nu),
	\intertext{or}\label{eq:q-e_aux}
		\lim_{n \to \infty} \sup_{x\in A_{n}} \sup_{ \Vert f\Vert_{\ell^1 (m)}\leq 1} \left|\frac{\calU_n\,f(x)}{\calU_n\bbjedan(x)}-m(f)\right| = 0, \quad f \in \ell^1 (m),
	\end{gather}
	holds, then the semigroup $\{\calU_n:n\in \nat_0\}$ is pIUC with respect to the exhaustion $(A_{n-1})$. This assertion remains true, if one replaces $\calQ_n, \calU_n, m$ with $\ducalQ_n, \ducalU_n, \dum$.
\end{enumerate}
\end{theorem}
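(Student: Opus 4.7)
For part (a), my strategy is to combine Lemma \ref{rho_lem} with one additional semigroup step; the extra step is what accounts for the shift from $k+l+j=n$ in the lemma to $k+l+j=n-1$ in the theorem, and it is what introduces the $\sup_{A_j^c}1/V$ term on the right-hand side. Starting from $u_n(x,y)=\sum_z u_{n-1}(x,z)\,p(z,y)\mu(z)/V(z)$ and using the eigen-equation $\ducalU\hatphi_0=\lambda_0\hatphi_0$ in the form $\lambda_0\hatphi_0(y)=\sum_z p(z,y)\mu(z)\hatphi_0(z)/V(z)$, I rewrite the error as
\begin{equation*}
u_n(x,y)-\lambda_0^n\phi_0(x)\hatphi_0(y)/\|\nu\|=\sum_{z\in X}R_{n-1}(x,z)\,\frac{p(z,y)\mu(z)}{V(z)},
\end{equation*}
where $R_{n-1}(x,z):=u_{n-1}(x,z)-\lambda_0^{n-1}\phi_0(x)\hatphi_0(z)/\|\nu\|$, and split the $z$-sum into $\{z\in A_j\}$ and $\{z\in A_j^c\}$.

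On $\{z\in A_j\}$, Lemma \ref{rho_lem} applied to $u_{n-1}$ with the given decomposition $k+l+j=n-1$ gives $|R_{n-1}(x,z)|\lesssim \rho^k\,\lambda_0^{n-1}\bigl(\lambda_0^{-l}\calU_l\one(x)\bigr)\bigl(\lambda_0^{-j}\ducalU_j\one(z)\bigr)/\|\nu\|$; pIUC($n_0$) for $\{\calU_n\}$ yields $\lambda_0^{-l}\calU_l\one(x)\leq c\,\phi_0(x)$ for $x\in A_l$, and the dual pIUC($n_0$) gives $\lambda_0^{-j}\ducalU_j\one(z)\leq c\,\hatphi_0(z)$ for $z\in A_j$. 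After multiplying by $p(z,y)\mu(z)/V(z)$ and summing, the eigen-equation once more collapses $\sum_z\hatphi_0(z)p(z,y)\mu(z)/V(z)$ to $\lambda_0\hatphi_0(y)$, so this piece is bounded by $C_1\,\rho^k\,\lambda_0^n\phi_0(x)\hatphi_0(y)/\|\nu\|$. On $\{z\in A_j^c\}$, I use $1/V(z)\leq \sup_{A_j^c}1/V$ and the triangle inequality $|R_{n-1}(x,z)|\leq u_{n-1}(x,z)+\lambda_0^{n-1}\phi_0(x)\hatphi_0(z)/\|\nu\|$; since $x\in A_l\subseteq A_{n-1}$, pIUC gives $u_{n-1}(x,z)\leq c\lambda_0^{n-1}\phi_0(x)\hatphi_0(z)$, so both summands are of that form. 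The residual sum is controlled via \eqref{phi_hat_bound} and the DSP \eqref{eq:DSP} applied at $x_0$: $\sum_z\hatphi_0(z)p(z,y)\mu(z)\leq c\sum_z p(x_0,z)p(z,y)\mu(z)\leq cC_*p(x_0,y)\leq c'\hatphi_0(y)$, yielding a bound of order $\sup_{A_j^c}(1/V)\cdot\lambda_0^n\phi_0(x)\hatphi_0(y)$.

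Adding the two contributions and dividing by $\lambda_0^n\phi_0(x)\hatphi_0(y)$ gives
\begin{equation*}
\sup_{y\in X}\bigl|q_n(x,y)-1/\|\nu\|\bigr|\leq C_1\Bigl(\rho^k+\sup_{z\in A_j^c}1/V(z)\Bigr),\quad x\in A_l,
\end{equation*}
which is exactly \eqref{pIUC_sup}, since $\sup_{\|f\|_{\ell^1(\nu)}\leq 1}|\calQ_n f(x)-\nubar(f)|=\sup_y|q_n(x,y)-1/\|\nu\||$. The quasi-ergodicity estimate \eqref{pIUC_quasi_erg} then follows from the implication \ref{kappa_lem-a}$\Rightarrow$\ref{kappa_lem-b} in Proposition \ref{kappa_lem}, and the dual statements are obtained by interchanging the roles of $\calU$ and $\ducalU$ (using $\duu_n(y,x)=u_n(x,y)$).

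For part (b), if \eqref{eq:e_aux} holds, then for $n$ sufficiently large the displayed supremum is strictly less than $1/\|\nu\|$, so $q_n(x,y)\leq 2/\|\nu\|$ uniformly in $x\in A_n$ and $y\in X$; equivalently, $u_n(x,y)\leq (2/\|\nu\|)\lambda_0^n\phi_0(x)\hatphi_0(y)$, and because $A_{n-1}\subseteq A_n$ this is pIUC($n_0$) with respect to $(A_{n-1})$ for a suitable $n_0$. If instead \eqref{eq:q-e_aux} is assumed, I apply the implication \ref{kappa_lem-b}$\Rightarrow$\ref{kappa_lem-a} in Proposition \ref{kappa_lem} with threshold taken so large that $C_3<1$, and fall back onto the previous case. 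The main obstacle throughout is the regime $y\notin A_j$ in part (a): neither pIUC nor its dual gives any direct control of $y$ there, and it is only the combination of the extra semigroup step, the splitting of the auxiliary variable $z$, and the DSP-based ground-state asymptotics $\hatphi_0\asymp\phat(\cdot,x_0)$ that allows one to convert the suppressed potential weight outside $A_j$ into the advertised rate $\sup_{A_j^c}1/V$.
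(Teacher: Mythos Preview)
Your argument is correct and follows the same structural skeleton as the paper's proof: apply Lemma~\ref{rho_lem} to $u_{n-1}$, split the intermediate variable over $A_j$ and $A_j^c$, invoke pIUC for both semigroups on $A_j$, and handle $A_j^c$ via the DSP together with the ground-state asymptotics \eqref{phi_hat_bound}. The only real difference in Part~\ref{pIUC_erg-a} is one of packaging: you work pointwise at the kernel level (proving $\sup_y|q_n(x,y)-1/\|\nu\||\le C_1(\rho^k+\sup_{A_j^c}1/V)$ directly), whereas the paper keeps a test function $f$ throughout, writes $\calQ_n f-\nubar(f)=\calQ_{n-1}(\calQ_1 f)-\nubar(\calQ_1 f)$, and then uses the duality $\sum_{y\in A_j^c}\calQ_1(|f|)(y)\nu(y)=\sum_z\ducalQ_1(\bbjedan_{A_j^c})(z)|f(z)|\nu(z)$ to isolate the $\sup_{A_j^c}1/V$ factor. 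Both routes use the same DSP computation in the end.

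For Part~\ref{pIUC_erg-b} your route is actually more direct than the paper's. The paper first passes from \eqref{eq:e_aux} to \eqref{eq:q-e_aux} via Proposition~\ref{kappa_lem}, then tests \eqref{eq:q-e_aux} against $f=\phi_0$ and $f=u_1(\cdot,y)$; the latter choice costs one step and is the reason for the shift to the exhaustion $(A_{n-1})$. You instead read off $q_n(x,y)\le 2/\|\nu\|$ directly from \eqref{eq:e_aux}, which already gives pIUC with respect to $(A_n)$ and hence, a fortiori, with respect to $(A_{n-1})$.
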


\begin{proof}
We first consider part \ref{pIUC_erg-a}. In view of Proposition \ref{kappa_lem}, it suffices to prove \eqref{pIUC_sup}. We start with the semigroup $\{\calQ_n:n\in \nat_0\}$. Fix $k,l,m \in\nat_0$ such that $l,j \geq n_0$ and $k+l+j=n-1$. For any $x\in A_l$ and $f\in \ell^1(\nu)$  we have
\begin{align*}
	\left|\calQ_n\,f(x)-\nubar (f)\right|
	&=\left|\calQ_{n-1}\calQ_1\,f(x)-\nubar (\calQ_{1}f)\right|\\
	&=\Big|\sum_{y\in X}\left(q_{n-1}(x,y)-\frac{1}{\|\nu\|}\right)\calQ_{1}f(y)\nu(y)\Big|.
\end{align*}
Now we split the sum $\sum_{y\in X}$ into two parts $\sum_{y\in A_j} + \sum_{y\in A_j^c}$ and we estimate both sums separately. We start with the first part
\begin{align*}
	&\left|\sum_{y\in A_{j}} \frac{1}{\|\nu\|\phi_0(x)\hatphi_0(y)} \left(\frac{\|\nu\|u_{n-1}(x,y)}{\lambda_0^{n-1}}-\phi_0(x)\hatphi_0(y)\right) \calQ_{1}f(y)\nu(y)\right|\\
	&\qquad\leq \sum_{y\in A_{j}} \frac{1}{\|\nu\|\phi_0(x)\hatphi_0(y)}\left|\frac{\|\nu\|u_{n-1}(x,y)}{\lambda_0^{n-1}}-\phi_0(x)\hatphi_0(y)\right|
	\calQ_{1}\left(\left|f\right|\right)(y)\nu(y)\\
	&\qquad\leq c\rho^k \frac{\calU_l\bbjedan(x)}{\lambda_0^l\phi_0(x)} \sum_{y\in A_{j}} \frac{\ducalU_j\bbjedan(y)}{\lambda_0^j\hatphi_0(y)} \calQ_{1}\left(\left|f\right|\right)(y)\nu(y)\\
	&\qquad\leq c_1\rho^k\sum_{y \in A_{j}}\calQ_{1}\left(\left|f\right|\right)(y)\nu(y)\leq  c_1\rho^k\left\|f\right\|_{\ell^1(\nu)},
\end{align*}
where we applied Lemma \ref{rho_lem} and the pIUC property of both semigroups, combined with the fact that $\nu$ is the stationary measure of the intrinsic semigroup.

We proceed with the estimate of the second sum. Combining pIUC and the triangle inequality imply that there is a constant $c>0$ such that  for every  $x\in A_l$ and $y\in X$, $\left\vert q_{n}(x,y) - \|\nu\|^{-1}\right\vert \leq c$, hence
\begin{align*}
	\left|\sum_{y\in A_{j}^c}\left(q_{n-1}(x,y)-\frac{1}{\|\nu\|}\right)\calQ_{1}f(y)\nu(y)\right|
	&\quad\leq c\sum_{y\in A_{j}^c}\calQ_{1}\left(|f|\right)(y)\nu(y)\\
	&\quad=c\sum_{z\in X}\ducalQ_{1}\left(\bbjedan_{A_j^c}\right)(z)|f(z)|\nu (z)\\
	&\quad\leq c \Vert f\Vert_{\ell^1(\nu)} \Vert\ducalQ_{1}\left(\bbjedan_{A_{j}^c}\right)\Vert_{\ell^\infty (\nu)} .
\end{align*}
Let us finally  show that there is a constant $c_1>0$ such that
\begin{align}\label{eq:aux_piuc1}
	\Vert\ducalQ_{1}\bbjedan_{A_{j}^c}\Vert_{\ell^\infty (\nu)} 
	\leq c_1 \sup_{y\in A_j^c}\frac{1}{V(y)}.
\end{align}
To this end, we observe that, for any $z\in X$,
\begin{align*} 
	\ducalQ_{1}\left(\bbjedan_{A_{j}^c}\right)(z)
	&= \sum_{y\in X}\duq_{1}(z,y)\bbjedan_{A_{j}^c}(y)\nu(y)\\
	&=\frac{1}{\lambda_0 \hatphi_0(z)}\sum_{y\in X}u_{1}(y,z)\bbjedan_{A_{j}^c}(y) \hatphi_0(y)\mu(y) \\
	&\leq\left(\sup_{y\in A_j^c}\frac{1}{V(y)}\right)\cdot\frac{1}{\lambda_0 \hatphi_0(z)}\sum_{y\in X}p(y,z) \one_{A_j^c}(y) \hatphi_0(y)\mu(y). 
\end{align*}
Further, by \eqref{eq:DSP} combined with \eqref{phi_hat_bound}, for a fixed reference point $x_0\in X$ we have
\begin{align*}
	\frac{1}{\lambda_0 \hatphi_0(z)}\sum_{y\in X}p(y,z)\bbjedan_{A_{j}^c}(y) \hatphi_0(y)\mu(y)
	&\leq \frac{1}{\lambda_0 \hatphi_0(z)}\sum_{y\in X}p(y,z)\widehat{p}(y,x_0)\mu(y)\\
	&\leq C_{*} \frac{1}{\lambda_0 \hatphi_0(z)}\widehat{p}(z,x_0)\leq c_2,
\end{align*}
where the penultimate bound is justified by \eqref{A1}, and this completes the proof for $\{\calQ_n:n\in \nat_0\}$. 

The proof of \eqref{pIUC_sup} for the semigroup $\{\ducalQ_n:n\in \nat_0\}$ is very similar. The only difference is in the argument leading to \eqref{eq:aux_piuc1} for the operator $\calQ_1$: Fix $x_0\in X$. By \eqref{phi_0_bound} and \eqref{eq:DSP}, we have
\begin{align*} 
	\calQ_{1}\left(\bbjedan_{A_{j}^c}\right)(z)
	& = \frac{1}{\lambda_0 \phi_0(z)}\sum_{y\in X}u_{1}(z,y)\bbjedan_{A_{j}^c}(y) \phi_0(y)\mu(y) \\
	&\leq\left(\sup_{y\in A_j^c}\frac{1}{V(y)}\right)\cdot\frac{c}{\lambda_0 p(z,x_0)}\sum_{y\in X}p(z,y) p(y,x_0)\mu(y)
	    \leq c\, C_* \sup_{y\in A_j^c}\frac{1}{V(y)}. 
\end{align*} 
This completes the proof of Part \ref{pIUC_erg-a}.

\medskip
Let us show Part \ref{pIUC_erg-b}. The equivalence of \eqref{eq:e_aux} and \eqref{eq:q-e_aux} follows from Proposition \ref{kappa_lem}. Using \eqref{eq:q-e_aux} with $f=\phi_0$, we can find $n_0 \in \nat$ such that
\begin{gather*}
	\sup_{x\in A_n} \left|\frac{\lambda_0^n\phi_0(x)}{\calU_n\bbjedan(x)}-\frac{\|\nu\|}{\left\|\hatphi_0\right\|_{\ell^1(\mu)}}\right|
	\leq \frac12\left\|\phi_0\right\|_{\ell^1(m)} = \frac12 \frac{\|\nu\|}{\left\|\hatphi_0\right\|_{\ell^1(\mu)}}, \quad n \geq n_0-1,
\end{gather*}
and  using \eqref{eq:q-e_aux} once more with $f=f_y = u_1(\cdot,y)$, $y \in X$, yields 
\begin{gather*}
	\sup_{x\in A_{n-1}} \left|\frac{u_n(x,y)}{\calU_{n-1}\bbjedan(x)}-\frac{\lambda_0 \hatphi_0(y)}{\left\|\hatphi_0\right\|_{\ell^1(\mu)}}\right|
	\leq \frac12\left\|u_1(\cdot,y)\right\|_{\ell^1(m)}  = \frac12 \frac{\lambda_0 \hatphi_0(y)}{\left\|\hatphi_0\right\|_{\ell^1(\mu)}}, \quad n \geq n_0, \ y \in X.
\end{gather*} 
(we may use the same $n_0$ in both conditions). These estimates entail 
\begin{gather*}
	\calU_n\bbjedan(x) \leq c_1 \lambda_0^n \phi_0(x),\quad x\in A_n, \ n \geq n_0-1,
\end{gather*}
and
\begin{gather*}
	u_n(x,y) \leq c_2 \lambda_0 \calU_{n-1}\bbjedan(x) \hatphi_0(y) \leq c_1 c_2 \lambda_0^n \phi_0(x) \hatphi_0(y) ,\quad n \geq n_0, \; x\in A_{n-1}, \; y \in X.
\end{gather*}
This is pIUC of the semigroup $\{\calU_n:n\in \nat_0\}$ for the shifted exhausting sequence $(A_{n-1})$. The proof for the  dual  semigroup is the same. 
\end{proof}

Our final goal in this section is to establish the equivalence between the aIUC property for the semigroup $\{\calU_n:n\in \nat_0\}$, the uniform geometric ergodicity of the intrinsic semigroup $\{\calQ_n:n\in \nat_0\}$ on $\ell^1(\nu)$, and the uniform geometric quasi-ergodicity of $\{\calU_n:n\in \nat_0\}$ on $\ell^1(m)$. All of these properties are, moreover, equivalent to the same statements for the  dual  semigroups.

\begin{theorem}\label{t:ergodic}
Assume \eqref{A1}, \eqref{A3} and \eqref{B}. 
The following three conditions are equivalent:
\begin{enumerate}
\item\label{t:ergodic-1} 
	The semigroup $\{\calU_n:n\in \nat_0\}$ is aIUC;
\item\label{t:ergodic-2} 
	The semigroup $\{\calQ_n:n\in \nat_0\}$ is geometrically uniformly ergodic on $\ell^1(\nu)$, that is there exist $C_1>0$ and $\rho \in (0,1)$ such that for all $f\in \ell^1(\nu)$ and for sufficiently large $n$ it holds
	\begin{gather}\label{eq:aIUC-ineq}
		\sup_{x\in X} \left|\calQ_n\,f(x)-\nubar (f)\right|
		\leq C_1 \rho^{n}\left\|f\right\|_{\ell^1(\nu)};
	\end{gather}
\item\label{t:ergodic-3} 
	The semigroup $\{\calU_n:n\in \nat_0\}$ is geometrically uniformly quasi-ergodic on $\ell^1(m)$, that is there exist $C_2>0$ and $\rho\in (0,1)$ such that for all $f\in \ell^1(m)$ and for sufficiently large $n$ it holds
	\begin{gather}\label{quasi_erg}
		\sup_{x\in X} \left|\frac{\calU_n\,f(x)}{\calU_n\bbjedan(x)}-m(f)\right|
		\leq C_2 \rho^n\Vert f\Vert_{\ell^1(m)}.
	\end{gather}
\end{enumerate} 
These equivalences remain true if one replaces $\calQ_n, \calU_n, m$ with $\ducalQ_n, \ducalU_n, \dum$. Moreover, any of the conditions \ref{t:ergodic-1}, \ref{t:ergodic-2}, and \ref{t:ergodic-3} is equivalent to the  corresponding statements for the  dual  semigroups $\{\ducalU_n:n\in \nat_0\}$ and $\{\ducalQ_n:n\in \nat_0\}$ and $\widehat m$.
\end{theorem}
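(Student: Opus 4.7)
The plan is to prove the cyclic chain \ref{t:ergodic-1}$\Rightarrow$\ref{t:ergodic-2}$\Leftrightarrow$\ref{t:ergodic-3}$\Rightarrow$\ref{t:ergodic-1}, exploiting Theorem \ref{pIUC_erg} to pass from pIUC to uniform (quasi-)ergodicity with explicit rates, and Proposition \ref{kappa_lem} to interchange uniform ergodicity of $\{\calQ_n\}$ on $\ell^1(\nu)$ with uniform quasi-ergodicity of $\{\calU_n\}$ on $\ell^1(m)$ at matching rates. The statements for the dual semigroups will then be immediate from \eqref{eq:equiv_aiuc}, which already tells us that condition \ref{t:ergodic-1} is self-dual.

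For \ref{t:ergodic-1}$\Rightarrow$\ref{t:ergodic-2}, assume aIUC($n_0$) for $\{\calU_n\}$. Then \eqref{eq:equiv_aiuc} yields aIUC($n_0$) for $\{\ducalU_n\}$, so both semigroups are pIUC($n_0$) with respect to the trivial exhaustion $A_n = X$ for $n \geq n_0$. In Theorem \ref{pIUC_erg}\ref{pIUC_erg-a} I would choose $l = j = n_0$ and $k = n-1-2n_0$; since $A_j^c = \emptyset$ the first term $\sup_{z \in A_j^c}(1/V(z))$ vanishes, and the bound collapses to $C_1 \rho^{n-1-2n_0}$, which is of the form $C_1' \rho^n$ for $n > 2n_0+1$. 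This is exactly \eqref{eq:aIUC-ineq}.

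The equivalence \ref{t:ergodic-2}$\Leftrightarrow$\ref{t:ergodic-3} will follow from Proposition \ref{kappa_lem} applied with $A_n = X$ for all $n$ and $\kappa(n) = \rho^n$. The direction \ref{t:ergodic-2}$\Rightarrow$\ref{t:ergodic-3} is immediate; for \ref{t:ergodic-3}$\Rightarrow$\ref{t:ergodic-2} one has to enlarge $n_0$ so that $\sup_{n \geq n_0} C_2 \rho^n < 1$, which only modifies the multiplicative constant and not the geometric rate. For the remaining implication \ref{t:ergodic-2}$\Rightarrow$\ref{t:ergodic-1}, observe that \eqref{eq:aIUC-ineq} with $A_n = X$ trivially implies \eqref{eq:e_aux}, so Theorem \ref{pIUC_erg}\ref{pIUC_erg-b} delivers pIUC of $\{\calU_n\}$ with respect to the trivial exhaustion $(A_{n-1}) = (X)$, i.e., aIUC.

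For the dual equivalences, the same cyclic argument applied to $\{\ducalU_n\}, \{\ducalQ_n\}, \dum$ establishes the corresponding three-way equivalence on the dual side; crossing between the two sides is free because \ref{t:ergodic-1} is self-dual by \eqref{eq:equiv_aiuc}. I expect no real conceptual obstacle -- the proof is modular and rests on previously proven results. The only care needed is in the bookkeeping of exponents and thresholds: choosing $l = j = n_0$ in Theorem \ref{pIUC_erg}\ref{pIUC_erg-a} so that a true geometric rate $\rho^n$ is recovered, and shifting $n_0$ upwards in Proposition \ref{kappa_lem} so that the contraction constant stays below $1$.
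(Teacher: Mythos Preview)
Your proposal is correct and follows essentially the same route as the paper's own proof: the paper likewise argues \ref{t:ergodic-1}$\Rightarrow$\ref{t:ergodic-2}$\Rightarrow$\ref{t:ergodic-3} via Theorem~\ref{pIUC_erg}\ref{pIUC_erg-a} and Proposition~\ref{kappa_lem} (using that aIUC is pIUC for the trivial exhaustion), closes the cycle with Theorem~\ref{pIUC_erg}\ref{pIUC_erg-b}, and transfers everything to the dual side through \eqref{eq:equiv_aiuc}. Your version is simply more explicit about the choice $l=j=n_0$, $k=n-1-2n_0$ and about enlarging $n_0$ so that $C_2\rho^{n_0}<1$ in the converse direction of Proposition~\ref{kappa_lem}.
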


\begin{proof}
Since aIUC implies pIUC we can apply Proposition \ref{kappa_lem} and Theorem \ref{pIUC_erg}.\ref{pIUC_erg-a} to conclude that \ref{t:ergodic-1}$\Rightarrow$\ref{t:ergodic-2}$\Rightarrow$\ref{t:ergodic-3}. The implication \ref{t:ergodic-3}$\Rightarrow$\ref{t:ergodic-1} follows from Theorem \ref{pIUC_erg}.\ref{pIUC_erg-b}. From \eqref{eq:equiv_aiuc} we conclude that we have both the equivalence of \ref{t:ergodic-1}--\ref{t:ergodic-3} for the  dual  semigroups and the equivalence of all statements for the original and the  dual  semigroups.
\end{proof}

So far, we have studied  the uniform ergodic and quasi-ergodic properties on $\ell^\infty$ and $\ell^1$, the two endpoints in the chain \eqref{eq:inclusions}. Due to these inclusions, the results from Theorems \ref{pIUC_erg}, \ref{t:ergodic} extend from $\ell^1(\nu)$, $\ell^1(m)$, $\ell^1(\dum)$ to $\ell^p(\nu)$, $\ell^p(m)$, $\ell^p(\dum)$, $1 < p \leq \infty$ with the same ($\ell^1$-)rates of convergence. 
However, as we will see in Section \ref{sec:ex}, the rate of convergence in the progressive case (Theorem \ref{pIUC_erg}.\ref{pIUC_erg-a}) may be much slower than geometric. We close this section by noting that the progressive uniform ergodicity of the intrinsic semigroups on $\ell^p(\nu)$, for $p$ arbitrarily close to $1$, is still geometric. This is obtained by interpolation. 

\begin{proposition}\label{prop-riesz-thorin}
	Let \eqref{A1}, \eqref{A3} and \eqref{B} hold. Assume that both semigroups $\{\calU_n:n\in \nat_0\}$ and $\{\ducalU_n:n\in \nat_0\}$ are pIUC with respect to some exhausting sequence $(A_n)$. Fix $p \in (1,\infty)$. Then there exist $C >0$ and $\rho = \rho(p) \in (0,1)$ such that for sufficiently large $n \in \nat$ we have
	\begin{gather*}
		 \sup_{x\in A_{n-1}} \sup_{ \Vert f\Vert_{\ell^p (\nu)}\leq 1} \left|\calQ_n\,f(x)-\nubar (f)\right|
		\leq C \rho^n.
	\end{gather*}
	This statement remains true for the  dual  semigroup $\{\ducalQ_n:n\in \nat_0\}$. 
\end{proposition}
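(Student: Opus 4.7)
The plan is to prove the result by Riesz--Thorin interpolation between the endpoints $p = \infty$ and $p = 1$. Define the linear operators
\begin{equation*}
T_n f(x) := \bbjedan_{A_{n-1}}(x)\bigl(\calQ_n f(x) - \nubar(f)\bigr), \quad x \in X, \; n \in \nat,
\end{equation*}
and observe that the quantity to be controlled equals the operator norm $\|T_n\|_{\ell^p(\nu) \to \ell^\infty(\nu)}$. Hence it suffices to show that this norm decays geometrically in $n$ for each fixed $p \in (1,\infty)$.

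For the $\ell^\infty$--endpoint, Theorem \ref{thm:unif-ergodic} (whose hypotheses \eqref{A1}, \eqref{A3} and \eqref{B} are assumed here) yields directly
\begin{equation*}
\|T_n\|_{\ell^\infty(\nu) \to \ell^\infty(\nu)} \leq C_\infty \kappa^n
\end{equation*}
for some $C_\infty > 0$ and $\kappa \in (0,1)$ independent of $n$. For the $\ell^1$--endpoint I would use pIUC. Rewriting Definition \ref{def:piuc} in terms of the intrinsic kernel gives $q_n(x,y) \leq c$ for every $x \in A_n$, every $y \in X$ and all $n \geq n_0$, for some constant $c$. Since $A_{n-1} \subseteq A_n$, this implies for every $f \in \ell^1(\nu)$ with $\|f\|_{\ell^1(\nu)} \leq 1$ and every $x \in A_{n-1}$,
\begin{equation*}
|\calQ_n f(x)| \leq \sum_{y \in X} q_n(x,y) |f(y)|\, \nu(y) \leq c, \qquad |\nubar(f)| \leq \|\nu\|^{-1},
\end{equation*}
so that $\|T_n\|_{\ell^1(\nu) \to \ell^\infty(\nu)} \leq c + \|\nu\|^{-1} =: M$, a bound that is uniform in $n$ (it need not decay).

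Applying the Riesz--Thorin theorem to $T_n$ between the source-target pairs $(\ell^\infty,\ell^\infty)$ and $(\ell^1,\ell^\infty)$ with interpolation parameter $\theta = 1/p$ yields
\begin{equation*}
\|T_n\|_{\ell^p(\nu) \to \ell^\infty(\nu)} \leq \|T_n\|_{\ell^\infty \to \ell^\infty}^{\,1-1/p}\, \|T_n\|_{\ell^1 \to \ell^\infty}^{\,1/p} \leq (C_\infty \kappa^n)^{1-1/p} M^{1/p} = C\rho^n,
\end{equation*}
with $C := C_\infty^{1-1/p} M^{1/p}$ and $\rho := \kappa^{1-1/p} \in (0,1)$. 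This is the desired geometric estimate. The argument for $\{\ducalQ_n\}$ is identical: Theorem \ref{thm:unif-ergodic} applies to the dual chain, and the pIUC hypothesis for $\{\ducalU_n\}$ produces the analogous uniform $\ell^1 \to \ell^\infty$ bound via $\duq_n(x,y) \leq c$ on $A_{n-1} \times X$.

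There is no real obstacle. The only point worth emphasising is that pIUC supplies merely a \emph{uniform-in-$n$} (not geometric) $\ell^1(\nu) \to \ell^\infty(\nu)$ bound on $A_{n-1}$, whereas the $\ell^\infty \to \ell^\infty$ bound from Theorem \ref{thm:unif-ergodic} is already geometric; interpolation transfers the geometric rate to all intermediate $\ell^p$, at the cost of degrading the decay rate from $\kappa$ to $\kappa^{1-1/p}$. This explains why the rate $\rho$ depends on $p$ and tends to $1$ as $p \downarrow 1$, in agreement with the fact that, in the progressive regime, the rate at the $\ell^1$ endpoint need not be geometric at all (cf.\ Theorem \ref{pIUC_erg}).
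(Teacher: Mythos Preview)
Your proof is correct and follows essentially the same approach as the paper: Riesz--Thorin interpolation between the geometric $\ell^\infty\!\to\!\ell^\infty$ bound from Theorem~\ref{thm:unif-ergodic} and a uniform-in-$n$ $\ell^1\!\to\!\ell^\infty$ bound coming from pIUC. The paper obtains the latter by citing the proof of Theorem~\ref{pIUC_erg}.\ref{pIUC_erg-a}, while you extract it directly from the kernel bound $q_n(x,y)\le c$ on $A_n\times X$; these are equivalent, and your formulation via the cut-off operator $T_n=\one_{A_{n-1}}(\calQ_n-\nubar)$ is just a different way of restricting to $\ell^\infty(A_{n-1},\nu)$.
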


\begin{proof}
We consider the operator $\mathcal{T}_n f = \calQ_n f - \nubar(f)$. By Theorem \ref{thm:unif-ergodic}, for sufficiently large $n$, $\mathcal{T}_n : \ell^{\infty}(X,\nu) \to \ell^\infty(X,\nu)$ is a bounded operator and $\ell^\infty(X,\nu) \hookrightarrow \ell^{\infty}(A_{n-1},\nu)$ is the canonical embedding, so $\Vert\mathcal{T}_n\Vert_{\ell^{\infty}(X,\nu) \to \ell^{\infty}(A_{n-1},\nu)} \leq C_1 \kappa^n$, for some $C_1>0$ and $\kappa \in (0,1)$. Moreover, by the proof of Theorem \ref{pIUC_erg}.\ref{pIUC_erg-a}, for sufficiently large $n$, the operator $\mathcal{T}_n : \ell^1(X,\nu) \to \ell^{\infty}(A_{n-1},\nu)$ is bounded as well, and $\Vert\mathcal{T}_n\Vert_{\ell^1(X,\nu) \to \ell^{\infty}(A_{n-1},\nu)} \leq C_2$. By the Riesz--Thorin interpolation theorem \cite[Ch.~4, Cor.~1.8]{Bennett-Sharpley}, $\mathcal{T}_n : \ell^p(X,\nu) \to \ell^{\infty}(A_{n-1},\nu)$ is also a bounded operator for such $n$'s and 
\begin{gather*}
	\Vert\mathcal{T}_n\Vert_{\ell^p(X,\nu) \to \ell^{\infty}(A_{n-1},\nu)} 
	\leq C_1^{1-1/p} (\kappa^n)^{1-1/p} C_2^{1/p} 
	=:  C_1^{1-1/p} C_2^{1/p} \rho^n,
\end{gather*}
which yields the desired estimate.
\end{proof}

\section{Examples} \label{sec:ex}

In this section we illustrate the results obtained above for some specific classes of discrete-time processes and confining potentials. The discussion is split into three parts. In Section \ref{sebsec:ex1} we discuss polynomially and exponentially decaying kernels $P$, and exponentially, polynomially and logarithmically growing confining potentials $V$. We show how the (intrinsic) contractivity properties of the discrete-time semigroups depend on the decay rate of $P$ and the growth rate of $V$. In Section \ref{sebsec:ex2} we  provide  examples of probability kernels $P$ that satisfy our assumptions. Finally, in Section \ref{sebsec:ex3}, we justify our claims from Section \ref{sebsec:ex1}. 

\subsection{ Overview} \label{sebsec:ex1} 

Let $x_0 \in X$ and let $(B_n)_{n\in \nat}$ be an exhausting sequence of $X$. We assume that all sets $B_n$ are finite and satisfy $B_{n+1} \setminus B_n \neq \emptyset$, $n \in \nat$. Further, we assume that for all $ n \in \nat$ and $ x \in X,$
\begin{gather} \label{eq:P_R}
	P(x,x_0) \asymp P(x_0,x) \asymp R(1) \cdot \one_{B_1}(x) + \sum_{n=1}^{\infty} R(n+1) \cdot \one_{B_{n+1} \setminus B_n}(x) = R(\alpha (x)),
\end{gather}
where $R:\nat \to (0,1]$ is a decreasing \emph{profile} function and $\alpha (x)$ is the first appearance function, see Definition \ref{def:exh_fam}.
Below we consider two types of profiles:
\begin{itemize}
\item $R(n) = n^{-\beta}$, 
\item $R(n) = e^{-\kappa n} n^{-\beta}$,
\end{itemize}
where $\beta>0$ and $\kappa>0$. The kernels $P$ corresponding to such profiles are referred to as \emph{polynomial} and \emph{exponential} one-step transition probabilities with respect to the exhaustion $(B_n)_{n\in \nat}$. The sets $B_n$ may grow at an arbitrary speed, but if $\# B_n \asymp n^{d}$ for some $d>0$ and large values of $n$ (e.g.\ $X = \integer^d$ and the $B_n$ are metric balls), then the names \emph{polynomial} and \emph{exponential} kernels can be understood in a proper sense. If $(X,\delta)$ is a metric space such that the distance $\delta:X \times X \to \nat$ is a surjective function, then (typically) $B_n= \left\{x \in X: \delta(x,x_0) \leq n \right\}$, $n \in \nat$. In general, the kernel $P$ may be independent of the metric structure of the space. We assume that the parameter $\beta>0$ and the sequence $(B_n)_{n\in \nat}$ are such that \eqref{A1}--\eqref{A3} hold; this will be discussed in detail for concrete examples in Section \ref{sebsec:ex2}.

In the examples we will consider confining potentials $V$ of the form
\begin{gather} \label{eq:V_W}
	V(x) = W(1) \cdot \one_{B_1}(x) +  \sum_{n=1}^{\infty}W(n+1) \cdot \one_{B_{n+1} \setminus B_n}(x)
	=W(\alpha(x)), \quad n \in \nat, \; x \in X,
\end{gather}
where the increasing profile $W:\nat \to (0,\infty)$ takes one of the following three different shapes:
\begin{itemize} 
\item $W(n) = \log^{\rho} (1+n)$,
\item $W(n) = n^{\rho}$,
\item $W(n) = \exp(\rho \, n)$,
\end{itemize}
for some $\rho>0$. Clearly, these potentials are increasing with respect to the sequence $(B_n)_{n\in \nat}$.

The main results for  such  kernels $P$ and potentials $V$, including the aIUC/pIUC properties and (quasi\nobreakdash-)ergodicity, are  summarized in Table \ref{Table}. The detailed justification of these examples, for each combination of $R$ and $W$, is deferred to Section \ref{sebsec:ex3}. 
\newpage
\begin{table}[h]\centering 
	\caption{Overview on results for various types of kernels $P$ and potentials $V$}\label{Table}
	\begin{tabular}{@{}c%
			@{\hskip 20pt}
			>{\def\fullwidthdisplay{\displayindent0pt \displaywidth\hsize}}p{.375\textwidth}%
			@{\hskip 40pt}
			>{\def\fullwidthdisplay{\displayindent0pt \displaywidth\hsize}}p{.375\textwidth}%
			@{}}\hlinewd{1.5pt}\midrule
		$\displaystyle W(n)$ 
		& \mbox{}\hfill$\displaystyle R(n) = n^{-\beta}$\hfill\mbox{} 
		& \mbox{}\hfill$\displaystyle R(n) = e^{-\kappa n} n^{-\beta}$\hfill\mbox{} 
		\\\toprule
		$\displaystyle \log^{\rho} (1+n)$ 
		& {pIUC with respect to the exhaustion $(B_{k(n)})_{n\in \nat}$, where $k(1) = 1$ and 
		\begin{gather*}
				k(n)=\left\lfloor 2^{\frac{-1}{2\beta}}\left(\frac{(n-1)\rho}{\widetilde C^{1/\rho}}\right)^{\frac{(n-1)\rho}{2\beta}}\right\rfloor \vee 1, 
		\end{gather*} for $n \geq 2$ and some $\widetilde C>0$. 
		\newline\medskip\par 
		The rate of convergence in Theorem \ref{pIUC_erg} for the exhaustion $(B_{k(n)})_{n\in \nat}$ is
		\begin{gather*}
			\sup_{z\in B_{k(m)}^c}\frac{1}{V(z)} \asymp \left(\frac{1}{m \log m}\right)^{\rho}
		\end{gather*} 
		for $m$ large enough.} 
		& {pIUC with respect to the exhaustion $(B_{k(n)})_{n\in \nat}$, where $k(1) = 1$ and 
		\begin{gather*}
			k(n)=\left\lfloor \frac{(n-1)\rho \log a}{2\kappa+\epsilon}\right\rfloor \vee 1, 
			\vphantom{k(n)=\left\lfloor 2^{\frac{-1}{2\beta}}\left(\frac{\gamma(n-1)}{\widetilde C^{1/\gamma}}\right)^{\frac{\gamma(n-1)}{2\beta}}\right\rfloor \vee 1,}
		\end{gather*} 
		for $n \geq 2$, $\epsilon >0$ and some $a>1$. 
		\newline\medskip\par 
		The rate of convergence in Theorem \ref{pIUC_erg} for the exhaustion $(B_{k(n)})_{n\in \nat}$ is
		\begin{gather*}
			\sup_{z\in B_{k(m)}^c}\frac{1}{V(z)} \asymp \left(\frac{1}{\log m}\right)^{\rho},
		\end{gather*}
		for $m$ large enough.} 
		\\ \midrule
		$\displaystyle n^\rho$ 
		& {aIUC; \newline Theorem \ref{t:ergodic} applies and the rate of convergence is geometric.} 
		& {pIUC with respect to the exhaustion $(B_{k(n)})_{n\in \nat}$, where $k(1) = 1$ and 
			\begin{gather*}
				k(n)=\left\lfloor \frac{(n-1)\rho\log\left(\frac{(n-1)\rho}{\widetilde C^{1/\rho}}\right)}{2\kappa+\epsilon}\right\rfloor \vee 1,
			\end{gather*} 
			for $n \geq 2$, $\epsilon >0$ and some $\widetilde C>0$. 
			\newline\medskip\par 
			The rate of convergence in Theorem \ref{pIUC_erg} for the exhaustion $(B_{k(n)})_{n\in \nat}$ is
		\begin{gather*}
			\sup_{z\in B_{k(m)}^c}\frac{1}{V(z)} \asymp \left(\frac{1}{m \log m}\right)^{\rho},
		\end{gather*}
		for $m$ large enough.}
 		\\ \midrule
		$\displaystyle \exp(\rho n)$ 
		& {aIUC; \newline Theorem \ref{t:ergodic} applies and the rate of convergence is geometric.} 
		& {aIUC; \newline Theorem \ref{t:ergodic} applies and the rate of convergence is geometric.} \\[3pt]\hlinewd{1.5pt}
	\end{tabular}
\end{table}

\subsection{Examples of transition kernels satisfying our assumptions} \label{sebsec:ex2} 

The examples presented below are constructed using a general setup. The starting point is an infinite, not necessarily symmetric,  matrix $\left(\mu(x,y)\right)_{x,y\in X}$ with non-negative entries and such that
\begin{gather}\label{ass-i-ii}
	\forall x\in X\::\: 0 < \sum_{y\in X}\mu(x,y) = \sum_{y\in X} \mu(y,x) < \infty
\end{gather}
By $\dumu(x,y):=\mu(y,x)$ we denote the transposed (adjoint) matrix and we define
\begin{gather*}
	\mu(x)  := \sum_{y\in X}\mu(x,y),
	\qquad
 \dumu(x):	= \sum_{y\in X}\dumu(x,y).
\end{gather*}
The probability kernels $P(x,y)$ and $\Phat(x,y)$ are then defined by
\begin{gather*}
	P(x,y) :=\frac{\mu(x,y)}{\mu(x)}
	\quad\text{and}\quad 
	\Phat(x,y) := \frac{\dumu(x,y)}{\dumu(x)}.
\end{gather*}
By  construction  we have $\mu(x) = \dumu(x)$, $x \in X$, and therefore the measure $\mu$ and the kernels $P, \Phat$ satisfy the duality  relation  \eqref{cond:duality}. Note that if $\mu(x,y) = \mu(y,x) = \dumu(x,y)$, then $\Phat = P$ and \eqref{cond:duality} reduces to the detailed balance condition.
 
\subsubsection{Markov chains on discrete metric spaces} \label{sec:discrete-metric} Let $(X,\delta)$ be a metric space such that the distance $\delta:X \times X \to \nat$ is a surjective function, and let $J, K:[0,\infty)\to (0,\infty)$ be decreasing functions such that 
\begin{gather*}
	J(r)\leq c_1 J(2r),\quad r>0,
\intertext{and}
	K(r)K(s) \leq c_2 K(r+s), \quad r,s >0,
\end{gather*}
for some constants $c_1, c_2>0$. Moreover, we assume that there are constants $c_3, c_4, c_5$ such that 
\begin{align}\label{eq:integrability}
0 < c_3 \leq \sum_{y \in X} J(\delta(x,y))K(\delta(x,y)) \leq c_4 \sum_{y \in X} J(\delta(x,y))  \leq c_5 < \infty, \quad x \in X. 
\end{align}
We define
\begin{align*}
\mu(x,y):= J(\delta(x,y)) K(\delta(x,y)),\quad x,y\in X.
\end{align*}
Observe that $\mu(x,y) = \mu(y,x)$, $x,y \in X$  holds  and \eqref{ass-i-ii} is satisfied. Moreover, \eqref{A1} follows from \cite[Proposition 4.1, Corollary 4.2]{CKS-ALEA} and \eqref{eq:integrability}, \eqref{A2} holds because of $J(0)K(0) >0$ and the upper bound in \eqref{eq:integrability}, and \eqref{A3} is a consequence of boundedness of $J, K$ (being decreasing functions) and the lower bound in \eqref{eq:integrability}.

One can fix $x_0 \in X$ and take $R(n) := (\left\|J\right\|_{\infty} \left\|K\right\|_{\infty})^{-1}  J(n)K(n)$ and $B_n:= B_n(x_0)$ in \eqref{eq:P_R}, where $B_n(x)= \left\{z \in X: \delta(z,x) \leq n \right\}$. For instance, if $\# B_n(x) \asymp n^{d}$ for some $d \geq 1$ (uniformly in $x \in X$, at least for large values of $n$), then by taking $J(r) = r^{-\beta}$, $K(r) = e^{-\kappa r}$, for $\beta > d$ and $\kappa \geq 0$, we obtain the transition kernels with the shapes discussed in Section \ref{sebsec:ex1}. Such an arrangement already provides the regularity needed for \eqref{eq:integrability}.

This  set-up  covers many important examples of discrete metric spaces such as integer lattices $\integer^d$ (with various distances), nested graphs (i.e.\ self-similar graphs formed by vertices of unbounded nested fractals embedded in 
Euclidean spaces, see e.g.\ \cite{Nieradko-Olszewski} and references therein for basic definitions), and some other infinite graphs with geodesic distance \cite{Murugan_Saloff-Coste} or general uniformly discrete metric spaces \cite{Murugan_Saloff_2}. 

\subsubsection{Subordinate discrete-time processes on infinite graphs} \label{sec:subordinate}
Let $G$ be a connected graph with finite geometry over $X$, as introduced in Section \ref{sec:NNRW}. Let $\{Z_n: n \geq 0\}$ be a nearest neighbour Markov chain on $G$, i.e.\ a time-homogeneous discrete-time Markov process with one-step transition probabilities $Q(x,y)$ satisfying the condition \eqref{eq:nnrw_def}, and  let $\{\tau_n:n \geq 0 \}$ be an arbitrary increasing random walk with values in $\nat_0$ and $\tau_0=0$ (i.e.\ the steps $\tau_{n+1}-\tau_n$, $n=0,1,2.\ldots$ are i.i.d.\ random variables with values in $\nat_0$). Moreover, we assume that  $\{\tau_n:n \geq 0 \}$  is independent of $\{Z_n: n \geq 0\}$. We call $\{\tau_n:n \geq 0 \}$ a \emph{discrete subordinator}.
The \emph{subordinate Markov chain} $\{Y_n: n \geq 0\}$ is then defined as  random time change: 
\begin{gather*}
		Y_n:= Z_{\tau_n}, \quad n=0,1,2,\dots . 
\end{gather*} 
It is not hard to see that the  process $\{Y_n: n \geq 0\}$ is again a time-homogeneous Markov chain. 

The nearest neighbour transition probabilities $Q$ are typically constructed from the symmetric conductances (weights) over $G$ in a similar way as presented at the beginning of Section \ref{sebsec:ex2}, see e.g.\ Barlow \cite{Barlow_book} or Kumagai \cite{Kumagai}.
There always exists a measure $\mu: X \to (0,\infty)$ such that $\mu(x) Q(x,y) = \mu(y) Q(y,x)$, that is the detailed balance condition holds. 

The one-step transition probabilities $P(x,y)$ of the process $\{Y_n: n \geq 0\}$ are given by
\begin{gather} \label{eq:subord_P}
	P(x,y) = \sum_{n=1}^{\infty} Q_n(x,y) \Pp (\tau_1=n),
\end{gather}
and it follows directly from this formula and the detailed balance of $Q$ that $\mu(x) P(x,y) = \mu(y) P(y,x)$. 

By taking
$\mu(x,y):= \mu(x) P(x,y)$, we can observe that we are formally still in the framework that was described at the beginning 
of Section \ref{sebsec:ex1}; in particular, \eqref{ass-i-ii} trivially holds. The assumption \eqref{A1} is satisfied as long as we know that there is some $c>0$ such that
\begin{align} \label{eq:subord_dsp}
	\Pp (\tau_2=n) \leq c \Pp (\tau_1=n), \quad n \geq 2,
\end{align}
(in this case we say that the subordinator $\{\tau_n:n \geq 0 \}$ has the DSP), and there exists  some  $n_0 \in \nat$ such that 
$\Pp(\tau_1=n)>0$, for all $n \geq n_0$ \cite[Lemma 4.3, Corollary 4.4]{CKS-ALEA}. Furthermore, \eqref{A2} follows from \eqref{eq:subord_P} under the assumption that the kernel $Q$ satisfies the condition \eqref{eq:nnrw_add_ass}, and \eqref{A3} holds if we know that e.g.\ $\inf_{x \in X} \mu(x) > 0$. The most important examples of discrete subordinators that fit our settings are stable and relativistic subordinators, which are discussed in detail in \cite[pp.~1094-1095]{CKS-ALEA}. This allows one to study the Feynman--Kac semigroups corresponding to fractional powers of the nearest-neighbour Laplacians on graphs $-(\id - Q)^{\alpha/2}$, $\alpha \in (0,2)$, and their relativistic variants $-(\id - Q+m^{2/\alpha})^{\alpha/2}+m$, $\alpha \in (0,2)$, $m>0$. 

In order to identify the sequences $(R(n))_{n \in \nat}$ and $(B_n)_{n \in \nat}$ in \eqref{eq:P_R} one has to find reasonably good bounds for the series in \eqref{eq:subord_P}. This is often possible in the case of discrete metric spaces where the estimates are given in terms of functions of a distance (see e.g.\ \cite{Cygan-Sebek,Grzywny-Trojan} and \cite[Sec.\ 4.2]{CKS-ALEA}), in a similar way as it was done in Section \ref{sec:discrete-metric}.

\subsubsection{Probability kernels on product spaces}\label{sec:product} 
Let $\left(\mu_1(x_1,y_1)\right)_{x_1,y_1\in X_1}$, $\left(\mu_2(x_2,y_2)\right)_{x_2,y_2\in X_2}$ be infinite matrices satisfying \eqref{ass-i-ii} and leading to probability kernels $P_1(x_1,y_1)$, $P_2(x_2,y_2)$ (and $\Phat_1(x_1,y_1)$, $\Phat_2(x_2,y_2)$) that satisfy assumptions \eqref{A1}-\eqref{A3} and the condition \eqref{cond:duality} with the measures $\mu_1$, $\mu_2$, as explained at the beginning of Section \ref{sebsec:ex2}. 

Then the probability kernel $P(x,y) = P_1(x_1,y_1)P_2(x_2,y_2)$, $x,y \in X_1 \times X_2$ (which can also be formally constructed from $\mu_1(x_1,y_1)\mu_2(x_2,y_2)$) satisfies \eqref{cond:duality} and the assumptions \eqref{A1}-\eqref{A3} with the measure $\mu(x)= \mu_1(x_1)\mu_2(x_2)$. Moreover, $P$ satisfies \eqref{eq:P_R} with the sequences $(R(n))_{n \in \nat}$ and $(B_n)_{n \in \nat}$ such that $R(n)=R^{(1)}(n)R^{(2)}(n)$ and $B_n = B^{(1)}_n \times B^{(2)}_n$, where $(R^{(i)}(n))_{n \in \nat}$, $(B^{(i)}_n)_{n \in \nat}$ corresponds to the kernel $P_i$, $i=1,2$. This construction easily extends to finite products of discrete spaces, including integer lattices and products of more general graphs. 

We remark that the kernels $P_i$ may be drastically different, e.g.\ $P_1$ could decay polynomially, while $P_2$ decays exponentially. This shows that our framework covers much more general examples than those discussed in Sections \ref{sec:discrete-metric}, \ref{sec:subordinate}. Examples of this type can be seen as discrete-time counterparts of the continuous-time semigroups that were recently studied by Kulczycki and Sztonyk \cite{Kulczycki-Sztonyk}.

\subsubsection{Non-reversible Markov chains} Our framework also accommodates examples for which $\mu(x,y) \neq \mu(y,x)$ or, equivalently, $\Phat(x,y) \neq P(x,y)$. Non-symmetry may enter the picture in various ways. To illustrate this, we give a simple example of a transition kernel on $\integer$, which works well in our settings. This example can be generalized in various directions. Assume that $\mu(x,y)=k(y-x)$ for a function $k : \integer \to (0,\infty)$ such that $k(-x) \neq k(x)$ for all $x\in X$, and $\left\|k\right\|_{\ell^1(\integer)} = \sum_{x \in \integer} k(x) < \infty$. Since 
\begin{gather*}
	\mu(x) 
	= \sum_{y\in \integer}k(y-x) 
	= \sum_{y\in \integer}k(y) 
	= \left\|k\right\|_{\ell^1(\integer)} < \infty, \quad x \in \integer,
\end{gather*}
it is clear that \eqref{ass-i-ii} is satisfied and $\dumu = \mu$. Moreover, 
\begin{gather*}
	P(x,y) = \frac{1}{\left\|k\right\|_{\ell^1(\integer)}} k(y-x), 
	\quad 
	\Phat(x,y) = \frac{1}{\left\|k\right\|_{\ell^1(\integer)}} k(x-y) 
	= \frac{1}{\left\|k\right\|_{\ell^1(\integer)}} k(-(y-x)),
\end{gather*}
and \eqref{cond:duality} holds. It is also straightforward to check that \eqref{A1}, \eqref{A2} hold.

We give two examples of  a  function $k$ for which \eqref{A3} is also satisfied:
\begin{itemize}
\item 
	$k(x) = l(|x+x_0|)$, for a fixed $0 \neq x_0 \in \integer$, or 
\item 
	$k(x) = \left(\eta \one_{\left\{x<0\right\}} + \one_{\left\{x=0\right\}} +  (1-\eta) \one_{\left\{x>0\right\}}\right) l(|x|)$, for some $\eta \in (0,1)\setminus\{\frac 12\}$,
\end{itemize}
where $l:\nat_0 \to (0,\infty)$ is such that $\left\|l\right\|_{\ell^1(\nat_0)} = 1$ and
\begin{gather}\label{l_DSP}
	\sum_{z\in \integer }l(|z-x)|)l(|y-z|)\leq c\,l(|y-x|), 
	\quad  x, y\in \integer,
\end{gather}
for a constant $c>0$. Depending on the type of $k$, we get with \eqref{l_DSP}
\begin{align*}
	\sum_{z\in \integer}k(z-x)k(y-z) 
	&= \sum_{z\in \integer}l(|z-(x-x_0)|)l(|y+x_0-z|)\\ 
	&\leq c l(|y-x+2x_0|)\\ 
	&= c k(y-x+x_0)
\intertext{or}
	\sum_{z\in \integer}k(z-x)k(y-z) 
	\leq \sum_{z\in \integer}l(|z-x|)l(|y-z|)
	&\leq c l(|y-x|)\\ 
	&\leq \frac{c}{\eta \wedge (1-\eta)} k(y-x).
\end{align*}
In the first case, we still have to apply \cite[Lemma 2.1]{{CKS-ALEA}} to obtain that $k(x+x_0)\asymp k(x)$, $x \in X$. With this, we see that in  either  case \eqref{A3} holds. 

\subsection{Justification of the examples presented in Section \ref{sebsec:ex1}} \label{sebsec:ex3} 

It is convenient to embed the examples of $P$ and $V$ (with the profiles $R$ and $W$, respectively, see \eqref{eq:P_R} and \eqref{eq:V_W}) in a slightly more general framework, in which the confining potential $V$ depends on the kernel $P$ in a sufficiently regular way. Set
\begin{gather*}
	r(x) := 2 R^{-2}(\alpha (x)),\quad x \in X,
\end{gather*}
so that there is a constant $C_0 \geq 1$ satisfying
\begin{gather}\label{eq:reg_P} 
  C_0^{-1} r(x) \leq \frac{1}{P(x,x_0)P(x_0,x)} \leq C_0 r(x), \quad x \in X.
\end{gather} 
In addition, we assume that there are an increasing \enquote{profile} function $h:(1,\infty) \to (0,\infty)$ such that $\lim_{r\to\infty} h(r) = \infty$ and constants $0<C_1 \leq C_2$ such that 
\begin{gather} \label{eq:reg_pot}
	C_1 h(r(x)) \leq V(x) \leq C_2 h(r(x)), \quad x \in X.
\end{gather} 
Because of \eqref{cond:duality} one has $P(x,x_0)P(x_0,x) = \Phat(x,x_0)\Phat(x_0,x)$, so \eqref{eq:reg_P} can be equivalently stated in terms of $\Phat$. 

In Sections \ref{subsec:power-type}, \ref{subsec:power-log} and \ref{subsec:power-log-log} below we analyse the cases of polynomially and logarithmically (or slower than logarithmically) growing profiles $h$, leading to aIUC and pIUC, respectively. These cases cover almost all possible situations. Based on these findings, we give in Section \ref{sec:calc-ex}  a  detailed justification for the examples  mentioned in  Section \ref{sebsec:ex1}.

\subsubsection{Power-type profiles and aIUC}\label{subsec:power-type}
Assume that 
\begin{gather*}
	h(r) = r^{\gamma}, \quad r>1, \quad\text{for some\ \ } \gamma >0.
\end{gather*}
We are going to show that this choice leads to aIUC. Indeed, by \eqref{eq:reg_P}--\eqref{eq:reg_pot} we see that
\begin{align*}
	P(x,x_0)P(x_0,x) 
	&\geq \frac{1}{C_0}\left(\frac{1}{h(r(x))}\right)^{1/\gamma}\\
	&\geq \frac{C_0^{-1}C_1^{1/\gamma}}{V(x)^{1/\gamma}}\\ 
	&= C_0^{-1}C_1^{1/\gamma}V(x)^{\left\lceil 1/\gamma \right\rceil-1/\gamma}\frac{1}{V(x)^{\left\lceil 1/\gamma \right\rceil}}, \quad x \in X.
\end{align*}
In particular,
\begin{gather*}
	\frac{C}{V(x)^{n_0-1}} \leq P(x,x_0)P(x_0,x), \quad x \in X,
\intertext{with}
	n_0 := \left\lceil 1/\gamma \right\rceil+1 \geq 2 
	\quad\text{and}\quad 
	C := C_0^{-1}C_1^{1/\gamma} \inf\left\{(1 \wedge V(x))^{\left\lceil 1/\gamma \right\rceil-1/\gamma}: x \in X\right\} >0;
\end{gather*}
this shows that the condition \eqref{eq:V} holds. Consequently, by Theorem \ref{aIUC}, both semigroups $\{\calU_n : n\in \nat_0\}$ and $\{\widehat{\calU}_n: n\in \nat_0\}$ are aIUC($n_0$).

\subsubsection{Power-logarithmic-type profiles and pIUC}\label{subsec:power-log} 
Assume that 
\begin{gather*}
	h(r) = \log^{\gamma} r, \quad r>1, \quad\text{for some\ \ } \gamma >0.
\end{gather*}
We will show that in this case the Feynman--Kac semigroups $\{\calU_n : n\in \nat_0\}$ and $\{\widehat{\calU}_n: n\in \nat_0\}$ are both pIUC, but not aIUC. In order to do this, we will bound the sets $D_n$, $n \in \nat$, which appear in Theorem \ref{th:pIUC}. For technical reasons, it is convenient to consider the sets $E_n = D_n \setminus D$, $n \geq 2$. Observe that \eqref{eq:reg_P}--\eqref{eq:reg_pot} imply $E^{(1)}_n \subseteq E_n \subseteq E^{(2)}_n$, $n \geq 2$, for the sets
\begin{align*}
	E^{(i)}_n:= \left\{z\in X: C_i \lambda_0\,\log^{\gamma} r(z) \geq  C_0^{(-1)^{i-1}} Cr(z)^{\frac{1}{n-1}} \right\}, \quad i=1,2.
\end{align*}
Let $n \geq 2$. The condition defining the sets $E_n^{(i)}$, $i=1,2$, can be expressed as
\begin{gather}\label{eq:log}
	f_n(u) := \widetilde C^{1/\gamma}\,u^{\frac{1}{\gamma(n-1)}}-\log u\leq 0, \quad u>1,
\end{gather}
where $\widetilde C = C_0C/(C_1\lambda_0)$ or $\widetilde C = C/(C_0C_2\lambda_0)$, respectively. In order to find the extrema of the function $f_n$, we differentiate it
\begin{align*}
	f_n'(u)  
	= \frac{\widetilde C^{1/\gamma}}{\gamma(n-1)}\,u^{\frac{1}{\gamma(n-1)}-1}-\frac{1}{u} 
    =\frac{1}{u}\left(\frac{\widetilde C^{1/\gamma}}{\gamma(n-1)}\,u^{\frac{1}{\gamma(n-1)}}-1\right)
\end{align*}
and solve $f_n'(u)=0$ for $u$. We get
\begin{gather*}
 u = u_n=\left(\frac{\gamma(n-1)}{\widetilde C^{1/\gamma}}\right)^{\gamma(n-1)}.
\end{gather*}
Since $f_n$ is decreasing on $(1,u_n)$ and increasing on $(u_n,\infty)$, we see that $f_n$ has a minimum at $u_n$, and $f_n(u_n)<0$. 
Moreover, $f_n(u) \to \infty$ as $u \to \infty$.
By continuity, these properties imply that there exist sequences $(a_n)_{n \geq 2}$ and $(b_n)_{n  \geq 2}$ such that $1 < a_n < u_n < b_n$ and $f_n(a_n)=f_n(b_n)=0$. Moreover, $a_n$ is decreasing and $b_n \to \infty$ as $n \to \infty$. As $\# D < \infty$, this means that $D_n \neq X$ and $\# D_n < \infty$, for all $n \geq 2$. In particular, aIUC cannot hold. Furthermore, these arguments show that we may take 
\begin{gather*}
	D:= \left\{z\in X: \lambda_0 V(z) \leq C\right\} \cup \left\{z \in X: r(z) \leq a_2\right\},
\end{gather*}
and we get 
\begin{gather*}
	\left\{z \in X: r(z) \leq u_n\right\} \subset \left\{z \in X: r(z) \leq b_n\right\} \subseteq D_n 
	\quad\text{for $n$ large enough.}
\end{gather*}
The exhausting family $(A_n)_{n\in \nat}$ for pIUC constructed in Theorem \ref{th:pIUC} can be now approximated from below by finding the maximal $k=k(n)$ such that 
\begin{gather} \label{eq:superset}
	B_k \subseteq \left\{z \in X: r(z) \leq u_n\right\}. 
\end{gather}
This shows the size of the sets $D_n$ and $A_n$. With a more detailed analysis, one can find even better approximations of $b_n$. 

\subsubsection{Power-iterated-logarithmic-type profiles and pIUC}\label{subsec:power-log-log}
We take
\begin{gather*}
	h(r) = \log^{\gamma} \log r, \quad r>e, \quad\text{for some\ \ } \gamma >0.
\end{gather*}
It is convenient to assume that $r(x) > e^2$, 
e.g.\ 
$r(x):= e^2 R^{-2}(\alpha (x))$, for $x\in X$.
Arguing as in Section \ref{subsec:power-log}, we can show that the Feynman--Kac semigroups $\{\calU_n : n\in \nat_0\}$ and $\{\widehat{\calU}_n: n\in \nat_0\}$ are both pIUC, but not aIUC. In particular, we can find an approximation of the exhausting sequence $(A_n)_{n\in \nat}$ for pIUC from Theorem \ref{th:pIUC}. We indicate the main differences only, omitting all details. The condition \eqref{eq:log}, which defines the sets $E_n^{(i)}$, $i=1,2$, now reads as follows:
\begin{gather*}
	f_n(u) := \widetilde C^{1/\gamma}\,u^{\frac{1}{\gamma(n-1)}}-\log \log u\leq 0, \quad u>e,
\end{gather*}
with an appropriate constant $\widetilde C >0$, for $n \geq 2$ large enough. We solve
\begin{gather*}
	f_n'(u)  
	= \frac{\widetilde C^{1/\gamma}}{\gamma(n-1)}\,u^{\frac{1}{\gamma(n-1)}-1}-\frac{1}{u \log u} 
    =\frac{1}{u}\left(\frac{\widetilde C^{1/\gamma}}{\gamma(n-1)}\,u^{\frac{1}{\gamma(n-1)}}-\frac{1}{\log u}\right) = 0
\end{gather*}
for $u$, getting
\begin{gather*}
 u = u_n=a^{\gamma(n-1)},
\end{gather*} 
where $a >1$ is the unique real number such that $a \log a = \widetilde C^{-1/\gamma}$. In order to get a lower approximation for $(A_n)_{n\in \nat}$, we have to look for the maximal $k=k(n)$ such that 
\begin{gather} \label{eq:superset-loglog}
	B_k \subseteq \left\{z \in X: r(z) \leq a^{\gamma(n-1)}\right\}. 
\end{gather}

\subsubsection{Detailed justification for the entries of Table \ref{Table}} \label{sec:calc-ex} We will discuss each combination of the profiles $R$ and $W$ separately. 

\medskip 

\noindent
\textbf{Polynomial transition kernels:\ \boldmath$R(n)=n^{-\beta}$.\unboldmath} \\
First observe that one can take 
$r(x) = 2R^{-2}(\alpha (x))$, for $x\in X$,
in \eqref{eq:reg_P}. 

\smallskip 

\begin{description}
\item[\boldmath\underline{$W(n)=\log^{\rho}(1+n)$}\unboldmath]
We take $h(r) = \log^{\rho} r$ so that $V(x) \asymp h(r(x))$, $x \in X$. In fact, \eqref{eq:reg_pot} holds with $C_1=2^{-\rho}$, $C_2=1$.
Therefore, the general observations in Section \ref{subsec:power-log} show that we have pIUC, but aIUC fails to hold. We will characterize the approximation of the exhausting family $(A_n)_{n\in \nat}$ for pIUC which appears in Theorem \ref{th:pIUC}. Due to \eqref{eq:superset},  we have to solve the following inequality for $k$:
\begin{gather*}
	2k^{2\beta}\leq \left(\frac{(n-1)\rho}{\widetilde C^{1/\rho}}\right)^{(n-1)\rho},
\end{gather*}
where $\widetilde C = C_0C/(C_1\lambda_0)$, with $C$ being the constant from estimate \eqref{outer_estimate} in Theorem \ref{HK:estimates}. Consequently, the exhausting sequence $(B_{k(n)})_{n \in \nat}$ with
\begin{gather*}
	k(1) = 1, 
	\qquad 
	k(n)=\left\lfloor 2^{-\frac{1}{2\beta}}\left(\frac{(n-1)\rho}{\widetilde C^{1/\rho}}\right)^{\frac{(n-1)\rho}{2\beta}}\right\rfloor \vee 1, \quad n \geq 2,
\end{gather*}
is an approximation for the family $(A_n)_{n\in \nat}$. In particular, the Feynman--Kac semigroups $\{\calU_n : n\in \nat_0\}$ and $\{\widehat{\calU}_n: n\in \nat_0\}$ are both pIUC with respect to the exhausting sequence $(B_{k(n)})_{n \in \nat}$. 
 
Let us estimate the (quasi\nobreakdash-)ergodic rate appearing in Theorem \ref{pIUC_erg}. Take $A_n = B_{k(n)}$, $n \in \nat$, in Theorem \ref{pIUC_erg}. By \eqref{eq:V_W}, we get
\begin{gather*}
	\sup_{z\in A^c_m}\frac{1}{V(z)}
	= \sup_{z\in B_{k(m)}^c}\frac{1}{V(z)}
	= \frac{1}{W(k(m))} 
	\asymp \left(\frac{1}{m \log m}\right)^{\rho},
\end{gather*}
for $m$ large enough.  Since this function decays much slower than a geometric sequence, it determines the rate of convergence in Theorem \ref{pIUC_erg}.

\item[\boldmath\underline{$W(n)=n^\rho$}\unboldmath]
It is straightforward to see that the profile function $h$ takes the form $h(r)=r^{\frac{\rho}{2\beta}}$; therefore we have aIUC, as discussed in Section \ref{subsec:power-type}.

\item[\boldmath\underline{$W(n)=\exp(\rho n)$}\unboldmath]
Since we have $n^\rho \leq c \exp(\rho n)$, $n \in \nat$, for some constant $c=c(\rho)>0$, it follows from the previous case and Section \ref{subsec:power-type} that aIUC holds as well.
\end{description}

\medskip 

\noindent
\textbf{Exponential transition kernels:\ \boldmath$R(n)=e^{-\kappa n} n^{-\beta}$.\unboldmath} \\
One can take 
$r(x) = 2R^{-2}(\alpha (x))$, $x \in X$, in \eqref{eq:reg_P}. 

\smallskip 

\begin{description}
\item[\boldmath\underline{$W(n)=n^{\rho}$}\unboldmath]
By taking $h(r)=\log^{\rho} r$, we can show that \eqref{eq:reg_pot} holds with $C_1=(2\kappa+2\beta)^{-\rho}$, $C_2=(2\kappa)^{-\rho}$. 
This implies that the Feynman--Kac semigroups $\{\calU_n : n\in \nat_0\}$ and $\{\widehat{\calU}_n: n\in \nat_0\}$ are both pIUC, but not aIUC. The next step is to find $k(n)$, which is determined by \eqref{eq:superset}. Formally, we have to solve the following inequality for $k$:
\begin{gather*}
	2e^{2\kappa\,k}k^{2\beta} 
	\leq \left(\frac{(n-1)\rho}{\widetilde C^{1/\rho}}\right)^{(n-1)\rho},
\end{gather*}
where $\widetilde C = C_0C/(C_1\lambda_0)$, with $C$ being the constant from estimate \eqref{outer_estimate} in Theorem \ref{HK:estimates}.
Since we look only for a lower approximation for large $n$ and $k$, it is more convenient to analyse the inequality
\begin{gather*}
	e^{(2\kappa+\epsilon)k}
	\leq \left(\frac{(n-1)\rho}{\widetilde C^{1/\rho}}\right)^{(n-1)\rho}.
\end{gather*}
Solving this inequality for $k$, we see that the exhausting sequence $(B_{k(n)})_{n \in \nat}$ with
\begin{gather*}
	k(1) = 1, 
	\qquad 
	k(n)=\left\lfloor \frac{(n-1)\rho \, \log\left(\frac{(n-1)\rho}{\widetilde C^{1/\rho}}\right)}{2\kappa+\epsilon}\right\rfloor \vee 1, \quad n \geq 2,
\end{gather*}
can be chosen as a bound for the family $(A_n)_{n\in \nat}$ in Theorem \ref{th:pIUC}. In particular, the Feynman--Kac semigroups $\{\calU_n : n\in \nat_0\}$ and $\{\widehat{\calU}_n: n\in \nat_0\}$ are both pIUC with respect to the exhausting sequence $(B_{k(n)})_{n \in \nat}$. 

As in the corresponding case of polynomial transition kernels, we can see that the (quasi\nobreakdash-) ergodic rate in Theorem \ref{pIUC_erg} is
\begin{gather*}
	\sup_{z\in B_{k(m)}^c}\frac{1}{V(z)}=\frac{1}{W(k(m))} \asymp \left(\frac{1}{m \log m}\right)^{\rho}.
\end{gather*}

\item[\boldmath\underline{$W(n)=\log^{\rho}(1+n)$}\unboldmath] We take $h(r) = \log^{\rho} \log r$ and proceed in the same way as in the previous case for $W(n)=n^{\rho}$. Both Feynman--Kac semigroups are pIUC, but not aIUC. By using \eqref{eq:superset-loglog} we show that a bound for the family $(A_n)_{n\in \nat}$ in Theorem \ref{th:pIUC} is given by the sequence $(B_{k(n)})_{n \in \nat}$ with
\begin{gather*}
	k(1) = 1, 
	\qquad 
	k(n)=\left\lfloor \frac{(n-1)\rho\log a}{2\kappa+\epsilon}\right\rfloor \vee 1, \quad n \geq 2,
\end{gather*}
for a suitable $a>1$ and any $\epsilon \in (0,1)$. In particular, the Feynman--Kac semigroups $\{\calU_n : n\in \nat_0\}$ and $\{\widehat{\calU}_n: n\in \nat_0\}$ are both pIUC with respect to $(B_{k(n)})_{n \in \nat}$. The (quasi\nobreakdash-)ergodic rate in Theorem \ref{pIUC_erg} takes the form
\begin{gather*}
	\sup_{z\in B_{k(m)}^c}\frac{1}{V(z)}
	= \frac{1}{W(k(m))} 
	\asymp \left(\frac{1}{\log m}\right)^{\rho},
\end{gather*}
for $m$ large enough. 

\item[\boldmath\underline{$W(n)=\exp(\rho\,n)$}\unboldmath]
We will show that we are in the aIUC regime. We take the profile $\widetilde h(r) = r^{(\rho-\epsilon)/(2\kappa)}$, for some $0<\epsilon < \rho$, and use the conclusion of Section \ref{subsec:power-type}. Thus, we obtain that the potential $\widetilde V(x) \asymp \widetilde h(r(x))$, $x \in X$, with the profile $\widetilde W(n) = e^{(\rho-\epsilon) n} n^{\beta(\rho-\epsilon)/\kappa}$
as in \eqref{eq:V_W} guarantees aIUC. Since $\widetilde W(n) \leq c e^{\rho n} = W(n)$, $n \in \nat$, for some constant $c>0$, this is also true for the original potential. 
\end{description}

\bibliographystyle{abbrv}
\bibliography{discrete_f-k_v1}

\end{document}